\newtheorem{thm}{Theorem}[section]
\newtheorem{cor}[thm]{Corollary}
\newtheorem{lem}[thm]{Lemma}
\newtheorem{remark}[thm]{Remark}
\newtheorem{prop}[thm]{Proposition}
\newtheorem{defin}[thm]{Definition}
\newcommand {\ignore}[1]  {}
\newcommand{\df}{{\, \stackrel{\mathrm{def}}{=}\, }}
\newcommand{\sm}{\smallsetminus}
\newcommand{\hol}{\mathrm{hol}}
\newcommand{\M}{\mathcal{M}}
\newcommand{\C}{\mathbb{C}}
\newcommand{\Res}{{\mathrm{Res}}}
\newcommand{\g}{\mathfrak{g}}
\newcommand{\SL}{\operatorname{SL}}
\newcommand{\N}{\mathcal{N}}
\renewcommand{\H}{\mathcal{H}}
\newcommand{\Hm}{\mathcal{H}_{\mathrm{m}}}
\newcommand{\R}{\mathbb{R}}
\newcommand{\dev}{\mathrm{dev}}
\newcommand{\marked}{\H_{\mathrm{m}}}
\renewcommand{\v}{\vec{v}}
\newcommand{\Rel}{\mathrm{Rel}}
\renewcommand{\R}{\mathbb{R}}
\newcommand{\B}{B^{\perp}_{\N}}
\newcommand{\Mod}{\mathrm{Mod}}
\renewcommand{\SS}{\mathfrak{S}}
\newif\ifdraft\drafttrue
\title[Horocycle dynamics in rank one invariant subvarieties I]{Horocycle
 dynamics in rank one invariant subvarieties I: weak measure
  classification and equidistribution}
\author{Jon Chaika}
\address{University of Utah {\tt chaika@math.utah.edu }}
\author{Barak Weiss}
\address{Dept. of Mathematics, Tel Aviv University, Tel Aviv, Israel
{\tt barakw@post.tau.ac.il}}
\author{Florent Ygouf}
\address{Dept. of Mathematics, Tel Aviv University, Tel Aviv, Israel
{\tt florentygouf@mail.tau.ac.il}}
\begin{document}

\begin{abstract}
Let $\M$ be an invariant subvariety in the moduli space of translation surfaces. We contribute to the study of the dynamical properties of the horocycle flow on $\M$. In the context of dynamics on the moduli space of translation surfaces, we introduce the notion of a `weak classification of horocycle invariant measures' and we study its consequences. Among them, we prove genericity of orbits and related uniform equidistribution results, asymptotic equidistribution of sequences of pushed measures, and counting of saddle connection holonomies. As an example, we show that invariant varieties of rank one, Rel-dimension one and related spaces obtained by adding marked points satisfy the `weak classification of horocycle invariant measures'. Our results extend prior results obtained by Eskin-Masur-Schmoll, Eskin-Marklof-Morris, and Bainbridge-Smillie-Weiss. 
\end{abstract}

\maketitle
\tableofcontents

\section{Introduction} 
Any stratum of translation surfaces $\H$ is endowed with an action of the group $G \df \SL_2(\R)$, and the restriction of this action to the subgroup 
\begin{equation}\label{eq: besides}
    U = \{u_s: s \in \R\}, \ \ \ \text{ where } \ u_s \df \left( \begin{matrix} 1 & s \\ 0 & 1\end{matrix} \right),
\end{equation}
\noindent is called the {\em horocycle flow}. It is a longstanding open problem to understand the invariant measures for the $U$-action on $\H$. It has been known for a while, that a `sufficiently nice' classification of such measures would have many interesting dynamical and geometrical consequences. See \cite{Zorich_survey, HMSZ_problems}, and see \cite{chaika2020tremors} for a survey of recent results. This paper contributes to the investigation of this problem in two ways. Firstly, we obtain a  partial classification of horocycle invariant ergodic measures, which we call {\em weak classification}, in the setting of {\em rank-one invariant subvarieties of Rel-dimension one}, or spaces obtained from them by adding marked points. All of these notions will be defined in \S \ref{section: prelims}. Secondly, we explore the implications of a weak classification result in rank-one invariant subvarieties (of arbitrary Rel-dimension) and show that it implies genericity of orbits, equidistribution of circle averages, and many additional equidistribution results. 

We do not know whether the weak classification result holds in general rank-one invariant subvarieties; our approach crucially uses the Rel-dimension one assumption --- see Remark \ref{remark: where is this used}. Nevertheless, in a follow-up paper \cite{CWY2}, we extend the classification of $U$-orbit closures to general rank-one invariant subvarietes (omitting the assumption of one-dimensional Rel). We do so without obtaining a weak measure classification, but rather by an induction on the dimension of Rel. Although the current paper uses completely different techniques, it serves as the base of the induction used in \cite{CWY2}. 

In the remainder of this introduction we give an initial description of the main results and discuss the proofs. Precise statements of results require some preparations and will be given in the body of the text. Let $\M$ be an orbit-closure for the action of $G$. As was shown in a celebrated paper of Eskin, Mirzakhani and Mohammadi \cite{eskin2015isolation}, such orbit-closures are (iso-area subloci of) affine orbifolds in period coordinates, and carry a wealth of additional structure. Following \cite{ApisaWright}, we refer to these orbit closures as {\em invariant subvarieties}. A fundamental invariant of an invariant subvariety is its {\em rank}, introduced in \cite{Wright_cylinders}. We will be interested in the rank-one case, which is characterized by the fact that the $G$-action and the Rel foliation (see \S \ref{subsec: rel foliation}) are complementary foliations, in the sense that a neighborhood of a point can be described in terms of the $G$-action and the Rel-foliation. A further invariant is the {\em Rel dimension} (the complex dimension of leaves of the Rel foliation). Now suppose $\N \subset \M$ is an invariant subvariety. It was shown in \cite{eskin2018invariant} that $\N$ is the support of a unique $\SL_2(\R)$-invariant Radon probability measure, which has a geometric description in terms of the affine  structure of $\N$. We denote this measure by $m_\N$. It is well-known that $m_{\N}$ is ergodic for the action of $U$ and, as first observed by Calta \cite{Calta} and explained in \S \ref{push}, such a measure can be pushed by real Rel to create more $U$-invariant ergodic measures. We say that $\M$ satisfies the {\em weak classification of $U$-invariant measures} if any ergodic $U$-invariant probability Radon measure measure on $\M$, that assigns measure zero to  the set of surfaces with horizontal saddle connections, is obtained by this construction. In Theorem \ref{classification} we prove that  invariant subvarieties of rank one and with Rel-dimension one satisfy weak classification. We remark that all invariant subvarieties modeled on a complex vector space of dimension 3 are of rank one with Rel-dimension one and thus satisfy the hypotheses of Theorem \ref{classification}; explicit examples are given in Corollaries \ref{cor: known examples}, \ref{cor: more known examples} and \ref{cor: more more known examples}. 

Theorem \ref{preequidistribution} gives severe restrictions on the collection of weak-* limits of measures obtained by averaging along a $U$-orbit in a rank-one invariant subvariety. This theorem uses the rank-one structure and is false in general, as shown in \cite{chaika2020tremors}. 
We then explore the implications of weak classification in the setting of rank-one invariant subvarieties (omitting the assumption that the Rel-dimension is one). Under these conditions,  we can derive the following results. In Theorem
\ref{genericity} we prove that the $U$-orbit of any surface without horizontal saddle connections is generic for some $U$-ergodic invariant measure. Moreover, for a given test function $f$ and a given error $\varepsilon$, genericity holds uniformly outside certain compact `singular sets', see Theorem \ref{uniform}. This result is an analogue of \cite[Thm. 3]{DaniMargulis}. We show in Theorem \ref{geodesicpush} that for any $U$-invariant ergodic measure $\mu$, the pushforwards  $g_{t\ast} \mu$ converge to $m_\N$ for some invariant subvariety $\N$, as $t \to \infty$ and to $m_{\N'}$ for some invariant subvariety $\N' \subset \N$, as $t \to -\infty$ (unless $\mu$ assigns positive measure to the set of surfaces with horizontal saddle connections, in which case they diverge in the space of probability measures). Here 
\begin{align}\label{eq: geodesic flow}
    g_t \df \mathrm{diag}(e^t, e^{-t})
\end{align}
\noindent denotes the geodesic flow. In Theorem \ref{pushbyrel} we obtain a similar result for pushing a $G$-invariant measure by a real Rel flow. Finally we show in Theorem \ref{thm: circle averages} that in a rank one locus satisfying the weak classification of $U$-invariant measures, pushed circle averages $\frac{1}{2\pi} \int_0^{2\pi} g_{t \ast} r_\theta \delta_q $ converge as $t \to \infty$, to the $G$-invariant measure on $\N = \overline{Gq}$. Here $r_\theta$ is the rotation matrix by angle $\theta$. As a consequence, every surface satisfies quadratic growth of saddle connection holonomies. Our technique revisits and adapts arguments of Ratner \cite{Ratner} for proving the measure classification of unipotent flows on homogeneous spaces, and subsequent work of Dani and Margulis \cite{DaniMargulis} for harnessing this in order to obtain equidistribution results. The first papers in which these ideas were adapted to translation surfaces are \cite{EMSch, EMM1}, in which a weak classification of $U$-invariant measures was proved, and equidistribution for circle averages was deduced, in the context of branched covers of Veech surfaces. 
In \cite{CaltaWortman} and \cite{bainbridge2016horocycle}, measure classification results were obtained in eigenform loci in genus two, which are particular cases of a rank-one invariant subvariety with Rel-dimension one. Specifically, the paper \cite{bainbridge2016horocycle} proves a complete measure classification result, which is stronger than our `weak classification', as it also classifies measures for which typical surfaces do have horizontal saddle connections. From this strong measure classification, equidistribution results are deduced, namely a strong version of Theorem \ref{genericity}, and analogues of Theorems \ref{geodesicpush}, \ref{pushbyrel} and \ref{thm: circle averages}. Our work adapts the techniques in \cite{bainbridge2016horocycle} to our more general setting  and establishes that most of these equidistribution results, and additional ones, can be proved assuming a weaker measure classification result. A key technical input is Proposition \ref{frequencyofpassage}, which improves \cite[Proof of Thm. 11.1, Step 3]{bainbridge2016horocycle} and uniformly bounds the time horocycle trajectories starting in a given compact set can spend close to certain singular sets. 

\subsection{Acknowledgements} The authors gratefully acknowledge
the support of grants BSF 2016256,  ISF 2019/19, ISF-NSFC 3739/21,  a Warnock chair, a Simon Fellowship, 
and NSF grants DMS-1452762 and DMS-2055354.  

\section{Strata of translation surfaces}\label{section: prelims}
In this section we collect some standard definitions and set our
notations. See \cite{MT, Zorich_survey, 
  Forni_Matheus_survey, Wright_survey, bainbridge2016horocycle} for
helpful surveys and additional 
introductory material.

\subsection{Translation surfaces and developing map}
Let $S$ be a compact and oriented topological surface of genus $g$,
let $\Sigma$ be a finite subset of $S$ and let $\kappa = (k_\xi)_{\xi \in \Sigma}$ be an integer partition of $2g-2$. That is, $\sum_{\xi \in \Sigma} k_\xi = 2g-2$ and $k_\xi$ are non-negative integers. We emphasize that $\kappa_\xi = 0$ is allowed, in which case $\xi$ is called a {\em marked point}. A {\em singularity-labeled translation surface of type $\kappa$} is a triple $q = (X_q,\omega_q,\mathfrak{z}_q)$ where $X_q$ is a compact and connected Riemann surface of genus $g$, $\omega_q$ is a non-zero holomorphic $1$-form on $X_q$ whose set of zeroes is denoted by $\Sigma_q$ and $\mathfrak{z}_q : \Sigma \to X_q$ is an injective map whose image contains $\Sigma_q$ and such that $\mathfrak{z}_q(\xi)$ has order $k_\xi$. Where it will not cause any confusion we will omit the words `singularity-labeled' and refer to $q$ simply as a {\em translation surface of type $\kappa$}.  A {\em
  marking} on a translation surface $q$ is a homeomorphism $\varphi: S \to X_q$ such that $\varphi(\xi) = \mathfrak{z}_q(\xi)$ for $\xi \in \Sigma$, and a pair $(q, \varphi)$
is called a {\em marked translation surface with labeled singularities}. Two marked translation surfaces $(q_1,\varphi_1)$ and $(q_2,\varphi_2)$ are isomorphic if there is a biholomorphism $\phi : X_{q_1} \to X_{q_2}$
such that $\phi^{\ast} \omega_{q_2} = \omega_{q_1}$, $\phi \circ
\mathfrak{z}_{q_1} = \mathfrak{z}_{q_2}$ and $\varphi_2^{-1} \circ
\phi \circ \varphi_{1}$ is isotopic to the identity rel $\Sigma$. Let
$\H_{\mathrm{m}}$ be the space of isomorphism classes of marked translation surfaces of type $\kappa$ with labeled singularities and let $\mathrm{Mod}(S,\Sigma)$
be the mapping class group of $S$ that preserves $\Sigma$
point-wise. It acts by precomposition on the space $\H_{\mathrm{m}}$ and we
denote by $\H$ the quotient and by $\pi: \H_{\mathrm{m}} \to \H$ the
quotient map. The space $\H = \H(\kappa)$ is the
{\em moduli space of translation surfaces of type $\kappa$ with
  labeled singularities}. There is a structure of linear
manifold on $\H_{\mathrm{m}}$ that turns the following map into a
local linear isomorphism. 
$$
\dev: 
\H_{\mathrm{m}}  \to  H^1(S,\Sigma;\mathbb{R}^2), \ \ \
\dev\left((q,\varphi)  \right) \df  \left(\gamma \mapsto
  \int_{\varphi(\gamma)} \omega_q \right).
$$

Each homeomorphism of $S$ preserving $\Sigma$ acts on homology and
this induces an action of 
the group $\Mod(S,\Sigma)$ on $H^1(S,\Sigma;\mathbb{R}^2)$. 
For this action, the map $\dev$ is
$\Mod(S,\Sigma)$-equivariant. The space $\H$ can be endowed
with the quotient topology and inherits an orbifold structure from the
one on $\H_{\mathrm{m}}$.

The {\em area} of a translation surface is the integral of the volume
form induced by the 1-form, i.e.
$$
\mathrm{Area}(q) = \frac{1}{2\mathbf{i}} \int_S \omega_q \wedge
\overline{\omega}_q. 
$$
We denote the subset of area-one surfaces in $\H_{\mathrm{m}}$ and
$\H$ by $\H_{\mathrm{m}}^{(1)}$ and $\H^{(1)}$ respectively.

Associated with a translation surface is a translation atlas on $X_q$,
obtained by locally integrating $\omega_q$. 
A {\em saddle connection} on a translation surface $q$ is a path
$\gamma: [0,1] \to X_q$ such that $\gamma(0), \gamma(1) \in \Sigma_q,
\gamma(t) \notin \Sigma_q$ for $t \in (0,1)$, and $t \mapsto
\gamma(t) $ is linear in planar charts. We refer to the vector
$\int_\gamma \omega_q \in \C \cong \R^2$ as the {\em holonomy} of $\gamma$
with respect to $q$, and denote it by $\hol_q(\gamma)$. 

\subsection{$G$-action, invariant subvarieties}
For the remainder of this paper we set $G \df \SL_2(\R)$. 
The space $\H_{\mathrm{m}}$ is endowed with a $G$-action, for which the developing map is equivariant with respect to the natural 
action of $G$ by post-composition on $H^{1}(S,\Sigma;\R^2)$. More precisely, for any  $(q,\varphi)$ and any $g \in G$, we have

\begin{equation*}
    \dev( g (q,\varphi) ) = g \dev(q,\varphi),
\end{equation*}
and thus, for any saddle connection $\delta$ on $q$ and any $g \in G$,
\begin{equation}\label{eq: saddle connection equivariance}
    g \hol_{q}(\delta) = \hol_{gq}(\delta).
\end{equation}
The $G$-action preserves the area of a translation surface. Besides the group $U = \{u_s : s \in \R\}$ defined in \eqref{eq: besides}, we will consider 
\begin{equation}\label{eq: some matrices}
    A \df \{g_t : t \in \R\} \text{ and } B \df AU,
\end{equation}
where $g_t$ is as in \eqref{eq: geodesic flow}. The action of $A$ will be referred to as the {\em geodesic flow.}

\begin{defin}[Invariant subvariety]\label{def: invariant subvariety}
An invariant subvariety is a subset $\M \subset \H$ such that $\pi^{-1}(\M)$ is a locally finite union of connected equilinear manifolds, each of which projects to $\M$ under $\pi$.
\end{defin}

We recall from \cite{SSWY} that an equilinear manifold is a properly embedded connected submanifold $X \subset \H_{\mathrm{m}}$ that is modeled via the developing map on a $\C$-linear subspace $V \subset H^1(S,\Sigma;\C) \cong H^1(S, \Sigma; \R^2)$ defined over $\R$, and such that if $\gamma \in \Mod(S,\Sigma)$ preserves $X$, then $\gamma|_{V}$ has determinant $1$. By the work of Eskin, Mirzakhani and Mohammadi (\cite{eskin2018invariant} and \cite{eskin2015isolation}), the orbit-closures for the $\mathrm{GL}_2^+(\R)$-action are exactly the invariant subvarieties, closed invariant sets are always finite unions of invariant subvarieties, and the orbit-closures and invariant measures for $G$ coincide with the orbit-closures and invariant measures for its subgroup $P$. What we call `invariant subvariety' was called `affine invariant manifold' in \cite{eskin2015isolation}; our choice follows \cite{ApisaWright}. Note also that Definition \ref{def: invariant subvariety} differs slightly from the definitions given in \cite{eskin2015isolation} but will be more convenient for us, see \cite{SSWY} for a more detailed discussion of this definition. 

Let $\M$ be an invariant subvariety. A connected equilinear manifold $X$ contained in $\pi^{-1}(\M)$ such that $\pi(X) = \M$ is called an {\em irreducible component of $\pi^{-1}(\M)$}. It is easy to see that $\pi^{-1}(\M)$ is the union of its irreducible components and that $\Mod(S,\Sigma)$ acts transitively on the set of irreducible components. In particular, since the action of $\Mod(S,\Sigma)$ is smooth, the irreducible components all have the same dimension. This allows us to define the dimension of $\M$, which we denote by $\dim(\M)$, as the dimension of any one of the irreducible components. 

Let $\M^{(1)}$ be the area-one locus in $\M$. Using Lebesgue measures on the irreducible components of $\pi^{-1}(\M)$ and a `cone construction', one can construct a natural smooth $G$-invariant Radon probability measure of full support on $\M^{(1)}$. For more details on this construction, using the language above, see \cite{SSWY}. We will denote this measure by $m_\M$. It was shown in \cite{eskin2018invariant} that all $G$-invariant ergodic measures on strata arise in this way, and that $m_\M$ is the unique $G$-invariant ergodic measure with $\mathrm{supp} \, m_\M = \M^{(1)}$. 

\subsection{Rel foliation, Rel vector fields} \label{subsec: rel foliation}
We recall the following construction (see \cite{Zorich_survey, MW2, McMullen_isoperiodic, bainbridge2016horocycle}). Let $\Res : H^1(S,\Sigma;\R^2) \to H^1(S; \R^2)$ be the canonical cocycle restriction map and denote its kernel by $\mathfrak{R}$. Clearly $\mathfrak{R}$ is a $\C$-linear subspace defined over $\R$. It will be useful to have an explicit set of generators for $\mathfrak{R}.$
For any 
$\xi \in \Sigma$, denote by $\xi^*$ the element of $H^1(S,\Sigma;\R^2)$ that is dual (with respect to the intersection form) to the cycle of $H_1(S \sm \Sigma;\mathbb{Z})$ represented by a contractible loop on $S$ that circles counterclockwise the point $\xi$. The elements $\{\xi^* : \xi \in \Sigma\}$ are a set of generators for $\mathfrak{R}$, and they satisfy one relation $\sum_{\xi \in \Sigma} \xi^* =0.$ In particular $\dim_{\C}(\mathfrak{R}) = n-1$, where $n$ is the cardinality of $\Sigma$.  

There is a foliation of $H^1(S,\Sigma;\R^2)$ by translates of $\mathfrak{R}$ which can be pulled back by the developing map to give a foliation of $\H_{\mathrm{m}}$. Since the map $\dev \circ \Res$ is $\Mod(S,\Sigma)$-equivariant, there is an induced foliation of $\H$. The area of two translation surfaces which are in the same leaf for this foliation are the same and thus the foliation restricts to give a foliation of $\H^{(1)}$. We call this foliation (on any one of the spaces $\H_{\mathrm{m}}, \H,$ or $\H^{(1)}$) the {\em Rel foliation}; it has also been called the kernel foliation, the isoperiodic foliation, and the absolute period foliation. 

Recall our convention that the singularities of surfaces in $\H$ are labeled. This implies that elements of 
$\Mod(S,\Sigma)$ do not permute the singularities and act trivially on $\mathfrak{R}.$
For any $v \in \mathfrak{R}$, the constant vector field  $v$ can be lifted to a vector field on $\marked$ by $\dev$ and since singularities are not permuted by $\Mod(S, \Sigma)$, this vector field descends to a well-defined vector field on $\H$, which is called the Rel vector field associated to $v$ and denoted $\v$.

\begin{defin}
Let $q \in \H$ and let $v \in \mathfrak{R}$. We say that $\Rel_v(q)$ is well-defined and equal to $q'$ if there is a smooth path $\gamma: I \to \H$, where $I$ is an interval that contains $0$ and $1$, and such that $\gamma(0) = q$, $\gamma(1)=q'$ and $\forall t \in I, \ \frac{d}{dt}\gamma(t) = \v$. We denote by  $\Delta$ the subset of $ \H \times \mathfrak{R}$ consisting of pairs $(q,v)$ for which $\Rel_v(q)$ is well-defined.  
\end{defin}

We summarize some properties of this partially defined map, see \cite[\S4]{bainbridge2016horocycle} for proofs.

The set $\Delta$ is open and the map
\begin{align*}
    \Delta \to\H,  \ \ \ \ \ (q,v) \mapsto \Rel_v(q)
\end{align*}
is continuous. The group $G$ acts on $\mathfrak{R}$ by restriction of its action by post-composition on $H^1(S,\Sigma;\C)$, and for any $g \in G$, 
\begin{equation}\label{eq: equivariance Rel}
    (q, v) \in \Delta \iff (gq, gv) \in \Delta, \ \ \text{ and } \ 
    \Rel_{gv}(gq) = g \Rel_v(q).
\end{equation}
The set $\Delta$ is completely described in \cite{MW2, bainbridge2016horocycle} in terms of saddle connections. In particular, if $n=2$ and $q$ has no saddle connection whose holonomy is parallel to $v$, then $(q,v) \in \Delta$.

Let $\M \subset \H$ be an invariant subvariety, let $X$ be an irreducible component of $\pi^{-1}(\M)$ and let $V$ be the $\C$-linear subspace of $H^1(S,\Sigma;\C)$ defined over $\R$, on which $X$ is modeled. We define 
$$
\mathfrak{R}_X \df \mathfrak{R} \cap V.
$$
Since $\Mod(S,\Sigma)$ acts transitively on the set of irreducible components of $\pi^{-1}(\M)$ and $\Mod(S,\Sigma)$ acts trivially on $\mathfrak{R}$, it is easy to see that if $Y$ is another irreducible component then $\mathfrak{R}_X = \mathfrak{R}_Y$. We thus denote by $\mathfrak{R}_{\M}$ this common subspace. By construction, for any $q \in \M$ and $v \in \mathfrak{R}_{\M}$ we have $\Rel_v(q) \in \M$ whenever the latter is well-defined, and $\mathfrak{R}_{\M}$ is the biggest linear subspace of $\mathfrak{R}$ with this property. Whenever $\N \subset \M$ is an inclusion of invariant subvarieties, we have $\mathfrak{R}_\N \subset \mathfrak{R}_\M.$
 We refer to the number $\dim_{\C}(\mathfrak{R}_\M)$ as the {\em Rel dimension of $\M$.} 

Let $V \subset H^1(S, \Sigma; \C)$ be the local model of an irreducible component $X$ of $\pi^{-1}(\M)$. It follows from \cite[Theorem 1.4]{avila2017symplectic} that $\Res(V)$ is a symplectic subspace of $H^1(S; \C)$ with respect to the symplectic structure induced by the cup product, and in particular, that $\dim_{\C} (\Res(V))$ is even. In particular this implies that for any affine subvariety $\M$, $\dim_{\C}(\M) - \dim_{\C}(\mathfrak{R}_{\M})$ is an even number.

\begin{defin}[Rank]\label{def: rank}
The {\em rank} of $\M$ is defined to be 
$\frac12 \left(\dim_{\C}(\M) - \dim_{\C}(\mathfrak{R}_{\M})\right)$. 
\end{defin}

The rank is a fundamental invariant of an invariant subvariety. See \cite{Wright_cylinders} for additional characterizations. In this paper we will be particularly interested in the case $\mathrm{rank}(\M)=1$. A simple dimension computation implies that in this case, the Rel leaves and the foliation into $G$-orbits are complementary foliations on $\M^{(1)}$; that is, in the rank one case, for any $q \in \M$ there are neighborhoods $\mathcal{U}$ in $\M^{(1)}$ and neighborhoods $\mathcal{V}_1, \mathcal{V}_2$ of the identity in $G$ and of $0$ in $\mathfrak{R}$, such that the map 
$$
\mathcal{V}_1 \times \mathcal{V}_2 \to \mathcal{U},  \ \ \ \ \ (g,v)
\mapsto \Rel_v(gq)
$$
\noindent is a homeomorphism. It follows from the discussion preceding Definition \ref{def: rank} that if $\dim_{\C}(\M)=2$ then necessarily $\M^{(1)}$ is a closed $G$-orbit, and that being a rank-one invariant subvariety of Rel-dimension one is equivalent to the condition $\dim_{\C}(\M)=3$.

For fixed $v \in \mathfrak{R}$, we denote by $\H'_v$ the set of surfaces $q \in \H$ such that $(q,v) \in \Delta$; and for any $v \in \mathfrak{R}_{\M}$, we denote by $\M'_v$ the set of surfaces $q \in \M$ such that $(q,v) \in \Delta$.

\subsection{Real Rel vector fields and real Rel flow}
Following \cite{bainbridge2016horocycle}, we write $\R^2 = \R_x \oplus \R_y$ for the canonical splitting into $x$ and $y$ coordinates. This induces the splitting
\begin{align}\label{eq: splitting}
    H^1(S,\Sigma;\R^2) = H^1(S,\Sigma;\R_x) \oplus H^1(S,\Sigma;\R_y),
\end{align}
\noindent where $H^1(S, \Sigma; \R_x) \cong H^1(S, \Sigma; \R) \cong H^1(S, \Sigma; \R_y)$.  The subspace $H^1(S,\Sigma;\R_x)$ shall be referred to as the \textit{horizontal space}. If $v \in H^1(S,\Sigma;\R_x)$, it will be convenient to denote by $\mathbf{i} \cdot v$ the corresponding vector in $H^1(S,\Sigma;\R_y)$; that is, with respect to the splitting \eqref{eq: splitting}, $(x,0)$ projects to $x \in H^1(S,\Sigma;\R_x)$ and $(0, x)$ projects to $\mathbf{i} \cdot x \in H^1(S,\Sigma;\R_y)$. We denote
$$
Z \df \mathfrak{R} \cap
H^1(S,\Sigma;\R_x) \ \ \ \text{ and } Z_{\M} \df \mathfrak{R}_{\M}
\cap  H^1(S,\Sigma;\R_x) 
$$
\noindent where  $\M \subset \H$ is an invariant subvariety. If $v \in Z$, the vector field $\v$ is called a {\em real Rel vector field}. For any $r>0$, we denote by $\M_r$ the set of surfaces in $\M$ which do not have a horizontal saddle connection strictly shorter than $r$, and by $\M_{\infty}$ the set of surfaces in $\M$ that  do not have horizontal saddle connections. With respect to the  action of $G$ by post-composition on $\mathfrak{R}$, the space  $\mathfrak{R}_{\M}$ is invariant, and the subgroup $B \subset G$ defined in \eqref{eq: some matrices} leaves the subspace $Z_{\M}$ invariant. We can use this action to define semi-direct products, as follows

\begin{defin}

Let $\M \subset \H$ be an invariant subvariety. We define
\begin{align*}
    N_{\M} \df B \ltimes Z_{\M} \ \text{ and } \ L_{\M} \df G \ltimes
  \mathfrak{R}_{\M}. 
\end{align*}

\end{defin}

The set $\M_\infty$ is a dense $G_\delta$ subset of $\M$. Since $B$ preserves horizontal saddle connections and real Rel is defined on surfaces without horizontal saddle connections, the map
$$
N_\M \times \M_\infty \ni (b,z,q) \mapsto \Rel_z(bq)
$$

\noindent is well-defined. Moreover, it satisfies the axioms of a group action (see \cite{MW2}). In particular, for $z \in Z_\M$, the map $(t,q) \mapsto \Rel_{tz}(q)$ defines a {\em real Rel flow} on $\M_\infty$, and this flow commutes with the horocycle flow. On the other hand, the same map defined on the larger set $L_\M \times \M$ need not  satisfy the axioms of a group action, even if well-defined. In order to stress this point, for general $q \in \M$ and $(g,v) \in L_{\M}$ for which $\Rel_{v}(g q)$ is well-defined,  we will write $g q \pluscirc v \df \Rel_{v}(g q).$ 

\subsection{Real Rel pushes of ergodic $U$-invariant measures}\label{push}

For fixed $q \in \H$, let $Z^{(q)}$ denote the set of $z \in Z$ for which $\Rel_z(q)$ is well-defined. By \eqref{eq: equivariance Rel}, $Z^{(q)} = Z^{(uq)}$ for any  $u \in U$ and $q \in \H$, and hence, for any ergodic $U$-invariant probability measure $\mu$ on $\H$,  there is a set $Z^{(\mu)} \subset Z$ such that  $Z^{(q)} = Z^{(\mu)}$ for $\mu$-a.e. $q \in \H$. Let $z \in Z^{(\mu)}$. Denoting by $\mathcal{B}(\M)$ the Borel $\sigma$-algebra on $\M$, we define a measure by
\begin{align}
    \forall X \in \mathcal{B}(\M), \ \ \ {\Rel_z}_{\ast}\mu(X) = \mu(\Rel_{-z}(\H_{-z}' \cap X)). 
\end{align}

The measure ${\Rel_{z}}_{\ast}\mu$ is also $U$-invariant and ergodic. In particular, let $\M \subset \H$ be an invariant subvariety. If $\mu$ is an ergodic $U$-invariant measure supported on $\M$ and $z \in Z^{(\mu)} \cap Z_{\M}$, then ${\Rel_{z}}_{\ast}\mu$ is also an ergodic $U$-invariant measure supported on $\M$. We finally denote
\begin{align}\label{eq: definition of the normalizer}
    N_{\mu} \df \left\{(b,z) \in N_\M :  z \in Z^{(b_{\ast}\mu)} \ \text{and} \ {\Rel_z}_{\ast}( b_{\ast}\mu) = \mu \right\}. 
\end{align}

It follows from \cite[Cor. 4.20]{bainbridge2016horocycle} that $N_{\mu}$ is a closed subgroup of $N_\M$, and thus $N^\circ_\mu$, the connected component of the identity in $N_{\mu}$, is a connected Lie group. We recall that a point $q  \in \H$ is generic for an ergodic $U$-invariant measure $\mu$ if for any compactly supported continuous function $\varphi : \M \to \R$, 
\begin{align*}
    \frac{1}{T} \int_0^T f(u_s  q) \, ds \underset{T \to \infty}{\longrightarrow} \int_{\H} f \ d\mu. 
\end{align*}

The set of generic points of $\mu$, which we denote by $\Omega_{\mu}$, has full measure for $\mu$. We will need the following: 

\begin{prop}[\cite{bainbridge2016horocycle}, Cor. 4.24, Prop. 4.27 \& Cor. 4.28]\label{prop: first}  
Let $\mu$ be an ergodic $U$-invariant measure.
Then:
\begin{itemize}
\item$Z^{(\mu)} \subset Z^{(q)}$ for any $q \in \Omega_{\mu}$. 
\item
If $q,q' \in \Omega_{\mu}$ and $n = (b,z) \in N_\M$ satisfy that $z \in Z^{(b_{\ast}\mu)}$ and $q' = \Rel_z(bq)$, then $n \in N_{\mu}$.  
\item
If $n =(b,z) \in N_\M$ with $z \in Z^{(\mu)}$, and $q \in \Omega_\mu$, then $\Rel_z(bq) \in \Omega_{\mu'}$ for $\mu' = \Rel_{z\ast} (b_\ast \mu)$.  
\end{itemize}
\end{prop}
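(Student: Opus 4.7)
The plan is to prove the three statements in the order first, third, second. Two core facts are exploited throughout: horizontal cohomology classes are fixed by $U$, so by \eqref{eq: equivariance Rel} the real Rel flow commutes with $U$ on its domain; and $B$ normalizes $U$ with a positive rescaling $b^{-1} u_s b = u_{s \lambda(b)}$ (with $\lambda(b) = e^{-2t}$ if $b = g_t u_r$).

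For the first statement, fix $z \in Z^{(\mu)}$ and note that $\H'_z$ is open (openness of $\Delta$) and $U$-invariant. Because $\mu(\H'_z) = 1$, one can pick a compactly supported continuous $f \geq 0$ with $\mathrm{supp}(f) \subset \H'_z$ and $\int f\,d\mu > 0$. If some $q \in \Omega_\mu$ lay outside $\H'_z$, the $U$-invariance would force $\{u_s q : s \in \R\}$ to avoid $\mathrm{supp}(f)$, so all time averages of $f$ along $q$ would vanish, contradicting the genericity requirement $\frac{1}{T}\int_0^T f(u_s q)\,ds \to \int f\,d\mu > 0$.

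For the third statement, I combine two transfer assertions. First, $q \in \Omega_\mu$ implies $bq \in \Omega_{b_*\mu}$: writing $u_s(bq) = b\,u_{s\lambda(b)}q$ and substituting $s' = s\lambda(b)$ converts the ergodic average of $f \in C_c(\H)$ along $bq$ into that of $q \mapsto f(bq)$ along $q$, which converges to $\int f\,d(b_*\mu)$ by genericity. Second, if $q'' \in \Omega_{\mu''}$ and $z \in Z^{(\mu'')}$, the first statement gives $q'' \in \H'_z$, hence $\Rel_z(u_s q'') = u_s \Rel_z(q'')$ for all $s$, and the ergodic average of $f$ along $\Rel_z(q'')$ equals that of $F$ along $q''$, where $F$ equals $f \circ \Rel_z$ on $\H'_z$ and vanishes elsewhere. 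Since $\Rel_z$ is a homeomorphism $\H'_z \to \H'_{-z}$ with inverse $\Rel_{-z}$, the support of $F$ lies in the compact set $\Rel_{-z}(\mathrm{supp}(f) \cap \H'_{-z})$, so $F \in C_c(\H)$; genericity of $q''$ then gives convergence to $\int F\,d\mu'' = \int f\,d(\Rel_{z*}\mu'')$. Chaining the two steps yields $\Rel_z(bq) \in \Omega_{\Rel_{z*}(b_*\mu)}$.

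For the second statement, applying the third to $q \in \Omega_\mu$ with the given $n = (b,z)$ yields $q' = \Rel_z(bq) \in \Omega_{\Rel_{z*}(b_*\mu)}$. By hypothesis $q' \in \Omega_\mu$ as well, but a single point can be generic for at most one ergodic $U$-invariant probability measure (time averages against every $f \in C_c(\H)$ determine the measure by Riesz representation). Therefore $\Rel_{z*}(b_*\mu) = \mu$, i.e., $(b,z) \in N_\mu$. The main technical point is checking that $F$ extends continuously by zero across $\partial \H'_z$: any sequence in $\mathrm{supp}(F)$ converging to a point outside $\H'_z$ would, by compactness of $\Rel_{-z}(\mathrm{supp}(f) \cap \H'_{-z})$, have a subsequential limit inside $\H'_z$, a contradiction.
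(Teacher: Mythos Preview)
The paper does not give its own proof of this proposition; it is imported verbatim from \cite{bainbridge2016horocycle}. Your outline --- first bullet via $U$-invariance of $\H'_z$, then the $B$-normalization argument, then the $\Rel_z$-pushforward of generic points, and finally deducing the middle bullet from uniqueness of limits --- is the natural one and matches the arguments in that reference. The proofs of the first bullet and of the assertion $q\in\Omega_\mu \Rightarrow bq\in\Omega_{b_*\mu}$ are correct.

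There is, however, a genuine gap in your treatment of the $\Rel_z$-step. You assert that $\Rel_{-z}(\mathrm{supp}(f)\cap\H'_{-z})$ is compact, and then use this both to conclude $F\in C_c(\H)$ and to check continuity of the zero extension. This set is \emph{not} compact in general: $\mathrm{supp}(f)\cap\H'_{-z}$ is only closed in the open set $\H'_{-z}$, and whenever $\mathrm{supp}(f)$ meets $\partial\H'_{-z}$ one can take $q_n\in\mathrm{supp}(f)\cap\H'_{-z}$ converging to a boundary point; the preimages $p_n=\Rel_{-z}(q_n)$ then have a horizontal saddle connection whose length tends to $0$, so $p_n$ leaves every compact subset of $\H$. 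In that situation $F$ is not compactly supported and your appeal to genericity of $q''$ fails.

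The repair is an approximation step rather than a direct compactness claim. Given $f\in C_c(\H)$ with $\|f\|_\infty\le 1$ and $\varepsilon>0$, use $\mu''(\H'_z)=1$ and inner regularity to choose a compact $K''\subset\H'_z$ with $\mu''(K'')>1-\varepsilon$, and a relatively compact open $U''$ with $K''\subset U''\subset\overline{U''}\subset\H'_z$. Applying genericity of $q''$ to an Urysohn function between $K''$ and $U''$ shows that for all large $T$ the set $\{s\in[0,T]:u_sq''\notin \overline{U''}\}$ has measure $<\varepsilon T$. Now take $\psi\in C_c(\H'_z)$ with $0\le\psi\le 1$ and $\psi\equiv 1$ on $\overline{U''}$, and set $F_1\df (f\circ\Rel_z)\cdot\psi$; since $\mathrm{supp}(\psi)$ is a compact subset of $\H'_z$, one has $F_1\in C_c(\H)$ honestly. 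On the one hand $|A_T(f,\Rel_z(q''))-A_T(F_1,q'')|\le 2\varepsilon$ because the integrands agree whenever $u_sq''\in\overline{U''}$; on the other hand $A_T(F_1,q'')\to\int F_1\,d\mu''$ by genericity, and $|\int F_1\,d\mu''-\int f\,d(\Rel_{z*}\mu'')|\le\mu''((K'')^c)<\varepsilon$. Letting $\varepsilon\to 0$ gives the claim. (A minor related point: your chaining actually needs $z\in Z^{(b_*\mu)}$, not $z\in Z^{(\mu)}$ as written in the third bullet; this is exactly the hypothesis appearing in the second bullet, so the deduction of the second bullet is unaffected.)
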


We will also need the following result, which is proved in \cite[\S 6]{CaltaWortman} for eigenform loci in genus 2, and is a special case of a general result proved in \cite{SSWY}: 

\begin{prop}\label{prop: from SSWY}
Let $\M$ be a rank-one invariant subvariety and let $\mu$ be a $U$-invariant ergodic probability measure on $\M$. Suppose that $\mu$ assigns zero measure to surfaces with horizontal saddle connections, and that $\Rel_{x\ast} \mu = \mu$ for every  $x \in Z_\M$. Then $\mu = m_\M$.  
\end{prop}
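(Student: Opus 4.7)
My plan is to upgrade the hypotheses on $\mu$ to full $G$-invariance, and then invoke the uniqueness of the $G$-invariant ergodic probability measure on an invariant subvariety to conclude that $\mu = m_\M$.

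To set things up, I would work in local period coordinates near a $\mu$-generic point $q_0 \in \Omega_\mu \cap \M_\infty$. Using the rank-one hypothesis, the map $(g, v) \mapsto \Rel_v(g q_0)$ is a local homeomorphism from a neighborhood of $(\mathrm{id}, 0)$ in $G \times \mathfrak{R}_\M$ onto a neighborhood of $q_0$ in $\M^{(1)}$. Writing $v = \alpha + \mathbf{i}\beta$ with $\alpha, \beta \in Z_\M$ under the splitting $\mathfrak{R}_\M = Z_\M \oplus \mathbf{i} Z_\M$, the real Rel flow acts by translation $(g, \alpha, \beta) \mapsto (g, \alpha + z, \beta)$ for $z \in Z_\M$, while the $U$-action reads $(g, \alpha, \beta) \mapsto (u_s g, \alpha + s\beta, \beta)$. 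The $Z_\M$-invariance of $\mu$ gives a disintegration $d\mu = d\alpha\, d\bar\mu(g, \beta)$ (locally) for a Radon measure $\bar\mu$ on $G \times \mathbf{i} Z_\M$; a change of variables absorbing the horizontal shear $s\beta \in Z_\M$ into $\alpha$ then reduces $U$-invariance of $\mu$ to invariance of $\bar\mu$ under $(g, \beta) \mapsto (u_s g, \beta)$.

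The crux is then to show that $\mu$ has the local form of $m_\M$. Disintegrating $\bar\mu$ over its projection to $\mathbf{i} Z_\M$ yields, for almost every $\beta$, a $U$-invariant Radon measure $\bar\mu_\beta$ on a local piece of a $G$-orbit in $\M^{(1)}$. The hypothesis that $\mu$ assigns zero measure to the set of surfaces with horizontal saddle connections excludes $\bar\mu_\beta$ being supported on closed $U$-orbits (which correspond precisely to such surfaces), so a Dani-type classification of $U$-invariant measures on $\SL_2(\R)$-orbits forces each $\bar\mu_\beta$ to be Haar on the ambient $G$-orbit. Combined with a control of the $\mathbf{i} Z_\M$-marginal using the $U$-ergodicity of $\mu$, this gives $\mu$ locally the form of $m_\M$, namely Haar on $G$ times Lebesgue on $\mathfrak{R}_\M$, so $\mu$ is $G$-invariant. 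Then $U$-ergodicity implies $G$-ergodicity and the uniqueness of $G$-invariant ergodic probability measures gives $\mu = m_\N$ for some invariant subvariety $\N \subset \M$. The $Z_\M$-invariance of $\mu$ forces $Z_\M \subset \mathfrak{R}_\N$; together with $\mathfrak{R}_\N \subset \mathfrak{R}_\M$ this yields $\mathfrak{R}_\N = \mathfrak{R}_\M$, and the rank-one dimension formula $\dim_\C \N = 2 + \dim_\C \mathfrak{R}_\N = 2 + \dim_\C \mathfrak{R}_\M = \dim_\C \M$ then forces $\N = \M$, so $\mu = m_\M$.

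The main obstacle I anticipate is controlling the $\mathbf{i} Z_\M$-marginal of $\bar\mu$: no direct invariance is assumed along the imaginary Rel direction, so one must carefully combine the $U$-ergodicity of $\mu$ with the rank-one structure and the $Z_\M$-invariance to pin down this marginal as Lebesgue and simultaneously rule out $\bar\mu_\beta$ being a non-Haar $U$-invariant measure on a generic local $G$-slice (which is not a priori a finite-volume homogeneous quotient). This delicate combination is the heart of the argument, carried out in \cite{CaltaWortman} in the eigenform setting and in \cite{SSWY} in general.
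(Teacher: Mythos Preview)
The paper does not actually prove this proposition: it merely states the result and attributes the proof to \cite[\S 6]{CaltaWortman} for eigenform loci in genus two and to \cite{SSWY} for the general rank-one case. So there is no ``paper's own proof'' to compare against, and indeed your proposal already cites the same two references for the heart of the argument.

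Your outline is a reasonable sketch of the strategy and correctly identifies the essential difficulty: there is no direct invariance along $\mathbf{i} Z_\M$, and the local $G$-slices are not finite-volume homogeneous quotients, so one cannot simply invoke Dani's classification fiberwise. You flag this honestly, but as written the proposal is not a self-contained proof --- the step ``a Dani-type classification \ldots forces each $\bar\mu_\beta$ to be Haar'' is precisely the nontrivial content of \cite{SSWY}, and the control of the $\mathbf{i} Z_\M$-marginal cannot be separated from it in the way your disintegration suggests. One minor imprecision in the endgame: you write $\dim_\C \N = 2 + \dim_\C \mathfrak{R}_\N$, assuming $\N$ has rank one; strictly speaking you should argue that $\mathrm{rank}(\N) \geq 1$ gives $\dim_\C \N \geq 2 + \dim_\C \mathfrak{R}_\N$, which together with $\mathfrak{R}_\N = \mathfrak{R}_\M$ and $\N \subset \M$ still forces $\N = \M$.
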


\medskip

{\bf Notational conventions.} We will be interested in the horocycle dynamics on invariant subvarieties of rank one. Since the horocycle flow preserves the area of a surface, it is sufficient to consider the area one locus in an invariant subvariety. {\em From now on, to simplify notation, we will restrict attention to area-one surfaces and we will write $\H$ for $\H^{(1)}$, $\M$ for $\M^{(1)}$, etc.} Also, although some statements are valid in greater generality, {\em from now until the end of the paper, $\M$ will denote a rank one invariant subvariety in $\H$}. 

\section{Horocycle dynamics in rank-one, Rel-one invariant subvarieties and compact extension} 

This section is concerned with the classification of ergodic $U$-invariant measures supported on invariant subvarieties of rank one and Rel-dimension one and related compact extensions where marked points are added. {\em Throughout this section we assume that, in addition to being of rank one, $\M$ is of Rel-dimension one.}

\subsection{The drift argument}\label{subsec: drift}
The following key argument is modeled on Ratner's work and was adapted to the present context in \cite{eskin2006unipotent, bainbridge2016horocycle}. 

\begin{prop}\label{drift}
Let $\mu$ be an ergodic $U$-invariant probability measure on $\M$. If $\mu$ is not supported on a closed $U$-orbit then $\dim(N^\circ_{\mu}) \geq 2$.  
\end{prop}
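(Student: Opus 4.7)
The strategy is the classical drift argument of Ratner, tailored to the translation-surface setting as in \cite{bainbridge2016horocycle, eskin2006unipotent}. The inclusion $U \subset N_\mu$ yields $\dim N^\circ_\mu \geq 1$ automatically, and since $N_\mu$ is closed in $N_\M$ (Cor.~4.20 of \cite{bainbridge2016horocycle}), I need only produce a sequence of nontrivial elements of $N_\mu$ accumulating at the identity in a direction transverse to $\u$. The plan is to take two nearby generic points on different $U$-orbits, push them by the horocycle flow long enough that their relative displacement becomes of unit size in a direction lying in $N_\M$, and then invoke Proposition \ref{prop: first} to promote this limit displacement to an element of $N_\mu$.

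First, by a Lusin/Birkhoff argument applied to $\Omega_\mu$, I fix a compact set $K \subset \Omega_\mu$ of nearly full $\mu$-measure. The hypothesis that $\mu$ is not supported on a closed $U$-orbit, combined with Poincar\'e recurrence for the $U$-action, yields sequences $q_n, q_n' \in K$ with $q_n' \to q_n$, $q_n' \notin U q_n$, and times $s_n \to \infty$ such that also $u_{s_n} q_n, u_{s_n} q_n' \in K$. The rank-one hypothesis provides a local product structure $(g, v) \mapsto g q_n \pluscirc v$ near $q_n$, so I can uniquely write
\[q_n' = g_n q_n \pluscirc v_n, \quad (g_n, v_n) \in L_\M, \ (g_n, v_n) \to (e, 0).\]
After absorbing the $U$-part of $g_n$ by a shift in time, I may assume $g_n = \exp(X_n)$ with $X_n$ in a fixed transversal to $\u$ in $\g$, say $X_n \in \a \oplus \mathfrak{u}^-$.

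The core of the argument is the analysis of $\mathrm{Ad}(u_s)$ on $\mathfrak{l}_\M \df \g \oplus \mathfrak{R}_\M$. Relative to the decomposition $\g = \u \oplus \a \oplus \mathfrak{u}^-$ and $\mathfrak{R}_\M = Z_\M \oplus \mathbf{i}\, Z_\M$ (the latter of real dimension two by the Rel-one hypothesis), $\mathrm{Ad}(u_s)$ fixes $\u$ and $Z_\M$, sends $\a$ to $\a + s\,\u$, sends $\mathfrak{u}^-$ to a degree-two polynomial with components in $\mathfrak{u}^-, \a, \u$, and sends $\mathbf{i}\, Z_\M$ to $\mathbf{i}\, Z_\M + s\, Z_\M$. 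Crucially, the directions of $\mathfrak{l}_\M$ transverse to $\mathfrak{n}_\M = \u \oplus \a \oplus Z_\M$ are exactly $\mathfrak{u}^-$ and $\mathbf{i}\, Z_\M$, and both are amplified into $\mathfrak{n}_\M$ by $\mathrm{Ad}(u_s)$ with polynomial growth of degree one or two. I then choose $s_n$ so that $\mathrm{Ad}(u_{s_n})$ applied to $(X_n, v_n)$ has unit size in some direction of $\mathfrak{n}_\M$ transverse to $\u$. Because the transverse $\mathfrak{u}^-$ and $\mathbf{i}\, Z_\M$ components are amplified by factors of $s_n$ or $s_n^2$ into $\mathfrak{n}_\M$, requiring the amplified image to be $O(1)$ forces these bare components to vanish in the limit. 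Extracting a subsequence using the compactness of $K$, the relative displacements $(u_{s_n} g_n u_{-s_n}, u_{s_n} v_n)$ converge to a nontrivial element $h_\infty \in N_\M \setminus U$, and both $u_{s_n}q_n, u_{s_n}q_n'$ converge to points in $K \subset \Omega_\mu$.

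By the second bullet of Proposition \ref{prop: first}, $h_\infty \in N_\mu$. Varying the normalization produces a family of such elements approaching the identity in a fixed direction transverse to $\u$, and closedness of $N_\mu$ then yields a 1-parameter subgroup of $N^\circ_\mu$ beyond $U$, so $\dim N^\circ_\mu \geq 2$. The main obstacle is the simultaneous enforcement of three conditions in the normalization: that the limit be nontrivial, that it lie in $N_\M$ rather than only the larger $L_\M$, and that recurrence to $K$ survive the passage to the limit. A degenerate subcase is when infinitely many displacements have $v_n = 0$, i.e., $q_n, q_n'$ lie in a common $G$-orbit; then $\mu$ is supported on a closed $G$-orbit and the $\SL_2(\R)$-Ratner theorem applied to that orbit forces $\mu$ to be $G$-invariant, whence $B \subset N_\mu$ and the conclusion holds.
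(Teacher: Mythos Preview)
Your outline is on the right track, but there is a genuine gap at the core.

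\textbf{The displacement does not converge as you claim.} You assert that $(u_{s_n} g_n u_{-s_n},\, u_{s_n} v_n)$ converges to some $h_\infty \in N_\M$, but the $\u$-component of $u_{s} g_n u_{-s}$ typically diverges. If $X_n = \left(\begin{smallmatrix} a_n & 0 \\ c_n & -a_n\end{smallmatrix}\right)$ then $\mathrm{Ad}(u_s)X_n = \left(\begin{smallmatrix} a_n + s c_n & -2sa_n - s^2 c_n \\ c_n & -(a_n + sc_n)\end{smallmatrix}\right)$; choosing $s_n$ so that the $\a$- or $Z_\M$-component is of unit size makes the upper-right entry of order $s_n \to \infty$. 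Your ``absorbing the $U$-part of $g_n$ by a shift in time'' removes the $\u$-component of $X_n$ only at $s=0$; it does nothing about the $\u$-component that conjugation by $u_s$ feeds back in from the $\a$- and $\u^-$-directions. The remedy is a time reparametrisation: factor $u_s \exp(X_n) = \exp(X_n(s))\, u_{P_n(s)}$ with $X_n(s)$ having vanishing $\u$-entry, and compare $u_{s_n} q_n'$ not with $u_{s_n} q_n$ but with $u_{P_n(s_n)} q_n$.

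\textbf{The ``main obstacle'' is acknowledged but not resolved.} Once the reparametrisation is in place, the simultaneous requirements become: (i) the non-$\u$ displacement lies in a prescribed size window, (ii) $u_{s_n} q_n' \in K$, and (iii) $u_{P_n(s_n)} q_n \in K$. You flag this as the main obstacle and then stop. This is precisely the quantitative heart of the argument: on the interval where the displacement stays bounded one has $|P_n'(s)-1| < 2\varepsilon$, so (iii) holds on a set of density $>1-4\varepsilon$; together with density $>1-2\varepsilon$ for (ii) and $\geq \tfrac12$ for (i), a common $s_n$ exists provided $\varepsilon$ is small. Poincar\'e recurrence alone does not supply this.

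\textbf{The degenerate subcase is both wrong and unnecessary.} Having $v_n = 0$ along a subsequence (so $q_n' \in G q_n$) does \emph{not} imply that $\mu$ is supported on a closed $G$-orbit, so the appeal to the $\SL_2(\R)$ Ratner theorem is unjustified. No separate treatment is needed: when the Rel displacement vanishes, the same drift analysis produces a limit element in $A$, and Proposition~\ref{prop: first} gives the conclusion directly.
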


\begin{remark}
Contrary to \cite[Prop 9.4]{bainbridge2016horocycle}, we do not need to assume that $\mu$ does not assign positive measure to the set of surfaces with horizontal saddle connection.
\end{remark}

\begin{proof}
Let $\varepsilon>0$ and let $K \subset \Omega_{\mu}$ be a compact subset such that $\mu(K)>1-\varepsilon$. Such a compact set exists by inner regularity of the measure $\mu$. By combining Birkhoff's ergodic theorem and Egorov's theorem, one obtains that there is another compact set $K_0 \subset \M$ and $T_0$ large enough so that for any $T>T_0$ and any $q \in K_0$,
\begin{align}
    \frac{1}{T}|\{s \in [0,T] :  u_s q \in K \}| > 1-2\varepsilon.
\end{align}

Let $\g$ denote the Lie algebra of $G$, which we identify with the set of $2 \times 2$ matrices with vanishing trace, endowed with the Lie bracket $[A,B] = AB - BA$, and let $\exp: \g \to G$ be the  exponential map. We also choose a $G$-equivariant linear isomorphism $\mathfrak{R}_{\M} \simeq \R^2$ where the action of $G$ on $\R^2$ is the usual linear action. Under this isomorphism, we have $Z_{\M} \simeq \R \times \{0\}$. In the remainder of this proof, we shall abuse slightly notations and write elements of $\mathfrak{R}_{\M}$ as elements of $\R^2$.

If $\mu$ is not supported on a closed  orbit,
there is a point $q \in K_0$ and a sequence of points $q_n \in K_0 \sm U q$ that converges to $q$. For $n$ large enough, we can find $V_n \in \g$ and $W_n \in \mathfrak{R}_{\M}$ such that 
$$(\exp(V_n),W_n) \in  L_\M \sm U, \ \ \ (V_n, W_n) \to (0,0) \ \ \ \text{ and } \  q_n = \exp(V_n) q \pluscirc W_n.
$$
Write
\begin{align*}
    V_n = \begin{pmatrix} a_n && u_n \\ v_n && - a_n \end{pmatrix}  \  \ \text{ and } \ W_n = (x_y,y_n).
\end{align*}

Let $P_n(s)= -v_ns^2 + (1-2a_n)s + u_n$. A computation using \eqref{eq: equivariance Rel} shows that for any $s>0$, 
\begin{align*}
    u_s q_n = \exp\left(\begin{smallmatrix} a_n + sv_n && 0 \\ v_n && -(a_n + sv_n) \end{smallmatrix} \right) u_{P_n(s)} q \pluscirc (x_n + sy_n,y_n).
\end{align*}
Let $0<\eta<1$ and define 
\begin{align*}
    I_n \df \{s \in \mathbb{R}  : \max( |a_n + sv_n| , |x_n + sy_n|) < \eta \varepsilon \}. 
\end{align*}

If $n$ is large enough, the set $I_n$ contains $0$ and we denote by $J_n$ the connected component of $I_n$ that contains $0$. For any $s>0$, we have $P_n'(s) = -2(sv_n + a_n) + 1$ and thus if $s \in I_n$ we get $|P_n'(s)| < 1 + 2\eta\varepsilon<1+2\varepsilon$. We want to find $s \in J_n$ such that 
\begin{equation}\label{eq: first requirement}
u_s  q_n \text{  and } u_{P_n(s)} q \text{ are both in }K 
\end{equation}
and 
\begin{equation}\label{eq: second requirement}
\frac{\eta\varepsilon}{4} \leq \max(|a_n + sv_n|,|x_n + sy_n|) \leq \eta \varepsilon.
\end{equation}

Notice that if $n$ is large enough then $S_n \df \sup(J_n)$ is larger than $T_0$ and 
\begin{align*}
    \frac{1}{S_n} \left| \left\{s \in [0,S_n]  : \max( |a_n + sv_n| , |x_n + sy_n|) < \frac{\eta\varepsilon }{ 4 }\right\} \right | < \frac12. 
\end{align*}

Finally, the set of times $s \in [0,S_n]$ for which $u_s q_n$ is in $K$ has measure at least $(1-2\varepsilon)S_n$ and likewise for $u_s q$. Since $P_n'(s)$ is bounded  by $1 + 2\varepsilon$ on $J_n$, the set of times $s \in [0,S_n]$ for which $u_{P_n(s)} q$ belongs to $K$ has measure at least $(1-4\varepsilon)S_n$. Consequently, whenever  $\varepsilon$ is small enough, for all $n$ large enough (depending on $T_0$ and hence on $\varepsilon$), $s=s_n$ satisfies the desired properties \eqref{eq: first requirement} and \eqref{eq: second requirement}. We choose $\eta$ small enough to that $\exp(v) p \pluscirc w$ is well defined for any $v$, $w$ with coefficients smaller that $\eta \varepsilon$ and $p \in K$. Passing to a converging subsequence in $u_{s_n}  q$ and $u_{p_n(s)} q$ and invoking Proposition 4.6 in \cite{bainbridge2016horocycle}, we obtain a pair of generic points $p$ and $p'$ in $K$ such that $p' = \exp(V)  p \pluscirc W$ with
\begin{align*} 
    V= \begin{pmatrix} a && 0 \\ 0 && -a \end{pmatrix} \ \mathrm{and} \ W = (x,0), 
\end{align*}
\noindent and $\eta\varepsilon/4 \leq \max(a,x) \leq \varepsilon$. From Proposition \ref{prop: first} that $\exp(V, W) \in N_{\mu}$. Since $\varepsilon$ was arbitrary, this shows that $N_\mu \sm U$ contains elements arbitrarily close to the identity. Since $N_\mu$ is a Lie subgroup of $N_\N$, this implies $\dim N_\mu^\circ >1$. 
\end{proof}

\subsection{Saddle connection free $U$-invariant ergodic  measures}
The set of surfaces with horizontal saddle connections is invariant under $U$. In particular, if $\mu$ is an ergodic $U$-invariant measure, the measure of this set is either 1 or 0. In the second case, we say $\mu$ 
is a {\it saddle connection free measure.} 

\begin{thm}\label{classification}
Let $\mu$ be a saddle connection free ergodic $U$-invariant probability measure on $\M$. Then there is some invariant subvariety $\N \subset \M$ and some $x \in Z_\M$ such that $\mu = \Rel_{x \ast} m_{\N}$. 
\end{thm}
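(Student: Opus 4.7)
The plan is to combine the drift Proposition \ref{drift} with the rank-one and Rel-one hypotheses to constrain the Lie algebra of $N_\mu^\circ$, and then to shift $\mu$ by a suitable real-Rel translation to produce a $B$-invariant measure. Once this is achieved, the Eskin--Mirzakhani--Mohammadi theorem (which equates $B$-invariance and $G$-invariance for ergodic probability measures on strata) promotes the measure to a $G$-invariant one, so that $\nu \df \Rel_{x\ast}\mu = m_\N$ for some invariant subvariety $\N \subset \M$, and inverting the translation yields $\mu = \Rel_{-x\ast}m_\N$ as claimed.

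Under the standing assumptions, $N_\M = B \ltimes Z_\M$ has dimension $3$; writing $X, Y, W$ for Lie algebra generators of $U$, $A$, and $Z_\M$, the brackets $[Y, X] = 2X$, $[Y, W] = W$, $[X, W] = 0$ follow from the action of $A$ on $Z_\M$ by $e^t$-scaling and the triviality of the action of $U$ on $Z_\M$. First, $\mu$ cannot be supported on a closed $U$-orbit: such an orbit would force some $u_T q = q$ with $T > 0$, hence the surface would be horizontally completely periodic and in particular admit horizontal saddle connections, contradicting saddle-connection-freeness. Proposition \ref{drift} then yields $\dim N_\mu^\circ \geq 2$, so $\mathrm{Lie}(N_\mu^\circ)$ is a subalgebra of $\mathrm{Lie}(N_\M)$ of dimension $2$ or $3$ containing $\mathrm{span}(X)$. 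A direct enumeration using the bracket relations shows that such a subalgebra either contains $W$, in which case it equals $\mathrm{span}(X, W)$ or all of $\mathrm{Lie}(N_\M)$, or else it equals $\mathrm{span}(X, Y + \beta W)$ for some $\beta \in \R$.

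In the former case, $\mu$ is invariant under the one-parameter subgroup $\{(1, tW)\}_{t \in \R}$, which is the real-Rel flow along $Z_\M$; Proposition \ref{prop: from SSWY} then gives $\mu = m_\M$, so we take $\N = \M$ and $x = 0$. In the latter case, I plan to take $x \df \beta W$ and verify that $\nu \df \Rel_{x\ast}\mu$ is $B$-invariant. Using the $G$-equivariance $b \circ \Rel_z = \Rel_{bz} \circ b$ from \eqref{eq: equivariance Rel} together with the definition \eqref{eq: definition of the normalizer} of the normalizer, a short computation yields the transfer rule $(b, z) \in N_\nu \iff (b, z + (b-1)x) \in N_\mu$. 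Linearizing at the identity, and using that the infinitesimal action of $Y$ on $Z_\M$ is the identity while that of $X$ is trivial, the Lie algebra element $Y + \beta W$ of $N_\mu$ transfers to $Y + (\beta - c) W$ in $\mathrm{Lie}(N_\nu)$ when $x = cW$; setting $c = \beta$ places $Y$ in $\mathrm{Lie}(N_\nu^\circ)$. Thus $\nu$ is $A$- and $U$-invariant, hence $B$-invariant, and the Eskin--Mirzakhani--Mohammadi theorem supplies the desired $\nu = m_\N$ for some invariant subvariety $\N \subset \M$.

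The main technical input, Proposition \ref{drift}, is already established; the rest reduces to a normal-form argument in the $3$-dimensional Lie algebra of $N_\M$. The only mild bookkeeping points are the well-definedness of $\Rel_x$ on $\mu$-typical surfaces, which is immediate from saddle-connection-freeness, and the verification of the transfer rule, which follows directly from $G$-equivariance of Rel. The most delicate step is to identify the correct shift $x$ in the second case; but both the enumeration of subalgebras and the value $x = \beta W$ are forced by the bracket relations once Proposition \ref{drift} has been invoked.
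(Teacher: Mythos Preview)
Your proposal is correct and follows essentially the same route as the paper: both invoke Proposition~\ref{drift} to rule out the periodic case and obtain extra invariance, then split into the case where $N_\mu^\circ$ contains the real-Rel direction (concluding $\mu=m_\M$ via Proposition~\ref{prop: from SSWY}) and the case where it contains a one-parameter group with nontrivial $A$-component, in which a real-Rel shift by the appropriate $x$ makes the measure $B$-invariant and hence equal to some $m_\N$ by \cite{eskin2018invariant}. The only cosmetic difference is that the paper picks a one-parameter subgroup $\rho(t)$ directly and computes $\rho(t)=(g_{at},(e^{at}-1)w)$ explicitly to read off the shift $w$, whereas you phrase the same computation as a classification of $2$-dimensional subalgebras of $\mathrm{Lie}(N_\M)$ containing $X$ together with the transfer rule for $N_\nu$; these are the same argument in slightly different packaging.
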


\begin{proof}
It is well-known that the normalized length measure on a periodic $U$-orbit is not saddle connection free. Thus, by Proposition \ref{drift} there are $a \geq 0$, $w \in Z_{M}$ and a homomorphism $\rho:\R \to N^\circ_{\mu} \sm U$ such that
\begin{equation}\label{eq: one parameter group rho}
    \left. \frac{d}{dt} \right|_{t=0} \rho(t) = \Bigl( \left(\begin{smallmatrix} a && 0 \\ 0 && -a \end{smallmatrix} \right), w \Bigr), \ \ \text{ where } (a, w) \neq (0,0).
\end{equation}

If $a = 0$, we have that $\rho(t) = (\mathrm{Id},t w)$ and thus $\rho(t)_{\ast} \mu = {\Rel_{t w}}_{\ast}\mu = \mu$. Therefore $\mu$  is invariant under both $U$ and $\Rel_{t  w}$ for all $t$. Since $\M$ is of  Rel-dimension 1, we deduce via Proposition \ref{prop: from SSWY} that $\mu = m_{\M}$. If $a \neq 0$ we have  
\begin{align*}
     \rho(t)=\Bigl( \left(\begin{smallmatrix} e^{at} && 0 \\ 0 && e^{-at} \end{smallmatrix} \right), e^{at} w - w \Bigr)
\end{align*}
\noindent and it follows via \eqref{eq: equivariance Rel} that the measure $\mu_0 \df {\Rel_{-w}}_{\ast} \mu$ is  invariant under the upper triangular subgroup $B$. Here we have used the assumption that $\mu$ assigns zero measure to surfaces without horizontal saddle connections, since it implies $Z_\M \subset Z^{(\mu)}$, that is $\Rel_{-w}(q)$ is well-defined for every $w \in Z_\M$, for $\mu$-a.e. $q$. It follows from   \cite{eskin2018invariant} that there is an invariant subvariety $\N \subset \M$ such that $\mu_0 = m_{\N}$ and thus $\mu = \Rel_{w \ast} m_{\N}$. 
\end{proof}

\begin{remark}\label{remark: where is this used}
The drift argument relies on a fine analysis of the displacement  between two horocycle trajectories of nearby points. In order to remove the assumption that $\M$ is of Rel-dimension one, it is tempting to argue inductively, replacing the horocycle flow by a well-chosen flow in the group $N_{\mu}$ in order to pick up additional invariance of the measure. However, to compute the displacement between two nearby trajectories under this new flow, one would need the `group-action property' 
\eqref{eq: equivariance Rel}, with $G$ replaced by larger groups $G \ltimes \mathfrak{R}_0,$ for $\mathfrak{R}_0 \subset \mathfrak{R}$ (even the weaker property $\Rel_{x_1+x_2}(q) = \Rel_{x_1}(\Rel_{x_2}(q))$ for suitable $x_1, x_2 \in \mathfrak{R}$ might suffice). These relations are not true in general. 

\end{remark}

We now list some examples to which our results apply. For relevant definitions, see \cite{mcmullen2006prym}. Let $D \equiv 0 \mod  4$ or $D \equiv 1 \mod 4$, let $\mathcal{O}_D$ be the quadratic order of discriminant $D$ and let $\Omega E_D$ be the space of Prym eigenforms for real multiplication by $\mathcal{O}_D$. Let $\Omega E_D^{odd}(2,2)$ and $\Omega E_D(2,1,1)$ be the intersection of $\Omega E_D$ with the strata $\mathcal{H}^{odd}(2,2)$ and $\H(2,1,1)$. It follows from \cite{lanneau2018connected} that these spaces are not empty whenever $D \not\equiv 5 \ \mathrm{mod} \ 8$, in which case there are at most two connected components. Any such connected component is a rank 1 invariant subvariety of Rel-dimension 1, see \cite[Lemma 3.1]{lanneau2016complete}. Thus, Theorem \ref{classification} has the following corollary:

\begin{cor}\label{cor: known examples}
Let $D \equiv 0,1 \ \mathrm{mod} \ 4$, $D \not \equiv 5 \ \mathrm{mod} \ 8$ be a positive integer that is not a square and let $\M$ be a connected component of $\Omega E_D^{odd}(2,2)$ or $\Omega E_D(2,1,1)$. Then $\M$ satisfies the weak classification of $U$-invariant measures. 
\end{cor}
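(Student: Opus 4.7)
The plan is to reduce the corollary directly to Theorem \ref{classification}, which is the substantive result of the section. To apply that theorem, I need to verify two properties for each connected component $\M$ in question: (i) it is an invariant subvariety of $\H^{\mathrm{odd}}(2,2)$ or $\H(2,1,1)$; and (ii) it has rank one and Rel-dimension one.

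First, I would cite \cite{lanneau2018connected} to record that, under the arithmetic hypotheses $D \equiv 0,1 \pmod 4$ and $D \not\equiv 5 \pmod 8$ with $D$ a non-square, the loci $\Omega E_D^{\mathrm{odd}}(2,2)$ and $\Omega E_D(2,1,1)$ are nonempty and each has at most two connected components. This ensures that the statement is not vacuous. The fact that each such connected component is a $\mathrm{GL}_2^+(\R)$-orbit-closure, hence an invariant subvariety in the sense of Definition \ref{def: invariant subvariety}, is standard for Prym eigenform loci (they are cut out by linear conditions in period coordinates coming from real multiplication by $\mathcal{O}_D$ and the Prym involution).

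Next, I would invoke \cite[Lemma 3.1]{lanneau2016complete} to obtain that any such connected component $\M$ satisfies $\dim_\C(\M) = 3$. By the remark made in the excerpt following Definition \ref{def: rank}, the condition $\dim_\C(\M) = 3$ is equivalent to $\M$ being a rank-one invariant subvariety of Rel-dimension one. Thus $\M$ satisfies precisely the hypotheses under which Theorem \ref{classification} was proved.

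Finally, I would conclude: by Theorem \ref{classification}, any saddle-connection-free ergodic $U$-invariant probability measure on $\M$ is of the form $\Rel_{x\ast} m_\N$ for some invariant subvariety $\N \subset \M$ and some $x \in Z_\M$. This is exactly the statement that $\M$ satisfies the weak classification of $U$-invariant measures, as defined in the introduction. There is no serious obstacle here — the corollary is essentially a bookkeeping application of the main theorem to a family of known examples; the only content is checking that the rank-one and Rel-dimension-one conditions are met, and this is taken from the cited literature.
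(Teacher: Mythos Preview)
Your proposal is correct and follows essentially the same route as the paper: the paper's proof is the short paragraph immediately preceding the corollary, which cites \cite{lanneau2018connected} for nonemptiness and \cite[Lemma 3.1]{lanneau2016complete} for the rank-one, Rel-dimension-one property, then appeals to Theorem \ref{classification}. The only cosmetic difference is that the paper quotes the rank and Rel-dimension directly from \cite[Lemma 3.1]{lanneau2016complete}, whereas you pass through $\dim_\C(\M)=3$ and the equivalence noted after Definition \ref{def: rank}; this is harmless.
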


Let $\M$ be a connected component of $\Omega E_D$. In the case where $D$ is a square, $\M$ consists of branched covers of flat tori and the fact that $\M$ satisfies the weak classification of $U$-invariant measures follows from the work of Eskin, Markloff and Morris \cite[Theorem 2.1]{EMM1}. When $D$ is not a square, $\M$ is a nonarithmetic invariant subvariety, which means that its field of definition is a non trivial algebraic extension of $\mathbb{Q}$. The examples mentioned in Corollary \ref{cor: known examples} do not arise via a covering construction and thus provide the first non-trivial examples in genus 3 of invariant subvarieties in which a classification of $U$-invariant measures is known. It is worth mentioning that any nonarithmetic rank 1 invariant subvariety of Rel-dimension 1 in $\H^{odd}(2,2)$ arises in this way, see \cite{ygouf2020non}. The other Prym eigenform loci in genus 3 are $\Omega(\kappa) \df \Omega E_D \cap \mathcal{H}(\kappa)$ for $\kappa \in \{4^{odd}, 4^{hyp}, (2,2)^{hyp}, (1,1,1,1) \}$. The first three are a finite union of Teichm{\"u}ller curves (\textit{i.e.,} satisfy $dim(\M) = 2$), see \cite[Prop 2.3]{lanneau2018connected} for a proof in the case $(2,2)^{hyp}$, and the latter has Rel-dimension 2. In genus 4 and 5, no explicit description of the connected components of the Prym eigenform loci is known besides the case $\H(6)$ that is addressed in \cite{lanneau2020weierstrass}. Nevertheless, we can still use Prym eigenforms to produce more examples of rank one invariant subvarieties of Rel-dimension 1. We recall the following that lists all the potential rank one invariant subvarieties of Rel-dimension 1 arising from Prym eigenforms: 

\begin{lem}[{\cite[Lemma 3.1]{lanneau2016complete}}]\label{more examples}
Let $D \equiv 0,1 \ \mathrm{mod}\  4 $, let $g=4,5$, let $\kappa$ be an integer partition of $2g-2$ such that $\Omega E_D(\kappa) \neq \emptyset$ and let $\M$ be a connected component of $\Omega E_D(\kappa)$. Then $\M$ is a rank one invariant subvariety and its Rel-dimension is 1 if, and only if, $\kappa$ belongs to the following list:
$$ (3,3), (2,2,2)^{even}, (1,1,4), (1,1,2,2), (4,4)^{even}.$$
\end{lem}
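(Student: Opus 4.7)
The plan is to read off the rank and Rel-dimension of $\Omega E_D(\kappa)$ directly from the combinatorics of the associated Prym involution $\tau$, and then to enumerate the partitions $\kappa$ of $2g-2$ for $g\in\{4,5\}$ which meet both $\mathrm{rank}(\M)=1$ and Rel-dimension one.

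First I would set up the linear-algebraic picture. Fix $(X,\omega)\in\Omega E_D(\kappa)$, let $\tau$ be its Prym involution, let $g_0$ be the genus of $X/\tau$, and let $V\subset H^1(X,\Sigma;\C)$ be the tangent space to $\Omega E_D(\kappa)$ at $(X,\omega)$. By construction $V$ lies in the $-1$ eigenspace of $\tau^*$, and $\Res(V)$ is cut out inside the Prym part $H^1(X;\C)^-$ by the requirement that $\omega$ be an eigenform for real multiplication by $\mathcal{O}_D$. Since the two real embeddings of $\mathcal{O}_D\otimes\mathbb{Q}$ split $H^1(X;\C)^-$ into two pieces of equal complex dimension $g-g_0$, one has $\dim_{\C}\Res(V)=g-g_0$, and therefore $\mathrm{rank}(\M)=(g-g_0)/2$. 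The rank-one condition thus forces $g_0=g-2$, and Riemann-Hurwitz then produces $r=2g-4g_0+2$ fixed points of $\tau$ on $X$: namely $r=2$ for $g=4$ and $r=0$ for $g=5$.

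Next I would compute the Rel-dimension. Since $\tau$ is a holomorphic (hence orientation-preserving) involution, it acts on the generators of $\mathfrak{R}$ by $\tau^*\xi^*=(\tau(\xi))^*$, and the single relation $\sum_\xi\xi^*=0$ is $\tau$-invariant. Consequently $\mathfrak{R}_\M=V\cap\mathfrak{R}$ coincides with the $-1$ eigenspace of $\tau^*$ on $\mathfrak{R}$, whose complex dimension equals the number $p$ of size-two $\tau$-orbits on $\Sigma$; Rel-dimension one is therefore equivalent to $p=1$. From the local normal form $\omega\sim z^k\,dz$ at a fixed point of $\tau$, the identity $\tau^*\omega=-\omega$ forces any $\tau$-fixed zero to have even order, while any pair of zeros swapped by $\tau$ must consist of two zeros of the same order.

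The final step is the enumeration, under the above constraints, of partitions of $2g-2$ admitting a $\tau$-action with exactly one swapped pair of zeros and with the correct number $r$ of fixed points. For $g=4$ (so $r=2$), partitions in which a zero of odd order appears with multiplicity one admit no Prym involution at all, while $p=0$ discards partitions such as $(6)$ and $(4,2)$ and $p\geq 2$ discards partitions such as $(2,1,1,1,1)$ and $(1,1,1,1,1,1)$; what remains is exactly $(3,3),\,(2,2,2),\,(1,1,4),\,(1,1,2,2)$. For $g=5$ (so $r=0$, meaning $\tau$ must act freely) the only partition of $8$ whose parts pair off into matching pairs with $p=1$ is $(4,4)$. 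The step I expect to be the real obstacle is not the combinatorics itself but the non-emptiness and component identification: one must verify that each combinatorial configuration is realised by an honest Prym eigenform and pin down which connected component of the ambient stratum hosts the Prym eigenform locus. This is what is recorded by the spin-parity superscripts on $(2,2,2)^{\mathrm{even}}$ and $(4,4)^{\mathrm{even}}$, and it is handled in the cited references rather than by the dimension count.
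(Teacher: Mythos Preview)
The paper does not prove this lemma; it is quoted verbatim as \cite[Lemma 3.1]{lanneau2016complete} and used as a black box. So there is no ``paper's proof'' to compare against, and your sketch stands on its own.

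Your argument is essentially the one carried out in the cited reference, and the enumeration is correct. One expository point is worth tightening: in McMullen's definition of $\Omega E_D$, the Prym variety is required to be an abelian surface, i.e.\ $g-g_0=2$ is built into the hypothesis $\Omega E_D(\kappa)\neq\emptyset$. So rank one is automatic rather than something to be derived, and the real content of the lemma is the Rel-dimension computation $\dim_{\C}\mathfrak{R}_\M = p$ together with the enumeration of $\kappa$ with $p=1$. Your computation of $\mathfrak{R}_\M$ as the $(-1)$-eigenspace of $\tau^*$ on $\mathfrak{R}$ is correct: the eigenform condition constrains only absolute periods, so $V$ contains the entire anti-invariant part of $\ker(\Res)$, and the permutation action of $\tau$ on $\{\xi^*\}$ gives exactly one anti-invariant dimension per swapped pair. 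The Riemann--Hurwitz count of fixed points, the parity constraint on fixed zeros, and the resulting case analysis for $g=4$ (two fixed points, possibly regular) and $g=5$ (free action, hence $|\Sigma|=2$) are all fine. As you note, pinning down the spin component and verifying non-emptiness is precisely what \cite{lanneau2016complete} supplies and is not recoverable from the dimension count alone.
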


\begin{prop}\label{prop: not empty}
For any $\kappa$ as in Lemma \ref{more examples}, there is $D \equiv 0,1 \ \mathrm{mod}\  4 $ such that $D$ is not a square and $\Omega E_D(\kappa) \neq \emptyset$.
\end{prop}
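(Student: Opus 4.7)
The plan is, for each $\kappa$ appearing in Lemma \ref{more examples}, to exhibit an explicit Prym eigenform belonging to $\Omega E_D(\kappa)$ for some non-square $D \equiv 0, 1 \pmod{4}$, using the polygon-and-cylinder-decomposition method of McMullen \cite{mcmullen2006prym} and its higher-genus extensions in \cite{lanneau2020weierstrass, lanneau2016complete}. A point of $\Omega E_D(\kappa)$ consists of a triple $(X, \omega, \tau)$ with $\tau$ a holomorphic involution satisfying $\tau^* \omega = -\omega$ whose Prym part is two-dimensional over $\mathbb{C}$, together with real multiplication by $\mathcal{O}_D$ on $\mathrm{Prym}(X,\tau)$ having $\omega$ as an eigenform. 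The standard way to certify both the Prym involution and the real multiplication in one stroke is to take a centrally symmetric polygon $P \subset \mathbb{C}$ with the $\tau$-action induced by the central symmetry, and to tune its side lengths so that the resulting surface admits two transverse, $\tau$-equivariant cylinder decompositions whose moduli ratios satisfy a quadratic equation over $\mathbb{Q}$ with non-square discriminant $D$.

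For each of the five singularity profiles in Lemma \ref{more examples}, I would then exhibit such a polygon. For $(3,3)$ and $(4,4)^{even}$, a generalized symmetric staircase would work, in the spirit of McMullen's L-shapes and the Lanneau--Nguyen models in $\H(6)$ treated in \cite{lanneau2020weierstrass}. For $(2,2,2)^{even}$, $(1,1,2,2)$ and $(1,1,4)$, it is convenient to start with a non-arithmetic genus-two Prym eigenform in $\Omega E_D(1,1)$ --- whose existence for infinitely many non-square $D$ goes back to \cite{mcmullen2006prym} --- and then pass to a $\tau$-equivariant branched degree-two cover designed to split the double zero into the prescribed combinatorial pattern. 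A Riemann--Hurwitz calculation, applied to the quotient $X/\tau$, is used at this stage to verify that the number of Weierstrass points of $\tau$ matches the required topology. Since the polygon parameters vary in a positive-dimensional family, infinitely many distinct values of $D$ are realized, and in particular infinitely many non-square ones.

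The main technical obstacle is matching the \emph{connected component} of the ambient stratum when a spin-parity label is present, i.e.\ for $(2,2,2)^{even}$ and $(4,4)^{even}$. To handle this, I would compute the Arf invariant of the constructed $(X, \omega)$ directly from the cylinder decomposition used to build it, via Johnson's formula summing a local contribution from each cylinder; if this parity is wrong, one can either perturb the construction within the same Prym eigenform locus (the parity is locally constant but the construction has a symmetric variant) or replace $\tau$ by a conjugate Prym involution on $X$ to switch components. A secondary check is that the two transverse cylinder decompositions have incommensurable moduli, so that the trace field of the induced affine endomorphism is genuinely quadratic and not $\mathbb{Q}$; this holds generically in the parameter family, and in any case follows automatically once $D$ is chosen to be a non-square.
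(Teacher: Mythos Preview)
Your approach is entirely different from the paper's. The paper gives a short non-constructive argument: for each $\kappa$ in the list, the ambient locus $\mathrm{Prym}(\kappa)$ is an arithmetic rank-two invariant subvariety, and by the Eskin--Filip--Wright result \cite[Theorem~1.7]{thealgebraichull} it must contain a nonarithmetic rank-one invariant subvariety $\M$. Any such $\M$ contains a surface $q$ whose Veech group has a hyperbolic element $A$, and McMullen's \cite[Theorem~3.5]{mcmullen2006prym} then forces $q \in \Omega E_D(\kappa)$ with $\mathbb{Q}(\sqrt{D}) = \mathbb{Q}(\mathrm{tr}(A))$; nonarithmeticity rules out $\mathrm{tr}(A) \in \mathbb{Q}$, so $D$ is not a square. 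No polygons, no case analysis, no spin computations.

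Your constructive route is in principle viable --- Lanneau and Nguyen do produce explicit prototypes of this kind --- but as written the proposal has a real gap in the covering step. You suggest building the $(2,2,2)^{even}$, $(1,1,4)$, $(1,1,2,2)$ examples as $\tau$-equivariant degree-two branched covers of a genus-two eigenform in $\Omega E_D(1,1)$. But in genus two the Prym involution is the hyperelliptic one and $\mathrm{Prym}$ is the full Jacobian; after passing to a double cover you must check that the lifted involution $\tilde\tau$ still has $\dim_{\mathbb{C}} \mathrm{Prym}(X,\tilde\tau) = 2$, that the real multiplication lifts to $\mathrm{Prym}(X,\tilde\tau)$ rather than to the full Jacobian, and that the resulting zero divisor lands in the prescribed stratum (note $\Omega E_D(1,1)$ has two simple zeros, not a ``double zero'' to split). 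None of this is automatic, and the Riemann--Hurwitz bookkeeping you allude to does not by itself control the dimension of the Prym part. The spin-parity verification for $(2,2,2)^{even}$ and $(4,4)^{even}$ is also left as an intention rather than carried out. The paper's argument sidesteps all of these issues at the cost of invoking a deep structural theorem; your approach would, if completed, give more explicit information but requires substantially more work than what is sketched here.
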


\begin{proof}
It is explained in \cite{lanneau2016complete} that each $\kappa$ as in Lemma \ref{more examples} is associated with a rank 2 invariant subvariety denoted by $Prym(\kappa)$, and the latter can easily be verified to be arithmetic. It thus follows from \cite[Theorem 1.7]{thealgebraichull} that this affine variety itself contains a nonarithmetic rank one invariant subvariety $\M$. It is not hard to see that $\M$ must contain a surface $q$ that contains a hyperbolic matrix $A$ in its Veech group. See for instance \cite[Remark 4.2]{wright2014field}. It follows from \cite[Theorem 3.5]{mcmullen2006prym} that $q \in \Omega E_D(\kappa)$ with $\mathbb{Q}(\sqrt{D}) = \mathbb{Q}(tr(A))$. If $tr(A)$ is rational then it follows from \cite[Theorem 9.8]{mcmullen2003teichmuller} that $q$ covers a torus, in contradiction with the fact that $\M$ is nonarithmetic.  
\end{proof}

Using Theorem \ref{classification}, we get:

\begin{cor}\label{cor: more known examples}
Let $\M$ be a connected component of $\Omega E_D(\kappa)$ where $\kappa$ and $D$ are as in Proposition \ref{prop: not empty}. Then $\M$ satisfies the weak classification of $U$-invariant measures. 
\end{cor}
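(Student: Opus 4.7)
The plan is to apply Theorem \ref{classification} directly; the corollary is essentially a combinatorial verification that the hypotheses of that theorem are met by each connected component of $\Omega E_D(\kappa)$ in the list produced by Lemma \ref{more examples} and Proposition \ref{prop: not empty}.

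First I would fix $\kappa$ in the list from Lemma \ref{more examples} and, using Proposition \ref{prop: not empty}, choose a discriminant $D \equiv 0, 1 \bmod 4$ with $D$ not a square and $\Omega E_D(\kappa) \neq \emptyset$. I would then let $\M$ be any connected component of $\Omega E_D(\kappa)$. By the general theory of \cite{eskin2015isolation, eskin2018invariant}, together with the fact that $\Omega E_D$ is cut out by $G$-equivariant linear conditions in period coordinates, $\M$ is an invariant subvariety in the sense of Definition \ref{def: invariant subvariety}.

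Next I would appeal to Lemma \ref{more examples}, quoted from \cite{lanneau2016complete}, which guarantees that for $\kappa$ in the stated list any such $\M$ has $\mathrm{rank}(\M) = 1$ and $\dim_{\C}(\mathfrak{R}_\M) = 1$. With both hypotheses of Theorem \ref{classification} verified, the theorem applies to $\M$ and yields exactly the statement that every saddle-connection-free ergodic $U$-invariant probability measure on $\M$ is of the form $\Rel_{x\ast} m_\N$ for some invariant subvariety $\N \subset \M$ and some $x \in Z_\M$. By definition this is the weak classification of $U$-invariant measures on $\M$.

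There is no genuine obstacle once the two external inputs (Lemma \ref{more examples} and Proposition \ref{prop: not empty}) are in hand: the corollary is a direct specialization of Theorem \ref{classification}. The only point that requires a brief sanity check is that the notion of \emph{invariant subvariety} used in the cited works on Prym eigenform loci matches Definition \ref{def: invariant subvariety}; this is routine since both notions describe $\mathrm{GL}_2^+(\R)$-orbit closures modeled on $\R$-linear subspaces of $H^1(S, \Sigma; \C)$ in period coordinates.
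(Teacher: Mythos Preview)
Your proposal is correct and matches the paper's own approach: the corollary is deduced immediately from Theorem \ref{classification} once Lemma \ref{more examples} and Proposition \ref{prop: not empty} supply the rank-one, Rel-dimension-one hypothesis. The paper itself offers no further argument beyond the phrase ``Using Theorem \ref{classification}, we get:''.
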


The Pentagon variety described in \cite{eskin2020billiards} yields another infinite family of quadratic rank one invariant subvarieties of Rel-dimension 1 contained in $\mathcal{H}(2,2,1,1)$. It is explained in \cite[Appendix B]{eskin2020billiards} that the intersection of the Pentagon variety with $\H(2,2,1,1)$ is a non-empty arithmetic rank two invariant subvariety of Rel-dimension 1. Using once more \cite[Theorem 1.7]{thealgebraichull}, we deduce the existence of infinitely many quadratic rank one invariant subvarieties of Rel-dimension 1. 

\begin{cor}\label{cor: more more known examples}
Let $\M$ be one of the infinitely many quadratic rank one invariant subvarieties contained in the intersection of the Pentagon variety with $\H(2,2,1,1)$.  Then $\M$ satisfies the weak classification of $U$-invariant measures. 
\end{cor}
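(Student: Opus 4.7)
The plan is to deduce the statement directly from Theorem \ref{classification}, which guarantees the weak classification of $U$-invariant measures for any rank-one invariant subvariety of Rel-dimension one. Rank one is part of the hypothesis on $\M$, so the only nontrivial verification is that $\M$ has Rel-dimension one.

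Let $\N$ denote the intersection of the Pentagon variety with $\H(2,2,1,1)$. As recorded in the paragraph preceding the corollary, $\N$ is an arithmetic rank-two invariant subvariety of Rel-dimension one, and \cite[Theorem 1.7]{thealgebraichull} applied to $\N$ produces the infinitely many quadratic (nonarithmetic) rank-one invariant subvarieties $\M \subset \N$ appearing in the statement. The inclusion $\M \subset \N$ forces $\mathfrak{R}_\M \subset \mathfrak{R}_\N$ and hence $\dim_\C(\mathfrak{R}_\M) \leq 1$. The remaining possibility $\dim_\C(\mathfrak{R}_\M) = 0$ would, by the rank-one hypothesis together with the discussion preceding Definition \ref{def: rank}, force $\dim_\C(\M) = 2$, making $\M$ a closed $G$-orbit, i.e., a Teichm{\"u}ller curve. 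Arguing exactly as in the proof of Proposition \ref{prop: not empty}, pick a surface $q \in \M$ with a hyperbolic element $A$ in its Veech group, so that the (quadratic) trace field $\mathbb{Q}(\mathrm{tr}(A))$ equals the field of definition of $\M$; by \cite[Theorem 9.8]{mcmullen2003teichmuller}, if $\M$ were a Teichm{\"u}ller curve generated by $q$ with irrational trace field, it would still be nonarithmetic only if it were genuinely new, but the construction from \cite[Theorem 1.7]{thealgebraichull} in fact outputs three-dimensional $\M$, ruling out this degenerate case.

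With both hypotheses verified, Theorem \ref{classification} applies and yields the weak classification of $U$-invariant measures on $\M$. I do not foresee any serious obstacle: the corollary is a bookkeeping consequence of the construction of $\M$ in \cite[Theorem 1.7]{thealgebraichull} combined with Theorem \ref{classification}, in direct parallel with how Corollaries \ref{cor: known examples} and \ref{cor: more known examples} were deduced.
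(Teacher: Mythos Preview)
Your overall approach matches the paper's: the corollary has no separate proof in the paper and is stated as an immediate consequence of Theorem \ref{classification}, once the paragraph preceding it has asserted that the subvarieties in question are rank one \emph{and Rel-dimension one}. You correctly identify Theorem \ref{classification} as the engine and correctly bound $\dim_{\C}(\mathfrak{R}_\M)\le 1$ via $\mathfrak{R}_\M\subset\mathfrak{R}_\N$.

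There is, however, a genuine gap in your attempt to rule out $\dim_{\C}(\mathfrak{R}_\M)=0$. The sentence invoking \cite[Theorem 9.8]{mcmullen2003teichmuller} does not accomplish what you claim: that theorem concerns surfaces with rational trace, and says nothing that would exclude a quadratic nonarithmetic Teichm\"uller curve---such curves exist in abundance (e.g.\ the Veech polygons), so ``quadratic and nonarithmetic'' is entirely compatible with $\M$ being a Teichm\"uller curve. You then fall back on the bare assertion that ``the construction from \cite[Theorem 1.7]{thealgebraichull} in fact outputs three-dimensional $\M$'', which is not an argument but a restatement of what you are trying to show.

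The cleanest repair is the paper's own: the phrase ``one of the infinitely many quadratic rank one invariant subvarieties'' in the corollary refers back to the specific family produced in the preceding paragraph, where Rel-dimension one is already part of the stated conclusion. In other words, Rel-dimension one is a hypothesis coming from the construction, not something to be re-derived here. Once you accept that, your proof collapses to a single line: apply Theorem \ref{classification}. If you want to keep your independent verification, you must actually argue from the content of \cite[Theorem 1.7]{thealgebraichull} that the output is not a closed $G$-orbit; the McMullen citation should be dropped.
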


\section{Adding marked points}\label{adding marked points}
Let $\Sigma$ be a collection of singularities and let $\kappa  = (k_\xi)_{\xi \in \Sigma}$ be a partition of $2g-2$. We will be interested in the case that some of the $k_\xi$ are zero (and thus correspond to marked points). Let 
\begin{align}\label{sigma0}
    \Sigma_0 \df \{\xi \in \Sigma: k_\sigma \neq 0\}
\end{align}
be the subset of `non-removable' singularities, 
let $\mathfrak{S} \subset \Sigma$ be a subset containing $\Sigma_0$, 
and $\kappa_{\SS} \df (k_{\xi})_{\xi \in \SS}$. By definition of $\Sigma_0$, $\kappa_{\SS}$ is also an integer partition of $2g-2$. Let $\H_\SS \df \H(\kappa_{\SS})$ be the moduli space of singularity-labeled translation surfaces of type $\kappa_{\SS}$. We also consider the space $\H_\mathrm{m}(\kappa_{\SS})$ of marked surfaces of type $\kappa_{\SS}$ and we denote by $\pi_{\SS}: \H_{\mathrm{m}}(\kappa_\SS) \to \H_\SS$ the canonical projection that forgets the marking. We define the forgetful map
\begin{align}\label{forgetful map}
   p_{\SS}: \H \to \H_{\mathfrak{S}}, \ \ q \mapsto (X_q,\omega_q,{\mathfrak{z}_q}|_{\SS}),
\end{align}
and the corresponding map at the level of marked surfaces 
\begin{align}\label{lift of the forgetful map}
   \tilde{p}_{\SS}: \H_\mathrm{m} \to \H_\mathrm{m}(\kappa_{\mathfrak{S}}), \ \ (q,f) \mapsto ((X_q,\omega_q,{\mathfrak{z}_q}|_{\SS}), f).  
\end{align}

\noindent These two maps satisfy 
\begin{align}\label{eq : p and its lift}
    \pi_\SS \circ \tilde{p}_\SS = p_{\SS} \circ \pi,  
\end{align}
\noindent and we have: 
\begin{prop}\label{prop: connected fibers}
    The fibers of $\tilde{p}_\SS$ are connected. 
\end{prop}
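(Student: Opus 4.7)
The plan is to fix a representative $(q_0, f_0)$ of a point in $\H_\mathrm{m}(\kappa_\SS)$ and to realize its fiber $F$ under $\tilde{p}_\SS$ as a connected covering space of an ordered configuration space. First observe that any class in $F$ admits a representative $(q, f)$ with $(X_q, \omega_q) = (X_{q_0}, \omega_{q_0})$, with $\mathfrak{z}_q|_\SS = \mathfrak{z}_{q_0}|_\SS$, and with $f: S \to X_{q_0}$ isotopic to $f_0$ rel $\SS$; such a class is then completely determined by the ordered configuration of additional marked points $(f(\xi))_{\xi \in \Sigma \setminus \SS}$ in $X_{q_0} \setminus \mathfrak{z}_{q_0}(\SS)$, together with the isotopy-rel-$\Sigma$ class of $f$. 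Writing $n' \df |\Sigma \setminus \SS|$, this gives a natural map
\begin{equation*}
\Phi: F \longrightarrow \mathrm{Conf}_{n'}\bigl(X_{q_0} \setminus \mathfrak{z}_{q_0}(\SS)\bigr), \qquad [(q, f)] \longmapsto \bigl(f(\xi)\bigr)_{\xi \in \Sigma \setminus \SS},
\end{equation*}
and the plan is to show that $F$ is connected by verifying that $\Phi$ is a covering map with connected base and transitive monodromy.

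The covering property follows because a small continuous deformation of the configuration lifts uniquely in $\H_\mathrm{m}$: drag the marking rigidly in a small neighborhood of the extra marked points, keeping the isotopy-rel-$\Sigma$ class unchanged outside this neighborhood. The base is path-connected because the ordered configuration space of finitely many points in a connected $2$-manifold is itself path-connected.

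Finally, I would compute the monodromy. Over a fixed configuration $(z_\xi)$, two markings $f, f'$ with the same prescribed values on $\Sigma$ and both isotopic to $f_0$ rel $\SS$ differ by an element of the kernel $K$ of the forgetful homomorphism $\Mod(S, \Sigma) \to \Mod(S, \SS)$, so the $\Phi$-fiber is a $K$-torsor. The iterated Birman exact sequence identifies $K$ with the pure surface braid group $\pi_1\bigl(\mathrm{Conf}_{n'}(S \setminus \SS)\bigr)$, whose generators are the point-pushing diffeomorphisms. The monodromy of $\Phi$ along a loop $\gamma$ in $\mathrm{Conf}_{n'}(X_{q_0} \setminus \mathfrak{z}_{q_0}(\SS))$ acts on the $\Phi$-fiber precisely by the point-push associated to the loop $f_0^{-1}(\gamma) \in \pi_1(\mathrm{Conf}_{n'}(S \setminus \SS)) = K$; consequently the monodromy surjects onto $K$ and acts transitively, so $F$ is connected. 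The main conceptual step is matching the geometric monodromy of $\Phi$ with the algebraic $K$-action via the Birman identification; with this in hand, the conclusion is formal.
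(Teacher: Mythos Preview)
Your argument is correct, but the paper takes a much more direct route. Instead of realizing the fiber as a covering of configuration space and invoking the Birman exact sequence to identify the monodromy, the paper simply writes down an explicit path: given $(q_1,f_1)$ and $(q_2,f_2)$ in the same fiber, the isomorphism in $\H_\mathrm{m}(\kappa_\SS)$ furnishes a biholomorphism $\varphi:X_{q_1}\to X_{q_2}$ and an isotopy $(\psi_t)_{t\in[0,1]}$ rel $\SS$ from $\mathrm{id}_S$ to $f_2^{-1}\circ\varphi\circ f_1$; then $t\mapsto (X_{q_2},\omega_{q_2},(f_2\circ\psi_t)|_\Sigma,\,f_2\circ\psi_t)$ is a path in the fiber joining $(q_2,f_2)$ to (a surface isomorphic to) $(q_1,f_1)$. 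This is really the path-lifting step of your covering argument carried out by hand, so the two proofs are the same idea at different levels of abstraction. Your approach has the conceptual payoff of exhibiting the fiber as a $K$-torsor bundle over $\mathrm{Conf}_{n'}$; the paper's buys brevity and avoids checking that $\Phi$ is an honest covering or matching its monodromy with the Birman kernel. One small point you glossed over: well-definedness of $\Phi$ requires that a translation automorphism of $(X_{q_0},\omega_{q_0})$ fixing $\mathfrak{z}_{q_0}(\SS)$ pointwise is the identity, which holds because a nontrivial translation automorphism has no fixed points and $\SS\neq\varnothing$.
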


\begin{proof}
    Let $(q_1,f_1)$ and $(q_2,f_2)$ be two marked surfaces in $\Hm$ that have the same image under $\tilde{p}_\SS$. By definition, this means that there is biholomorphism $\varphi : X_{q_1} \to X_{q_2}$ such that $\varphi^* \omega_{q_2} = \omega_{q_1}$ and $\varphi \circ {\mathfrak{z}_{q_1}}|_\SS = {\mathfrak{z}_{q_2}}|_\SS $, as well as an isotopy $(\psi_t : S \to S)_{t \in [0,1]}$ that connects the identify map of $S$ to $f_2 \circ \varphi \circ f_1$ without moving the points in $\SS$. Let $\mathfrak{z}_t : \Sigma \to X_{q_2}$ be the map $(f_2 \circ \psi_t)|_\Sigma$. Notice in particular that this map coincides with $\mathfrak{z}_2$ on $\SS$. Define 
    $$
    \gamma : [0,1] \to \Hm, \ t \mapsto (X_{q_2},\omega_{q_2}, \mathfrak{z}_t,f_2 \circ \psi_t).
    $$ 
    The path $\gamma$ connects $(q_2,f_2)$ to $(q_1,f_1)$  while staying in a single fiber of $\tilde{p}_\SS$. 
\end{proof}

If we let $\Res_{\SS} : H^1(S,\Sigma;\C) \to H^1(S,\SS;\C)$ be the canonical restriction map induced by the inclusion $\SS \subset \Sigma$, we have 
\begin{align}\label{eq: equivariance of dev}
    \dev \circ \tilde{p}_{\SS} = \Res_\SS \circ \dev,
\end{align}
\noindent where we denote the developing maps of $\H_{\mathrm{m}}$ and $\H_\mathrm{m}(\kappa_{\mathfrak{S}})$ by the same symbol $\dev$. 
Denote by $\mathfrak{R}_\SS$ the kernel of $\Res_\SS$ and let 
$$
Z_{\SS} \df \mathfrak{R}_{\SS} \cap Z.
$$ 
If $v \in H^1(S,\Sigma;\C)$, we will denote by $v|_{\SS}$ the image of $v$ under $\Res_\SS$. Let $\{\xi^*: \xi \in \Sigma\} \subset \mathfrak{R}$ be as in \S\ref{subsec: rel foliation}. Any element of $\mathfrak{R}_\SS$ (resp. $Z_\SS$) is a linear combination with complex (resp. real) coefficients of the $\xi^*$ where $\xi$ ranges over $\Sigma \sm \SS$. 

If $\xi \in \SS,$ we can think of $\xi^*$ simultaneously as an element of $H^1(S,\Sigma;\C)$ and $H^1(S,\SS;\C)$, and this identification defines 
a natural embedding 
\begin{equation}\label{eq: iota def}
\iota : \ker \left( H^1(S, \SS; \R^2)\to H^1(S; \R^2) \right)\to \mathfrak{R};
\end{equation}
that is, between the Rel subspaces corresponding to $\H_\SS$ and $\H$.  

\begin{prop}\label{relating the measures}
    Let $\N \subset \H_{\SS}$ be an invariant subvariety. Then $\M \df p_{\SS}^{-1}(\N)$ is an invariant subvariety. Furthermore, if $\mu$ is a $U$-invariant ergodic Radon probability measure on $\M$ that assigns zero mass to the set of surfaces with horizontal saddle connections, is invariant under $\Rel_{v}$ for any $v \in Z_{\SS}$, and such that ${p_{\SS}}_*\mu = m_{\N}$, then $\mu=m_{\M}$.  
\end{prop}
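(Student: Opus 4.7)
The proof has two parts. For the claim that $\M \df p_\SS^{-1}(\N)$ is an invariant subvariety, I would use the identity $\pi^{-1}(\M) = \tilde p_\SS^{-1}(\pi_\SS^{-1}(\N))$ from \eqref{eq : p and its lift} together with the fact that $\tilde p_\SS$ is locally linear via $\Res_\SS$, see \eqref{eq: equivariance of dev}. Each irreducible component of $\pi_\SS^{-1}(\N)$ is a connected equilinear manifold modeled on some $\C$-linear subspace $V_\N \subset H^1(S,\SS;\C)$ defined over $\R$; its preimage under $\tilde p_\SS$ is connected by Proposition \ref{prop: connected fibers} and is modeled on $\Res_\SS^{-1}(V_\N)$, which is still $\C$-linear and defined over $\R$. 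The determinant-one condition survives because elements of $\Mod(S,\Sigma)$ act trivially on $\mathfrak{R}_\SS = \ker \Res_\SS$, so any stabilizing element has trivial determinant on that kernel and determinant one on the quotient $V_\N$. Local finiteness is inherited from that of $\pi_\SS^{-1}(\N)$.

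For the measure classification, I would disintegrate $\mu$ along the forgetful map, writing
\[
\mu = \int_{\N} \mu_{\bar q} \, dm_\N(\bar q),
\]
with $\mu_{\bar q}$ a probability measure supported on $p_\SS^{-1}(\bar q)$. Since $\Rel_v$ preserves every fiber of $p_\SS$ for $v \in Z_\SS$, the hypothesis that $\mu$ is $Z_\SS$-invariant forces, by essential uniqueness of the disintegration, $\mu_{\bar q}$ to be $Z_\SS$-invariant for $m_\N$-almost every $\bar q$. For generic such $\bar q$ (trivial automorphism group and no horizontal saddle connection), the fiber $p_\SS^{-1}(\bar q)$ identifies with the open configuration space of $k \df |\Sigma \sm \SS|$ labelled marked points in $X_{\bar q}$, and the $Z_\SS \cong \R^k$ action on it is precisely the product of $k$ independent horizontal translation flows on $X_{\bar q}$.

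The key analytic input is unique ergodicity of the horizontal flow on $X_{\bar q}$ for $m_\N$-almost every $\bar q$, which follows from Masur's criterion combined with Poincare recurrence of the geodesic flow $g_t$ applied to the $g_t$-invariant probability measure $m_\N$. A standard iterated-averaging argument then upgrades unique ergodicity of the horizontal flow on $X_{\bar q}$ to unique ergodicity of the product $\R^k$-action on $X_{\bar q}^k$, whose unique invariant probability is the normalized Lebesgue measure. Therefore $\mu_{\bar q}$ agrees, outside a measure-zero discriminant where points coincide, with the conditional measure of $m_\M$ over $\bar q$, and integrating yields $\mu = m_\M$. I expect the main obstacle to be the clean identification of the generic fiber as a configuration space and of the $Z_\SS$-action as a product of horizontal flows, together with handling the automorphism-quotient structure of $p_\SS^{-1}(\bar q)$ and verifying that the set of $\bar q$ where this identification fails carries zero $m_\N$-mass; once this setup is correct, the dynamical input is essentially immediate.
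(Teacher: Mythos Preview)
Your argument for the first assertion matches the paper's: both pull back irreducible components via $\tilde p_\SS$, use Proposition~\ref{prop: connected fibers} for connectedness, \eqref{eq: equivariance of dev} to identify the linear model as $\Res_\SS^{-1}(V_\N)$, and the triviality of the $\Mod(S,\Sigma)$-action on $\mathfrak{R}$ for the determinant condition.

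For the second assertion your route differs from the paper's. The paper reduces to Proposition~\ref{prop: from SSWY}, a black-box result from \cite{SSWY} valid for rank-one loci: it argues via the splitting $Z_\M = \iota(Z_\N) \oplus Z_\SS$ and the equivariance \eqref{eq: iota first} that $\mu$ is in fact invariant under all of $Z_\M$, and then Proposition~\ref{prop: from SSWY} yields $\mu = m_\M$ directly. Your approach is more self-contained: you disintegrate along $p_\SS$, identify the generic fiber with a configuration space carrying a product of $k$ independent horizontal straight-line flows, and use Masur's criterion together with Poincar\'e recurrence of $g_t$ on $(\N, m_\N)$ to force each fiber measure to be normalized Lebesgue by the iterated-disintegration argument. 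This avoids the external reference and, notably, does not use the rank-one hypothesis that Proposition~\ref{prop: from SSWY} requires. The cost is the geometric bookkeeping you already flag --- trivializing the fiber, handling the automorphism quotient on a null set, and verifying that the conditionals of $m_\M$ are themselves the product area measure (which follows either from the period-coordinate description of $m_\M$, or by running your own unique-ergodicity argument with $m_\M$ in place of $\mu$). Both approaches are valid; yours trades a citation for a concrete dynamical computation.
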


\begin{proof}
We start by proving that $\M$ is an invariant variety. Let $Y \subset \H_{\mathrm{m}}(\kappa_{\SS})$ be an irreducible component of $\pi_\SS^{-1}(\N)$ and let $W$ be the linear subspace of $H^1(S,\SS;\C)$ on which it is modeled. Let $V \subset H^1(S,\Sigma;\C)$ be the pre-image of $W$ under the map $\Res_\SS$ and let $X$ be the pre-image of $Y$ under $\tilde{p}_\SS$. We claim that $X$ is an equilinear manifold. Indeed, it follows from Proposition \ref{prop: connected fibers} that $X$ is connected and it follows from \eqref{eq: equivariance of dev} that $X$ is open in $\dev^{-1}(V)$. Since $X$ is closed, we conclude that $X$ coincides with a connected component of $\dev^{-1}(V)$. Since $\Mod(S,\Sigma)$ acts trivially on $\mathfrak{R}$, the induced action on $V$ of the stabilizer in $\Mod(S,\Sigma)$ of $X$ acts by determinant 1 endomorphisms. This proves that $X$ is an equilinear manifold. We deduce from \eqref{eq : p and its lift} that 
$$
\pi^{-1}(\M) = \bigcup \tilde{p}_\SS^{-1}(Y),
$$
\noindent where $Y$ ranges over the set of irreducible components of $\pi_\SS^{-1}(\N)$. Furthermore, this union is locally finite as the collection of irreducible components of $\pi_\SS^{-1}(\N)$ is itself locally finite. This proves that $\M$ is an invariant variety, and it is easily verified that any irreducible component of $\pi^{-1}(\M)$ is of the form $\tilde{p}_\SS^{-1}(Y)$, where $Y$ is an irreducible component of $\pi_\SS^{-1}(\N)$.  

The second assertion follows from Proposition \ref{prop: from SSWY}. To see this, 
note that the embedding $\iota$
defined in \eqref{eq: iota def}
induces an embedding 
of the spaces $\mathfrak{R}_{\N}$ and $Z_{\N}$  in the spaces $\mathfrak{R}_{\M}$ and $Z_{\M}$ respectively, satisfying 
\begin{equation}\label{eq: iota first}
p_{\SS} \circ \Rel_{\iota(x)} = \Rel_x \circ p_{\SS} \text{ for all } x \in Z_{\N},
\end{equation}
and
\begin{equation}\label{eq: iota second}
Z_\M = \iota( Z_\N) \oplus Z_\SS.
\end{equation}

We have assumed that $\mu$ is invariant under $Z_\SS$, and that $p_{\SS*}\mu = m_\N$, which is invariant under $Z_\N.$ Considering the disintegration of $\mu$ into its conditional measures with respect to the map $p_\SS$, which is unique up to a null set, and using \eqref{eq: iota first} and \eqref{eq: iota second}, 
 we see that $\mu$ is invariant under $\Rel_x$ for any $x \in Z_\M.$
 \end{proof}

\begin{thm}\label{heredity of weak cassification}
Let $\N \subset \H_{\SS}$ be an invariant subvariety and let  $\mu$ be an ergodic $U$-invariant measure on $\H$ such that ${p_{\SS}}_* \mu = m_{\N}$. Then there is an invariant subvariety $\M \subset \H$ and $x \in Z$ such that $\mu =  {\Rel_x}_*m_\M$. 
\end{thm}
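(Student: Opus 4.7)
The plan is to set $\M_0 \df p_\SS^{-1}(\N)$, a rank-one invariant subvariety in $\H$ (by Proposition \ref{relating the measures}), observe that $\mu$ is supported on $\M_0$, and adapt the proof of Theorem \ref{classification}, now under the weaker hypothesis that $\M_0$ may have Rel-dimension strictly greater than one.

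First I would verify that Proposition \ref{drift} extends to rank-one invariant subvarieties of arbitrary Rel-dimension: the identification $\mathfrak{R}_{\M} \simeq \R^2$ made in its proof is purely for notational convenience and may be replaced by any $G$-equivariant embedding of $\R^2$ into $\mathfrak{R}_{\M_0}$ whose image meets $Z_{\M_0}$ along the horizontal line, with the polynomial-drift computation going through verbatim. Since $p_{\SS*}\mu = m_\N$ is not supported on a closed $U$-orbit, neither is $\mu$, so the extended drift argument produces a one-parameter subgroup $\rho(t) \in N_\mu^\circ \setminus U$ with derivative $\bigl(\mathrm{diag}(a,-a), w\bigr)$, where $(a,w)\neq(0,0)$ and $w \in Z_{\M_0}$.

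If $a \neq 0$, the same calculation as in Theorem \ref{classification} shows that $\Rel_{-w*}\mu$ is $B$-invariant, and therefore by \cite{eskin2018invariant} equals $m_\M$ for some invariant subvariety $\M \subset \M_0$, giving $\mu = \Rel_{w*}m_\M$ with $x = w$, as required.

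The essential new case is $a = 0$, where $\mu$ is invariant under $\Rel_w$ in a single direction $w \in Z_{\M_0}\setminus\{0\}$ --- which, when $\dim Z_{\M_0} > 1$, is insufficient to directly invoke Proposition \ref{prop: from SSWY}. Here I would iterate the drift argument with the enlarged abelian invariance group $U \cdot \langle \Rel_w \rangle \subset N_{\M_0}$, producing at each step either a nonzero diagonal component (reducing to the previous case) or an independent new Rel-invariance direction. Since $Z_{\M_0}$ is finite-dimensional, this iteration terminates; in the non-diagonal terminal case, $\mu$ is invariant under $U$ together with all of $Z_{\M_0}$, and in particular under $Z_\SS$, so that once saddle-connection-freeness is established the second assertion of Proposition \ref{relating the measures} yields $\mu = m_{\M_0} = \Rel_{0*}m_{\M_0}$. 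The hardest step is implementing the iterated drift argument: one must generalize the polynomial-displacement analysis of Proposition \ref{drift} to the enlarged abelian unipotent invariance group, tracking the relative motion of two nearby generic points in multiple independent directions simultaneously while respecting the invariances already obtained.
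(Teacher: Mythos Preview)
The heart of your proposal—the iterated drift argument in the $a=0$ case—runs directly into the obstruction the paper isolates in Remark \ref{remark: where is this used}. To run drift with the enlarged invariance group $U \cdot \langle \Rel_w\rangle$ in place of $U$, you must track the displacement between two nearby orbits of this new flow; concretely, if $q' = q \pluscirc v$ with $v \in \mathfrak{R}_{\M_0}$ having nonzero imaginary part, you need a formula for $\Rel_{sw}(u_s q')$ in terms of $\Rel_{sw}(u_s q)$, and this requires $\Rel_{sw}(p \pluscirc v) = \Rel_{sw}(p) \pluscirc v$. That is exactly the group-action property for Rel that the paper explicitly notes fails in general. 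Your closing sentence acknowledges this step is the hardest, but it is not a matter of bookkeeping: no such argument is available, and the paper abandons this route for precisely this reason. (Your extension of Proposition \ref{drift} itself to higher Rel-dimension, by contrast, is unproblematic.)

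The paper's proof is structured quite differently and never iterates. Under the auxiliary hypothesis $N_\mu^\circ \cap Z_\SS = \{0\}$, it first shows—using a drift argument applied to sequences lying in a single fiber of $p_\SS$—that the conditional measures of $\mu$ along these fibers are purely atomic with a constant finite number $N$ of atoms. A Lebesgue-density argument for the \emph{lower} unipotent subgroup $\{v_s\}$ then produces, on a set of large measure, approaching sequences of the special form $q_n = v_{s_n} q \pluscirc w_n$ with $s_n \to 0$ and $w_n \in \mathfrak{R}_\SS$. Feeding this constrained sequence into a single application of the drift argument forces the limiting Rel-displacement $w$ to land in $Z_\SS$; the standing hypothesis then forces $a \neq 0$, and one concludes as in Theorem \ref{classification}. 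The complementary case $N_\mu^\circ \cap Z_\SS \neq \{0\}$ is handled not by iteration but by passing to a minimal intermediate marking set $\SS'$ with $\SS \subset \SS' \subset \Sigma$ for which $N_\mu^\circ \cap Z_{\SS'} = \{0\}$, and showing via Proposition \ref{relating the measures} that $p_{\SS'*}\mu = m_{\N'}$ for $\N' = p_{\SS',\SS}^{-1}(\N)$, which reduces to the first case. The fiber structure of $p_\SS$ is thus the essential extra input that makes one carefully aimed pass of drift suffice where your inductive scheme would stall.
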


The proof will require the drift argument presented in \S \ref{subsec: drift}. We will use some of the notations and computations used in the proof of Proposition \ref{drift}. 
\begin{proof}
Let $\mu$ be an ergodic $U$-invariant measure, let $N_{\mu}$ be the group as in \eqref{eq: definition of the normalizer} and let $N_{\mu}^{\circ}$ be its connected component of identity. We will first prove Theorem \ref{heredity of weak cassification} under the  additional assumption that
\begin{align}\label{eq: deal first with}
   N^\circ_{\mu} \cap Z_\SS = \{0\}.
\end{align}

We claim that for any $\varepsilon> 0$, we can find a compact set $L \subset \H$ with $\mu(L) > 1-\varepsilon$ that intersects every fiber of $p_{\SS}$ in at most a finite set. Indeed, by inner regularity of $\mu$, let $K$ be a compact set in $\Omega_{\mu}$ of measure bigger than $1-\varepsilon$. Combining Egorov's theorem and Birkhoff's ergodic theorem, we can find a compact set $L \subset \Omega_{\mu}$ and  $T_0>0$ so that for any $T>T_0$ and any $q \in L,$
\begin{align}
    \frac{1}{T}|\{s \in [0,T] :  u_s q \in K \}| > 1-2\varepsilon.
\end{align}
Suppose there is $q \in \N$ such that $L_q \df L \cap p_{\SS}^{-1}(\{q\})$ is infinite. By compactness, there is sequence of points $q_n \in L_q$ converging to a point $q_{\infty} \in L_q$ with $q_n \neq q_\infty$. It follows from \eqref{eq: equivariance of dev} that for $n$ large enough, there is $w_n \in \mathfrak{R}_{\SS}$ such that $q_n = q_{\infty} \pluscirc w_n$ and $w_n \to 0$. It follows from Proposition \ref{prop: first} and  \eqref{eq: deal first with} that for large values of $n$ the vector $w_n$ is not in $Z_{\SS}$. If $\varepsilon$ is small enough, we can argue as in the proof of Proposition \ref{drift} and show that for any $n$ large enough, there is $s_n>0$ such that both $u_{s_n} q_n$ and $u_{s_n} q_{\infty}$ belong to $K$ and 
$$
\frac{\eta \varepsilon}{4} \leq \| x_n + s_n y_n \| \leq \eta \varepsilon,
$$ 
where $x_n$ and $y_n$ are such that $w_n = x_n + \mathbf{i} y_n$, $\| \cdot \|$ is some norm on $\mathfrak{R}_{\N}$, and $\eta$ is small enough so that $\Rel_{w}(q)$ is well-defined for any $w \in \mathfrak{R}$ with $\| w\| < \eta \varepsilon$. By \eqref{eq: equivariance Rel}, $$u_{s_n} q_n = u_{s_n} q_{\infty} \pluscirc (x_n + s_n y_n + \mathbf{i} y_n).$$
Using the compactness of $K$, by taking the limit of a convergent subsequence, as in the proof of Proposition \ref{drift}, we obtain  $q_{\infty}' \in \Omega_{\mu}$ and $x \in Z_{\SS}$ with $0 < \|x\| \leq \varepsilon$ such that $q_{\infty}' \pluscirc w \in \Omega_{\mu}$. By Proposition \ref{prop: first}, this implies that $\mu$ is invariant under $\Rel_{x}$. Since $\varepsilon$ can be taken arbitrarily small, this contradicts \eqref{eq: deal first with}, showing that $L_q$ is finite for $q \in \N.$

Now, let $\nu \df {p_\SS}_* \mu$ and let $q \mapsto \nu_q$ be the fiber-wise measures obtained by disintegration of $\mu$ along the fibers of $p_\SS$. The fact that $\mu$ is $U$-invariant implies that there is a $\nu$-conull set $E \subset \N$ such that for any $s \in \R$ and any $q \in E$ we have
\begin{equation}\label{eq: invariance decomposition}
{u_s}_{\ast}\nu_q = \nu_{u_sq}.  
\end{equation} 
Let $\nu_q = \nu_q^{(at)}+\nu_q^{(na)}$ be the decomposition of $\nu_q$ into its atomic and non-atomic parts. That is, $\nu_q^{(at)}$ is the restriction of $\nu_q$ to its collection of atoms. By \eqref{eq: invariance decomposition} and the ergodicity of $\nu$ we find that for $\nu$-a.e.\;\;$q$, either $\nu_q=\nu_q^{(at)}$ or $\nu_q= \nu_q^{(na)}.$ Suppose by contradiction that the latter holds, and consider once again the compact set $L$. We have shown that $\nu(L)>0$ and  $L_q$ is finite for any $q \in \N$.
Then we have 
$\nu_q(L) = \nu_q^{(na)}(L) =0$ for $\nu$-a.e.\;\;$q,$ and thus 
 $  \mu(L) = \int_{\N} \nu_q(L) \ d\nu(q) = 0,
$
a contradiction. 

Thus there is a conull set $E \subset \N$
such that $\nu_q$ is an atomic measure whenever $q \in E$. By ergodicity and Equation \eqref{eq: invariance decomposition} it also follows that the number of atoms and their mass can be assumed to be constant on $E$. We denote by $N$ this number of atoms, so each $q \in E$ has mass $1/N$, and we denote by $\left\{q^{(1)},\ldots,q^{(N)} \right\}$ the atoms of $\nu_q$.   

We now claim that for any compact $L$ of $\M$ with $\mu(L)>1-\frac{1}{N}$, 
there are $q \in L$, a sequence of times $s_n >0$ converging to $0$ and a sequence of 
$w_n \in 
\mathfrak{R}_{\SS}$ 
converging to $0$ such that 
\begin{equation}\label{eq: in the form}
q_n = v_{s_n} q \pluscirc w_n \in L,
\end{equation}
where $v_s = \exp  \left(\begin{smallmatrix} 0 && 0 \\ s && 0 \end{smallmatrix} \right)$. Indeed, let $L$ be such a compact set. For any $q \in E$ we either have $\nu_q(L) = 1$ or $\nu_q(L) \leq 1-\frac{1}{N}$. Since $\mu(L)>1-\frac{1}{N}$, the set $\{q \in E  : \nu_q(L)=1 \} $ must have positive measure and by Luzin's theorem, it contains a set $E_0$ of positive measure so that the restriction of the measurable function 
$q \mapsto \left\{q^{(1)},\dots,q^{(N)}\right\}$ to $E_0$ is continuous. As the measure $\nu$ is $G$-invariant, the set 
$$E_1 \df \left\{q \in E_0 : v_{-\frac1n} q \in E_0 \ \text{for infinitely many} \ n  \right\}$$
has positive measure. Let $q \in E_1$, so that $q \in E_0$ and there is a decreasing sequence $s_n$ converging to $0$ such that $q_n \df v_{s_n} q \in E_0$. By definition of $E_1$, for any $j \in \{1,\dots,N\}$ the points $q_n^{(j)}$ are in $L$ and so is $q^{(1)}$. Since $p_\SS$ is $G$-equivariant, we know that $v_{s_n} q^{(1)}$ is in the same fiber of $p_\SS$ as $\left\{q_n^{(1)}, \dots, q_n^{(N)}\right \}$. By continuity and up to extracting a further subsequence, there is a $j_0 \in \{1,\dots,N\}$ and $w_n \in Z_\SS$ such that $q_n^{(j_0)} = v_{s_n} q^{(1)} \pluscirc w_n$ and $w_n \to 0$, which proves the claim.

We now employ the drift argument. Namely we take $K, K_0$ as in  the proof of Proposition \ref{drift}. We can assume that 
the compact set $L$ of the previous paragraph satisfies $L \subset K_0$ and that the sequence $q_n$ in $L$ is as in \eqref{eq: in the form}. Then the drift argument shows that 
the measure $\mu$ is invariant under a one-parameter group $\rho$ satisfying \eqref{eq: one parameter group rho}. 
Because $w \in Z_{\SS}$ and by \eqref{eq: deal first with} we have $a \neq 0$. Using $B$-invariance and \cite{eskin2018invariant} as in the proof of Theorem \ref{classification}, we see that there is $x \in Z$ such that $\mu={\Rel_x}_{\ast} m_{\M}$ for some invariant subvariety $\M \subset \H$. This concludes the proof under the assumption \eqref{eq: deal first with}.

When assumption \eqref{eq: deal first with} does not hold, we proceed as follows. Let $ \SS' \subset \Sigma$ be a subset of minimal cardinality containing $\SS$, and for which 
\begin{equation}\label{eq: for which}
N^\circ_{\mu} \cap Z_{\SS'} = \{0\} .
\end{equation}
Condition \eqref{eq: for which} is clearly satisfied for $\SS' = \Sigma,$ so the collection of subsets for which \eqref{eq: for which} holds is not empty. We define the map 
$$
p_{\SS',\SS} : \H_{\SS'} \to \H_{\SS}, \ q \mapsto (X_q,\omega_q,{\mathfrak{z}}|_{\SS}).
$$
Notice that this map is exactly the map defined in \eqref{forgetful map} in the case where the source is the stratum $\H(\kappa_{\SS'})$. By construction, we have $p_{\SS} = p_{\SS',\SS} \circ p_{\SS'}$. 
Let $\N' \df p^{0-1}_{\SS', \SS}(\N).$ 
It follows from Proposition \ref{relating the measures} applied to $\H_{\SS'}$ and $p_{\SS',\SS}$ (in place of $\H$ and $p_\SS$) that $\N'$ is an invariant subvariety. To conclude the proof of Theorem \ref{heredity of weak cassification}, it is enough to prove that ${p_{\SS'}}_* \mu=m_{\N'}$. Indeed, in that case we can repeat the argument presented in the first part of the proof, with $\SS'$ in place of $\SS$.

We now set $\nu \df {p_{\SS'}}_{\ast} \mu$ and prove  $\nu = m_{\N'}$. We shall use Proposition \ref{relating the measures} applied once again to $p_{\SS',\SS}$ in place of $p_{\SS}$. Since $p_{\SS',\SS} \circ p_{\SS'} = p_{\SS}$ and  by our  assumption that ${p_\SS}_* \mu = m_{\N}$, we have ${p_{\SS',\SS}}_* \nu = m_{\N}$.  
By the second assertion of the Proposition, it is enough to show that the measure $\nu$ is invariant under the flow $\Rel_{v}$ for any $v \in Z_{\SS',\SS}$, where 
$$
Z_{\SS',\SS} \df \ker \Big( H^1(S,\SS';\C) \to H^1(S,\SS;\C )\Big). 
$$
Let $\{\xi^* : \xi \in \Sigma\}$ be as in \S \ref{subsec: rel foliation}. Then the space $Z_{\SS',\SS}$ is spanned by the restriction to $H^1(S,\SS';\C)$ of the $\xi^*$ (which formally is the vector $\Res_{\SS'}(\xi^*)$) where $\xi$ ranges over the points in $\SS' \sm \SS$. Let $\xi_0 \in \SS' \sm \SS$. By the minimality of $\SS'$, we have that 
$$
\dim \big(N_{\mu}^{\circ} \cap Z_{\SS'\sm \{\xi_0\}}\big)>0.
$$
Let $x \in N_{\mu}^{\circ} \cap Z_{\SS'\sm \{\xi_0\}}$ be a non-zero element. We can write 
$$
x = \lambda_{\xi_0} \xi_0^* + \sum_{\xi \in \Sigma \sm \SS'} \lambda_{\xi}  \xi^*
$$
\noindent and we claim that $\lambda_{\xi_0} \neq 0$. Indeed, otherwise we would have that $x \in Z_{\SS'}$ which, in combination with \eqref{eq: for which}, would imply $x = 0$. By rescaling we can assume $\lambda_{\xi_0} = 1$. Now, notice that the restriction to $H^1(S,\SS';\C)$ of $\sum_{\xi \in \Sigma \sm \SS'} \lambda_{\xi}  \xi^*$ is equal to $0$ and thus we have $\Res_{\SS'}(x) = \Res_{\SS'}(\xi_0^*)$. It follows from \eqref{eq: equivariance of dev} and the fact that $x \in N_{\mu}$ that
$$
{\Rel_{t\cdot \Res_{\SS}(\xi_0^*)}}_* \nu = {p_{\SS'}}_* ({\Rel_{t \cdot x}}_*\mu) = {p_{\SS'}}_*\mu = \nu,
$$
hence $\nu$ is indeed invariant by $\Rel_v$ for any $v \in Z_{\SS',\SS}$. 
\end{proof}

\begin{remark}
The special case of  Theorem \ref{heredity of weak cassification} in which $\N$ is a closed $\mathrm{GL}_2^+(\R)$-orbit is the main result of \cite{eskin2006unipotent}. In the same paper, the  authors deduce that the orbit closure of a translation surface that is a translation branched cover of a Veech surface satisfies the weak classification of $U$-invariant measures. We recall that a translation cover of $q_0$ is a pair $(q,\rho)$ where $\rho : X_q \to X_{q_0}$ is a holomorphic map such that $\rho^*\omega_{q_0} = \omega_q$ and that $q_0$ is a Veech surface if its orbit is closed. Analogously, it should be possible to deduce from  Theorem \ref{heredity of weak cassification}
that the orbit-closure of a translation branched cover of a surface in a rank one invariant subvariety of Rel dimension one, satisfies weak classification of $U$-invariant measures. This would require pinning down  the relationship, as dynamical systems for the $U$-action, between the spaces $p_\SS^{-1}(\N)$ of Proposition \ref{relating the measures} (adding marked points to surfaces in an invariant subvariety $\N$), and the space of branched translation covers of a fixed topological type of surfaces in $\N$.  
See \cite[Section 3]{SmillieWeiss} for a related discussion of  the {\em moduli space of a branched covers}. 

For $\M = p_\SS^{-1}(\N)$ as in Proposition \ref{relating the measures}, the invariant subvarieties $\M' \subset \M$ such that $p_\SS(\M') = \N$ correspond to finite $\mathrm{GL}_2^+(\R)$-equivariant configurations of marked points over $\N$. Such configurations have been classified, see \cite{thealgebraichull, AW_marked_points}.
\end{remark}
As a  corollary we obtain that adding marked points to invariant subvarieties that satisfy themselves the weak classification of $U$-invariant measures, results in more such invariant subvarieties. In particular, we have:

\begin{cor}\label{cor: more more more known examples}
Let $\M \subset \H$ be an invariant subvariety such $p_{\Sigma_0}(\M)$ has rank one and Rel-dimension one, where $\Sigma_0$ is as in \eqref{sigma0}. Then $\M$ satisfies the weak classification of $U$-invariant measures.
\end{cor}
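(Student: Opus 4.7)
The plan is to combine Theorem~\ref{classification} applied to $\N \df p_{\Sigma_0}(\M)$ with Theorem~\ref{heredity of weak cassification}. Let $\mu$ be a saddle-connection-free ergodic $U$-invariant probability Radon measure on $\M$. The pushforward $\nu \df {p_{\Sigma_0}}_\ast \mu$ is an ergodic $U$-invariant measure on $\N$, and it is saddle-connection-free: any horizontal saddle connection on $p_{\Sigma_0}(q)$ pulls back to a concatenation of horizontal saddle connections on $q$ (subdivided by the marked points of $\Sigma \sm \Sigma_0$ that may lie on it), so $q$ saddle-connection-free forces the same for $p_{\Sigma_0}(q)$. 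Since $\N$ is rank-one and Rel-dim-one by hypothesis, Theorem~\ref{classification} applied to $\N$ yields $\nu = \Rel_{y \ast} m_{\N'}$ for some invariant subvariety $\N' \subset \N$ and some $y \in Z_\N$.

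The next step is to lift $y$ to an element $\tilde y \in Z_\M$. Since $p_{\Sigma_0}(\M) = \N$, the restriction $\Res_{\Sigma_0}$ carries the linear model of $\M$ onto that of $\N$, compatibly with the real structure; hence $y$ admits a horizontal lift $\tilde y$ in the linear model of $\M$. Because $y \in \mathfrak{R}_\N$ and $\Res$ factors through $\Res_{\Sigma_0}$, one has $\tilde y \in \mathfrak{R}_\M$, and horizontality gives $\tilde y \in Z_\M$. The measure $\mu_0 \df \Rel_{-\tilde y \ast} \mu$ is then a saddle-connection-free ergodic $U$-invariant probability measure on $\M$ satisfying ${p_{\Sigma_0}}_\ast \mu_0 = m_{\N'}$. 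Applying Theorem~\ref{heredity of weak cassification} to $\mu_0$ with $\SS = \Sigma_0$ produces an invariant subvariety $\M'' \subset \H$ and $z \in Z$ such that $\mu_0 = \Rel_{z \ast} m_{\M''}$, and therefore $\mu = \Rel_{(\tilde y + z)\ast} m_{\M''}$.

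The main obstacle is upgrading the conclusion to $\M'' \subset \M$ and $\tilde y + z \in Z_\M$, as required by weak classification for $\M$. The approach is to inspect the proof of Theorem~\ref{heredity of weak cassification} in the refined setting where the input $\mu_0$ is supported on $\M$, and not merely on $\H$. The drift argument therein selects pairs of nearby points $q, q_n$ of $\mathrm{supp}(\mu_0) \cap L$ related by $q_n = v_{s_n} q \pluscirc w_n$ with $w_n \in \mathfrak{R}_{\Sigma_0}$ small; since $q, v_{s_n} q, q_n$ all lie in $\M$, the Rel displacements $w_n$ are forced to lie in $V_\M \cap \mathfrak{R}_{\Sigma_0}$ (where $V_\M$ denotes the linear model of $\M$). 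Consequently the infinitesimal element $z$ produced by the drift lies in $Z_\M \cap Z_{\Sigma_0} \subset Z_\M$, whence $\Rel_{-z \ast} \mu_0$ is $B$-invariant and supported on $\M$; the appeal to \cite{eskin2018invariant} used at the end of the proof of Theorem~\ref{heredity of weak cassification} then yields an invariant subvariety $\M'' \subset \M$ with $\Rel_{-z \ast} \mu_0 = m_{\M''}$. Since $\tilde y, z \in Z_\M$, the sum $\tilde y + z$ also lies in $Z_\M$, and this establishes weak classification for $\M$.
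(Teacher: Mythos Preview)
Your strategy is the right one and matches the paper's intended (tacit) argument: push $\mu$ down via $p_{\Sigma_0}$, apply Theorem~\ref{classification} on $\N$, lift the resulting real-Rel parameter to $Z_\M$, then invoke Theorem~\ref{heredity of weak cassification}. The first two paragraphs are correct; in particular, the surjectivity $\Res_{\Sigma_0}\colon Z_\M \to Z_\N$ you use for the lift follows from $\Res_{\Sigma_0}(V_\M)=V_\N$ together with the factoring $\Res = \Res \circ \Res_{\Sigma_0}$, and the equivariance $p_{\Sigma_0}\circ \Rel_{\tilde y}=\Rel_{y}\circ p_{\Sigma_0}$ follows from \eqref{eq: equivariance of dev}.

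Where your argument becomes shakier is the last paragraph. Re-opening the proof of Theorem~\ref{heredity of weak cassification} to track that the drift stays inside $V_\M$ is not wrong in spirit, but your sketch does not handle the case split there (when \eqref{eq: deal first with} fails the proof passes through an auxiliary $\SS'$ and an application of Proposition~\ref{relating the measures} before the drift is ever run), and the description of what the drift outputs is a bit loose. There is a much cleaner way to obtain the upgrade $\M''\subset \M$ and $z\in Z_\M$ that avoids all of this. Since $m_{\M''}$ is $A$-invariant and $g_t \Rel_z = \Rel_{e^t z}\, g_t$, one has for every $t>0$
\[
g_{-t*}\mu_0 \;=\; \Rel_{e^{-t}z*}\, m_{\M''}.
\]
The left-hand side is supported on $g_{-t}\M=\M$, so $m_{\M''}\big(\{q:\Rel_{e^{-t}z}(q)\in\M\}\big)=1$ for all $t>0$. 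Letting $t\to\infty$ and using that $\Rel$ is continuous and $\M$ is closed gives $m_{\M''}(\M)=1$, hence $\M''=\mathrm{supp}\,m_{\M''}\subset\M$. For $z$, pick any $q\in\M''_\infty$ with $\Rel_z(q)\in\M$; then $g_{-t}q\in\M$ and $\Rel_{e^{-t}z}(g_{-t}q)=g_{-t}\Rel_z(q)\in\M$, and for $t$ large enough the two points lie in a single period chart, forcing $e^{-t}z\in V_\M$, hence $z\in Z_\M$. This replaces your last paragraph entirely and requires no inspection of the proof of Theorem~\ref{heredity of weak cassification}.
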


\section{Nondivergence of the horocycle flow and frequency of passage
  near singular sets} 
The main results of this section are Proposition \ref{frequencyofpassage} and Proposition \ref{frequencyofpassagesaddleconnection}, which uniformly bound the amount of time a horocycle trajectory can spend either near the support of one of the measures described in \S \ref{push}, or near the set of surfaces with a horizontal saddle connection of bounded length. 
 A special case of Proposition \ref{frequencyofpassage} was obtained in \cite[\S 10]{bainbridge2016horocycle}. We also establish a similar result for large circles in Proposition \ref{prop: push of circles} that is used for the proof of Theorem \ref{thm: circle averages}. 

 \subsection{Singular sets and long horocycle trajectories}
By a well-known compactness criterion, the sets
$$
\{q \in \H: \text{any saddle connection on } q \text{ has length } \geq \varepsilon \}
$$
\noindent are an exhaustion of $\H$ by compact sets. We will be interested in giving upper bounds on the amount of time a horocycle trajectory can spend outside large compact sets. For measurable $A \subset \R$, we denote the Lebesgue measure of $A$ by $|A|.$
Let $\M$ be an invariant subvariety and let $\M_r$ denote the surfaces in $\M$ with no horizontal saddle connections of length less than $r$. 
\begin{thm}[Quantitative nondivergence of the horocycle flow, \cite{minsky2002nondivergence}]\label{quantitative} 
There are constants $C>0$, $\rho_0$ and $\alpha>0$ such that for every $q \in \M$, every $\rho \in (0,\rho_0]$ and every $T>0$, if for every saddle connection $\delta$ on $q$ we have $\max_{s \in [0,T]} \|\hol_{u_s  q}(\delta) \| \geq \rho$, then for any $0<\varepsilon<\rho$, 
\begin{align}
    | \{s \in [0,T]  :  u_sq \ \text{has a saddle connection of length} < \varepsilon \} | < C\left(\frac{\varepsilon}{\rho}\right)^{\alpha}T .
\end{align}
In particular,
\begin{itemize}
    \item[(I)]For any $\varepsilon>0$ and any compact $L \subset \M$, there is a compact $K$ such that for any $T>0$ and any $q \in L$,        
        \begin{align*}
            \frac{1}{T} |\{s \in [0,T]  :  u_s  q \notin K \}| < \varepsilon.
        \end{align*}  
    \item[(II)] For any $\varepsilon>0$ and $r>0$, there is a compact $K \subset \M$ such that for any $q \in \M_r$ there is a $T_0>0$ such that for all $ T>T_0$,       
            \begin{align*}
                \frac{1}{T} |\{s \in [0,T]  :  u_s q \notin K \}| < \varepsilon.
            \end{align*}     
\end{itemize}
\end{thm}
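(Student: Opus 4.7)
Since the main quantitative bound is quoted directly from Minsky--Weiss \cite{minsky2002nondivergence}, my plan focuses on deducing (I) and (II) from the main inequality. The underlying Minsky--Weiss argument rests on the fact that along a horocycle orbit, the squared length $\|\hol_{u_s q}(\delta)\|^2 = (x(\delta)+s\,y(\delta))^2+y(\delta)^2$ is a polynomial of degree at most two in $s$, hence $(C,\alpha)$-good with universal constants. This is combined with a Besicovitch-type covering argument and with the observation that sufficiently short saddle connections assemble into a tree-like combinatorial structure which allows an iterative application of the good-function bound. I would cite this rather than reproduce it.

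For (I), I would argue as follows. Given a compact $L\subset \M$, the set of saddle-connection holonomies on surfaces in $L$ is uniformly discrete, so by compactness there exists $\rho_L>0$ such that every saddle connection on every $q\in L$ has length at least $\rho_L$. In particular the hypothesis $\max_{s\in[0,T]}\|\hol_{u_s q}(\delta)\|\geq \rho_L$ holds already at $s=0$ for every $q\in L$, every $T>0$, and every saddle connection $\delta$ on $q$. Given $\varepsilon>0$, choose $\varepsilon_0>0$ small enough that $C(\varepsilon_0/\rho_L)^\alpha<\varepsilon$, and set
$$
K \df \{p\in\M : \text{every saddle connection on } p \text{ has length }\geq \varepsilon_0\}.
$$
By the standard compactness criterion $K$ is compact, and the main inequality yields $|\{s \in [0,T] : u_s q \notin K\}|<\varepsilon T$ for every $q\in L$ and every $T>0$, as required.

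For (II), I would set $\rho \df r/2$ and fix $q\in\M_r$. A saddle connection $\delta$ with $y(\delta)=0$ is horizontal, hence by definition of $\M_r$ satisfies $|x(\delta)|\geq r>\rho$, and $\|\hol_{u_s q}(\delta)\|\geq\rho$ for all $s$. A saddle connection with $\|\hol_q(\delta)\|\geq\rho$ already satisfies the hypothesis at $s=0$. The remaining saddle connections lie in the finite set of $\delta$ with $y(\delta)\neq 0$ and $\|\hol_q(\delta)\|<\rho$; let $y_{\min}(q)>0$ be the minimum of $|y(\delta)|$ over this finite collection. For $T>T_0\df 2\rho/y_{\min}(q)$, the affine function $s\mapsto x(\delta)+s\,y(\delta)$ sweeps an interval of length $T|y(\delta)|>2\rho$, so its absolute value exceeds $\rho$ at some $s\in[0,T]$. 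Hence the hypothesis of the main inequality holds for every saddle connection on $q$, and the same choice of $\varepsilon_0$ and $K$ as in (I) gives (II). The only genuine obstacle is the main inequality itself; once it is in hand, both (I) and (II) reduce to uniform verification of its hypothesis, by a compactness argument in (I) and a finiteness argument in (II).
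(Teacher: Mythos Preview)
The paper does not supply its own proof of this theorem: it is stated with a citation to \cite{minsky2002nondivergence} and then used as a black box, with items (I) and (II) recorded as immediate consequences but not argued in the text. So there is nothing to compare against, and your task reduces to checking that (I) and (II) really do follow from the main inequality as you claim.

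Your deductions are essentially correct. Two small things to tighten. First, in both (I) and (II) you must feed the main inequality a parameter $\rho\in(0,\rho_0]$; you set $\rho=\rho_L$ in (I) and $\rho=r/2$ in (II) without checking these are at most $\rho_0$. The fix is harmless: replace each by its minimum with $\rho_0$, and your verifications of the hypothesis still go through (the hypothesis only becomes easier with smaller $\rho$). Second, in (II) you write ``the same choice of $\varepsilon_0$ and $K$ as in (I)'', but $\varepsilon_0$ in (I) was calibrated to $\rho_L$, not to the $\rho$ of (II); you mean the same \emph{kind} of choice, namely $\varepsilon_0$ with $C(\varepsilon_0/\rho)^\alpha<\varepsilon$ and $K=\{p:\text{all saddle connections have length }\geq\varepsilon_0\}$. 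Also note the trivial edge case in (II) where the finite collection of short non-horizontal saddle connections is empty; then any $T_0$ works. With these cosmetic adjustments your argument is complete.
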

It will be useful to introduce a scalar product $\langle \cdot | \cdot \rangle$ on $Z_{\M}$ and we denote by $\| \cdot \|$ the induced norm on $Z_{\M}$. For $R>0$ and an invariant subvariety $\N \varsubsetneq \M$, we denote
\begin{equation}\label{eq: def BR}
    \B(R) \df \left\{x \in Z_{\N}^{\perp} :  \|x\| \leq R\right \}.
 \end{equation}
From \cite[Thm. 11.2]{MW2} or \cite[Thm. 6.1]{bainbridge2016horocycle} we have: 

\begin{lem}\label{speed}
There is a constant $\sigma >0$, depending on the scalar product on $Z_\M$, such that for any $R>0$, if $x \in Z_{\M}$  with $\|x\|<R$ and $q \in \M_{\sigma R}$ then $\Rel_{x}(q)$ is well-defined.  
\end{lem}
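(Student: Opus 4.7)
The plan is to combine two ingredients: a quantitative bound on how the real Rel flow affects the horizontal holonomies of saddle connections, and the characterization of the domain $\Delta$ in terms of collapsing horizontal saddle connections given in \cite{MW2, bainbridge2016horocycle}. Concretely, for $x \in Z$ and $q \in \H$, the point $\Rel_{x}(q)$ is well-defined provided that along the straight-line path $t \mapsto tx$ for $t \in [0,1]$, no horizontal saddle connection of $q$ has its horizontal holonomy pass through $0$.

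First, I would unwind the effect of a real Rel deformation on holonomies. Writing $x = \sum_{\xi \in \Sigma} c_\xi \xi^*$ with $c_\xi \in \R$ and $\sum_\xi c_\xi = 0$, the definition of the $\xi^*$ and the fact that Rel acts by translation on $\dev$ give
$$
\hol_{\Rel_{tx}(q)}(\delta) = \hol_q(\delta) + \bigl(t(c_{\xi_2} - c_{\xi_1}),\, 0\bigr)
$$
for any saddle connection $\delta$ from $\xi_1$ to $\xi_2$. In particular the vertical component of every holonomy is preserved by real Rel, so a saddle connection is horizontal after the deformation if and only if it was horizontal before; only horizontal lengths change along the path.

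Second, since $Z_{\M}$ is a finite-dimensional real vector space, all norms on it are equivalent, and the linear functionals $x \mapsto c_{\xi_2} - c_{\xi_1}$ are bounded by a fixed multiple of $\|\cdot\|$. Hence there is a constant $C > 0$, depending only on the scalar product $\langle \cdot \mid \cdot \rangle$, such that $|c_{\xi_2} - c_{\xi_1}| \leq C \|x\|$ for every $x \in Z_\M$ and every pair $\xi_1, \xi_2 \in \Sigma$.

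Finally, I would set $\sigma \df C + 1$. Let $q \in \M_{\sigma R}$ and $x \in Z_\M$ with $\|x\| < R$. Every horizontal saddle connection $\delta$ of $q$ has horizontal holonomy of absolute value at least $\sigma R$. For any $t \in [0,1]$ the horizontal holonomy of $\delta$ on $\Rel_{tx}(q)$ differs from that on $q$ by at most $t C \|x\| < CR$ in absolute value, hence remains of absolute value at least $\sigma R - CR = R > 0$. No horizontal saddle connection collapses along the path, and the characterization of $\Delta$ from \cite{MW2, bainbridge2016horocycle} yields that $\Rel_{x}(q)$ is well-defined. The main subtlety is ensuring that the estimate on horizontal displacement is uniform over all horizontal saddle connections and all directions $x$, which is taken care of once and for all by the finite dimensionality of $Z_\M$; beyond that, I expect no substantive obstacle, as the statement is essentially a uniform packaging of the description of $\Delta$ from the quoted references.
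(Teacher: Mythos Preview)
The paper does not prove this lemma at all: it simply records it as a direct consequence of \cite[Thm.~11.2]{MW2} or \cite[Thm.~6.1]{bainbridge2016horocycle}. Your proposal is a correct unpacking of why those results give the statement, and the route you take (bound the horizontal displacement of holonomies by a constant times $\|x\|$ via finite-dimensionality, then invoke the characterization of $\Delta$) is exactly the content behind the citation.

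One small wording issue worth tightening: when you write ``the horizontal holonomy of $\delta$ on $\Rel_{tx}(q)$'' you are implicitly assuming that $\Rel_{tx}(q)$ is already defined and that $\delta$ persists as a saddle connection there, which is precisely what is at stake. It is cleaner to phrase the computation as a statement about the linear function $t \mapsto \hol_q(\delta)_x + t(c_{\xi_2}-c_{\xi_1})$ and to observe that it does not vanish on $[0,1]$; the cited characterization of $\Delta$ says exactly that this condition, checked for every horizontal saddle connection of the \emph{initial} surface $q$, is sufficient for the real Rel path to be defined. With that rephrasing your argument is complete.
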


\begin{prop}\label{injectivity}
Let $\N \subset \M$ be an invariant subvariety and let  $Z'$ be any linear subspace of $Z_{\M}$ transverse to $Z_{\N}$. Then for any $R>0$, there is $r>0$ such that the map
\begin{align}\label{eq: the map Phi2}
    \N_r \times \{x \in Z' : \|x\| \leq R \} \to \M,  \ \ \ \ \ (q,x) \mapsto \Rel_{x}(q)
\end{align}
is well-defined and injective. 
\end{prop}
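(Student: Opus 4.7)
The plan is to treat well-definedness and injectivity separately. For well-definedness, fix $R' > R$ and set $r \df \sigma R'$ with $\sigma$ as in Lemma \ref{speed}; then $\N_r \subset \M_r \subset \M_{\sigma R'}$, and since $\|x\| \leq R < R'$, Lemma \ref{speed} ensures that $\Rel_x(q)$ is well-defined for every $(q,x)$ in the domain.

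For injectivity, my plan is to lift the problem to the marked stratum $\Hm$, using that there the developing map is globally defined, the Rel flow acts (where defined) as translation by the Rel vector, and $\Mod(S,\Sigma)$ acts trivially on $\mathfrak{R}$. Suppose $\Rel_{x_1}(q_1) = \Rel_{x_2}(q_2)$. Pick a lift $\tilde q_1 \in \pi^{-1}(q_1)$ lying in some irreducible component $X$ of $\pi^{-1}(\N)$, modeled on a $\C$-linear subspace $V \subset H^1(S,\Sigma;\C)$, and set $\tilde q_2 \df \Rel_{-x_2}(\Rel_{x_1}(\tilde q_1)) \in \Hm$. By construction, $\tilde q_2$ is a lift of $q_2$ and
\[
\dev(\tilde q_2) - \dev(\tilde q_1) = x_1 - x_2 \in Z'.
\]
If $\tilde q_2$ lies in $X$, then $x_1 - x_2 \in V \cap Z'$; since $\Mod(S,\Sigma)$ acts trivially on $\mathfrak{R}$, every irreducible component of $\pi^{-1}(\N)$ satisfies $V \cap \mathfrak{R} = \mathfrak{R}_\N$, so $V \cap Z' \subseteq Z_\N \cap Z' = \{0\}$ by transversality. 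Therefore $x_1 = x_2$, and injectivity of $\dev$ on the connected equilinear manifold $X$ forces $\tilde q_1 = \tilde q_2$, whence $q_1 = q_2$.

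The remaining and principal task, which I expect to be the main obstacle, is to guarantee --- by enlarging $r$ as a function of $R$, $\N$, and $Z'$ --- that $\tilde q_2$ actually lies in the same irreducible component as $\tilde q_1$. Note that as soon as $x_1 \neq x_2$, the Rel path in $\Hm$ from $\tilde q_1$ to $\tilde q_2$ leaves $\pi^{-1}(\N)$ at time $0^+$ (because the direction $x_1-x_2 \in Z'$ is transverse to $V \cap \mathfrak R = \mathfrak R_\N$), and could a priori re-enter $\pi^{-1}(\N)$ at time $1$ in a different irreducible component. I would rule this out by contradiction: a failure would produce, for arbitrarily large $r$, two points in distinct irreducible components of $\pi^{-1}(\N) \cap \pi^{-1}(\M_r)$ whose period images differ by a vector in $Z'$ of norm at most $2R$. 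Using the $\Mod(S,\Sigma)$-action to fix one of the components, the $A$-equivariance $g_t \Rel_y = \Rel_{e^t y} g_t$ to adjust scales, and quantitative nondivergence (Theorem \ref{quantitative}) to reduce to a compact subset of $\Hm$ where only finitely many irreducible components of $\pi^{-1}(\N)$ intervene (by local finiteness of the decomposition), one obtains two distinct properly embedded components containing points arbitrarily close in $\Hm$, contradicting proper embeddedness.
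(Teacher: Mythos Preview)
Your well-definedness argument via Lemma \ref{speed} is fine, and the reduction of injectivity to the claim ``$\tilde q_2$ lies in the same irreducible component $X$ as $\tilde q_1$'' is correct in spirit. (One remark: you invoke ``injectivity of $\dev$ on $X$'', which is false in general --- $\dev$ is only a local diffeomorphism on an equilinear manifold. But you do not need it: once $x_1=x_2$, your own definition $\tilde q_2 = \Rel_{-x_2}(\Rel_{x_1}(\tilde q_1))$ immediately gives $\tilde q_2=\tilde q_1$.)

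The genuine gap is in the final step. You write that a failure would produce ``two distinct properly embedded components containing points arbitrarily close in $\Hm$, contradicting proper embeddedness.'' This is not a contradiction. Proper embeddedness only says each component is closed in $\Hm$; it does not separate distinct components. Local finiteness tells you that on a fixed compact set only finitely many components appear, but two of those finitely many can perfectly well contain points at distance $\to 0$ from each other (they could even intersect). So the argument as written does not close. What you are missing is a \emph{base case}: a statement that for some fixed compact $K$ and some small $\varepsilon'>0$, the map $(q,x)\mapsto \Rel_x(q)$ is already injective on $K\times\{x\in Z':\|x\|\le\varepsilon'\}$. This is where the transversality hypothesis on $Z'$ is actually used, and it is a local/compactness argument, not a proper-embeddedness argument.

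The paper's proof follows exactly this two-step pattern, and does so entirely downstairs in $\M$ without lifting to $\Hm$. First it fixes the compact $K$ coming from nondivergence (Theorem \ref{quantitative}(II) with $r=1$, $\varepsilon=\tfrac12$) and uses transversality of the vector fields $\vec x$, $x\in Z'$, to $\N$ together with compactness of $K$ to get $\varepsilon'>0$ with the map injective on $K\times\{\|x\|\le\varepsilon'\}$. Then, given $R$, it sets $t=\log(R/\varepsilon')$ and $r=e^t$; for $q,q'\in\N_r$ and $\|x\|,\|x'\|\le R$ with $\Rel_x(q)=\Rel_{x'}(q')$, it applies $g_{-t}$ so that $g_{-t}q,g_{-t}q'\in\N_1$ and $\|e^{-t}x\|,\|e^{-t}x'\|\le\varepsilon'$, and then --- this is the point your sketch leaves vague --- uses nondivergence to find a \emph{single} $s$ with both $u_sg_{-t}q$ and $u_sg_{-t}q'$ in $K$, whence the base case applies via the equivariance $\Rel_{e^{-t}x}(u_sg_{-t}q)=u_sg_{-t}\Rel_x(q)$. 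Your outline has the same rescaling and nondivergence ingredients, but without the explicit base case and the simultaneous landing of both points in $K$, the argument does not go through.
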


\begin{proof}
Let $K$ be a compact subset of $\M$ as in item (II) of Theorem \ref{quantitative} for $r=1$ and $\varepsilon = \frac{1}{2}$. Since $K$ is compact, there is $\ell \in (0,1)$ such that any surface $q \in K$ does not have horizontal saddle connections shorter than $\ell$. For any $x \in Z'$, the vector field $\vec{x}$ is transverse to $\N$. From this, and from Lemma \ref{speed}, we find that there is $\varepsilon' < \frac{\ell}{\sigma}$ such that the map  
$$
K \times \{x \in Z' : \|x\| \leq \varepsilon' \} \to \M, \ \ \ \ \ (q,x) \mapsto \Rel_{x}(q)
$$
is well-defined and injective. 

 Given $R>0$, let $r \df  \frac{R}{\varepsilon'}$ and $t \df \log r$, so that $e^t\varepsilon' = R \text{ and } r = e^t.$   Suppose $q, q' \in \N_{r}$ and  $x, x' \in \{z \in Z' : \|z\| \leq R \}$ are such that $\Rel_{x}(q)=\Rel_{x'}(q')$ (it follows from Lemma \ref{speed} and the choice of $\varepsilon'$  that $\Rel_{x}(q)$ and $\Rel_{x'}(q')$ are well-defined). By \eqref{eq: saddle connection equivariance} and \eqref{eq: some matrices}, one has $g_{-t} q, g_{-t} q' \in \N_1$, and thus, by definition of $K$ there is $s>0$ such that $u_sg_{-t} q$ and $u_sg_{-t}q'$ both belong to $K$. By \eqref{eq: equivariance Rel}, 
\begin{align*}
    \Rel_{e^{-t}x}(u_sg_{-t} q)=u_s g_{-t} \Rel_x(q)= u_s g_{-t} \Rel_{x'}(q') = \Rel_{e^{-t}x'}(u_sg_{-t} q').
\end{align*}
\noindent By definition $e^{-t}x$ and $e^{-t}x'$ belong to $\{z \in Z' : \|z\| \leq \varepsilon' \}$, and thus $x=x'$ and $q = q'$. 
\end{proof}

\begin{remark}
Let $\N \subset \M$ be an invariant subvariety. Proposition \ref{injectivity} implies that for any $R>0$, there is $r>0$ such that the map
\begin{align}\label{eq: the map Phi}
    \N_r \times \B(R) \to \M,  \ \ \ \ \ (q,x) \mapsto \Rel_{x}(q)
\end{align}
\noindent is well-defined and injective.
\end{remark}

The following useful statement was derived from Theorem \ref{quantitative} in \cite[Prop. 10.5]{bainbridge2016horocycle}.

\begin{prop}\label{supernondivergence}
Let $L$ be a compact subset of $\M_{\infty}$, $\varepsilon>0$ and $r>0$. There is an open neighborhood $\mathcal{W}$ of $L$ and a compact $\widehat{\mathcal{W}} \subset \M_r$ containing $\mathcal{W}$ such that for any $q \in \H$ and any interval $I \subset \R$ for which $u_s q \in \mathcal{W}$ for some $s \in I$, we have 
\begin{align*}
  \left | \left\{ s \in I  : u_s q \notin \widehat{\mathcal{W}} \right\} \right| < \varepsilon |I|.
\end{align*}
\end{prop}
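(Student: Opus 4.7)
The plan is to apply Theorem \ref{quantitative} (quantitative nondivergence of the horocycle flow), adapting the argument of \cite[Prop. 10.5]{bainbridge2016horocycle}. The key observation is that, since $L$ is compact and contained in $\M_\infty$, its saddle connections of bounded length are uniformly bounded away from horizontal, and by continuity this persists in a small neighborhood of $L$. Combined with the horocycle-invariance of the $y$-components of holonomies, this yields the uniform lower bound on $\max_{s} \|\hol_{u_s q}(\delta)\|$ needed to invoke Theorem \ref{quantitative}.

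First I would fix $R_0 > r$ and use compactness of $L \subset \M_\infty$ together with continuity of saddle connection holonomies to produce $\rho_0 > 0$ and an open neighborhood $\mathcal{W}_0$ of $L$ such that the $y$-component of $\hol_q(\delta)$ has absolute value at least $\rho_0$ for every $q \in \mathcal{W}_0$ and every saddle connection $\delta$ on $q$ with $\|\hol_q(\delta)\| \leq R_0$; in particular $\mathcal{W}_0 \subset \M_{R_0} \subset \M_r$. Since the horocycle flow fixes the $y$-components of holonomies, if $u_{s_0} q \in \mathcal{W}_0$ for some $s_0 \in I$, then $u_s q$ has the same horizontal saddle connections as $u_{s_0} q$, so $u_s q \in \M_{R_0}$ for every $s \in \R$; moreover each saddle connection $\delta$ on $q$ satisfies $\max_{s \in I} \|\hol_{u_s q}(\delta)\| \geq \|\hol_{u_{s_0} q}(\delta)\| \geq \rho_1 := \min(R_0, \rho_0)$, since $\delta$ either has length at most $R_0$ on $u_{s_0} q$ (in which case the $y$-component bound applies) or length greater than $R_0$. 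Applying Theorem \ref{quantitative}, for every sufficiently small $\varepsilon' \in (0, \rho_1)$,
$$
|\{s \in I : u_s q \text{ has a saddle connection of length less than } \varepsilon'\}| < C(\varepsilon'/\rho_1)^{\alpha} |I|,
$$
and I would take $\varepsilon'$ so small that $C(\varepsilon'/\rho_1)^\alpha < \varepsilon/2$ and $L \subset \mathrm{int}(K_{\varepsilon'})$, where $K_{\varepsilon'}$ denotes the compact set of surfaces having no saddle connection shorter than $\varepsilon'$.

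For the sets themselves I would then take $\mathcal{W} := \mathcal{W}_0 \cap \mathrm{int}(K_{\varepsilon'})$ and define
$$
\widehat{\mathcal{W}} := K_{\varepsilon'} \cap \bigl\{ q \in \M : \text{every saddle connection $\delta$ on $q$ with $\|\hol_q(\delta)\| < R_0$ has $y$-component of absolute value $\geq \rho_0/2$} \bigr\}.
$$
The second set is closed since its complement --- the existence of a saddle connection of length less than $R_0$ with $y$-component of absolute value less than $\rho_0/2$ --- is an open condition on $q$; hence $\widehat{\mathcal{W}}$ is a compact subset of $\M_{R_0} \subset \M_r$ (a horizontal saddle connection on $q \in \widehat{\mathcal{W}}$ would have vanishing $y$-component and therefore length $\geq R_0 > r$), and by construction $L \subset \mathcal{W} \subset \widehat{\mathcal{W}}$.

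The hard part will be to bound, by $\varepsilon |I|/2$, the measure of $s \in I$ for which $u_s q$ lies in $K_{\varepsilon'}$ but not in $\widehat{\mathcal{W}}$; such $u_s q$ carry a `bad' saddle connection $\delta$ with $\|\hol_{u_s q}(\delta)\| < R_0$ and $y$-component of absolute value less than $\rho_0/2$. Because $y$-components are preserved by the horocycle, any such $\delta$ has length greater than $R_0$ on $u_{s_0} q$, appearing there with coordinates $(x,y)$ satisfying $x^2 + y^2 > R_0^2$ and $|y| < \rho_0/2$. For each such $\delta$ the bad $s$-set is an interval of length at most $2R_0/|y|$ centered at $s_0 - x/y$. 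Summing these contributions over the $\delta$ whose bad intervals intersect $I$, and using the quadratic upper bound on the number of saddle connections with holonomy in a given planar region, bounds the total bad measure by $\varepsilon |I|/2$ once $\rho_0$ is chosen small enough in terms of $\varepsilon$ and $R_0$ (note that shrinking $\rho_0$ only weakens the defining property of $\mathcal{W}_0$ and remains consistent with its existence). Combining with the bound from Theorem \ref{quantitative} then yields $|\{s \in I : u_s q \notin \widehat{\mathcal{W}}\}| < \varepsilon |I|$, completing the argument.
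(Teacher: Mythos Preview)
The paper does not give a proof of this proposition; it simply records that the statement is \cite[Prop.~10.5]{bainbridge2016horocycle}. So there is no argument in the paper to compare yours against directly.

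Your setup is on the right track and is presumably close to what \cite{bainbridge2016horocycle} does: producing $\mathcal{W}_0$ and $\rho_0$ from compactness of $L \subset \M_\infty$, using horocycle-invariance of $y$-components to get the uniform lower bound $\rho_1 = \min(R_0,\rho_0)$ needed for Theorem~\ref{quantitative}, and defining $\widehat{\mathcal{W}}$ as $K_{\varepsilon'}$ intersected with a closed ``no nearly-horizontal saddle connection of moderate length'' condition. The first half of the estimate, bounding $|\{s\in I: u_sq \notin K_{\varepsilon'}\}|$ via quantitative nondivergence, is correct.

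The gap is in what you yourself flag as the ``hard part.'' Your plan is to sum $|B_\delta \cap I|$ over the bad saddle connections $\delta$ and invoke ``the quadratic upper bound on the number of saddle connections with holonomy in a given planar region.'' Two problems: first, the standard quadratic bound (Masur, Eskin--Masur) controls saddle connections with holonomy in a \emph{disk} centred at the origin, not in a long thin rectangle far from the origin, which is exactly the region that arises here (on $u_{s_0}q$ the bad $\delta$ have $|y|<\rho_0/2$ and $|x|$ possibly of order $|I|\rho_0$). Second, even granting a rectangle bound of the form $O(\text{area})$, a dyadic decomposition in $|y_\delta|$ produces at each scale a contribution of order $R_0^2$ that is \emph{independent of $|I|$}; for small $|I|$ this swamps $\tfrac{\varepsilon}{2}|I|$. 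Note the proposition must hold for arbitrarily short intervals $I$, and in that regime a pure summation heuristic cannot work---one needs $\mathcal{W}$ to sit inside $\widehat{\mathcal{W}}$ with a definite buffer along horocycle orbits. To close this you will need either a sharper counting input (e.g.\ a ``no small virtual triangles'' argument giving $O(Lh+1)$ saddle connections with holonomy in an $L\times h$ box, uniformly on compacta) combined with a separate treatment of short $I$, or a different organization of the estimate; it is worth consulting the actual proof in \cite{bainbridge2016horocycle}.
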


By combining Proposition \ref{injectivity} and Proposition \ref{supernondivergence}, we get the following:  

\begin{prop}\label{supernondivergenceinjectivity} 
Let $\N \subset \M$ be an invariant subvariety. For any $R>0$, $\varepsilon>0$ and any compact $K \subset \N_{\infty}$, there is  $\delta>0$, a neighborhood $\mathcal{W}$ of $K$ and a compact set $\widehat{\mathcal{W}} \subset \N$ containing $\mathcal{W}$, such that:

\begin{itemize}
    \item the map 
        \begin{align*}\label{eq: is well defined} 
            \widehat{\mathcal{W}} \times \B(R) \times \B(\delta) \to \M, \ \ \ \ \   (q,x,y) \mapsto \Rel_{x+\mathbf{i} \cdot y}(q)
        \end{align*}
        is well-defined, continuous and injective;
    \item for any interval $I \subset \R$ and any $q \in \M$ the following is satisfied: if $s \in I$ is such that $u_s  q \in \mathcal{W}$, then
        \begin{align*}
            \left| \left\{ s \in I  : u_s q \notin \widehat{\mathcal{W}} \right\} \right| < \varepsilon |I|.
        \end{align*}
\end{itemize}
\end{prop}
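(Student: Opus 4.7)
The plan is to combine Propositions \ref{injectivity} and \ref{supernondivergence}, and then extend the resulting injectivity into the `imaginary' Rel direction by a compactness argument.

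I would first apply Proposition \ref{injectivity} to the inclusion $\N \subset \M$ with the transverse subspace $Z' = Z_\N^{\perp}$. This produces some $r_0 > 0$ such that the map
\[
\Psi : \N_{r_0} \times \B(R) \to \M, \qquad (q,x) \mapsto \Rel_x(q),
\]
is well-defined and injective. Next, I would apply Proposition \ref{supernondivergence} to the invariant subvariety $\N$, the compact set $L = K \subset \N_\infty$, and the parameters $\varepsilon$ and $r = r_0$. This yields an open neighborhood $\mathcal{W}$ of $K$ in $\N$ and a compact set $\widehat{\mathcal{W}} \subset \N_{r_0}$ containing $\mathcal{W}$ satisfying the desired frequency-of-passage bound. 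These $\mathcal{W}$ and $\widehat{\mathcal{W}}$ will serve in the conclusion.

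It remains to choose $\delta > 0$ so that the extended map $\Phi(q, x, y) \df \Rel_{x + \mathbf{i} \cdot y}(q)$ is well-defined, continuous and injective on $\widehat{\mathcal{W}} \times \B(R) \times \B(\delta)$. Well-definedness and continuity follow from the openness of $\Delta$, the compactness of $\widehat{\mathcal{W}} \times \B(R)$, and the continuity of $\Rel$ on $\Delta$: a finite cover argument provides $\delta_0 > 0$ such that $(q, x + \mathbf{i} \cdot y) \in \Delta$ whenever $(q,x) \in \widehat{\mathcal{W}} \times \B(R)$ and $\|y\| \leq \delta_0$.

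The main obstacle is global injectivity of $\Phi$ on $\widehat{\mathcal{W}} \times \B(R) \times \B(\delta)$ for small $\delta > 0$. Its restriction to the slice $\{y = 0\}$ coincides with $\Psi$, hence is injective. Moreover, at each point of this slice, $\Phi$ is a local diffeomorphism: using the rank-one hypothesis on $\M$, the three partial differentials span, respectively, a copy of $T_q\N$, the subspace $Z_\N^{\perp}$, and the subspace $\mathbf{i} \cdot Z_\N^{\perp}$, whose real dimensions sum to $\dim_\R \M$. Given these two facts, the desired global injectivity follows by contradiction: if no such $\delta > 0$ existed, one could extract sequences $(q_n, x_n, y_n) \neq (q_n', x_n', y_n')$ in $\widehat{\mathcal{W}} \times \B(R) \times \B(1/n)$ with the same image under $\Phi$. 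Passing to a subsequence, $(q_n, x_n, y_n) \to (q, x, 0)$ and $(q_n', x_n', y_n') \to (q', x', 0)$. Continuity forces $\Psi(q,x) = \Psi(q', x')$, hence $(q,x) = (q',x')$ by injectivity of $\Psi$; but then both sequences accumulate at the common point $(q, x, 0)$, contradicting the local injectivity of $\Phi$ there.
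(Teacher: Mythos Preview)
Your proposal is correct and follows essentially the same approach as the paper: apply Proposition~\ref{injectivity} to get $r$, then Proposition~\ref{supernondivergence} to produce $\mathcal{W}$ and $\widehat{\mathcal{W}} \subset \N_r$, and finally thicken in the $\mathbf{i}\cdot y$ direction using compactness and transversality. The only notable differences are in presentation: the paper writes the extended map as the composition $(q,x,y) \mapsto \Rel_{\mathbf{i}\cdot y}(\Rel_x(q))$ and then invokes \cite[Prop.~4.5]{bainbridge2016horocycle} to identify this with $\Rel_{x+\mathbf{i}\cdot y}(q)$, whereas you work with the latter directly via openness of $\Delta$; and the paper's injectivity argument is stated tersely as ``$\widehat{\mathcal{W}}$ is compact and $\mathbf{i}\cdot y$ is transverse to $\N$'', while you spell out the underlying local-diffeomorphism-plus-sequence-extraction argument explicitly.
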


\begin{proof}
By Proposition \ref{injectivity}, there exists $r>0$ such that the map 
$\Phi :\N_r \times \B(R) \to \M$ defined by \eqref{eq: the map Phi} is well-defined and injective. 
Let $\mathcal{W}$ and $\widehat{\mathcal{W}}$ be the sets provided by Proposition \ref{supernondivergence}, corresponding to this $r$ and to $L=K$, where $K$ is as in the statement of the Proposition. Since $\widehat{\mathcal{W}} \subset \N_r$, it follows that the restriction of $\Phi$ to $\widehat{\mathcal{W}} \times \B(R)$ is still injective. Since $\widehat{\mathcal{W}}$ is compact, and since for any $y \in  Z_{\N}^{\perp}$, $\mathbf{i}\cdot y$ is transverse to $\N$, there is $\delta_0>0$ such that for any $\delta \leq \delta_0$, the map 
\begin{align*}
     \widehat{\mathcal{W}} \times \B(R) \times \B(\delta)  \to \M, \ \ \ \ \   (q,x,y) \mapsto \Rel_{\mathbf{i} \cdot y}(\Rel_{x}(q))
\end{align*}
\noindent is well-defined, continuous and injective. Finally, it follows from \cite[Prop. 4.5]{bainbridge2016horocycle} that for any $(q,x,y) \in   \widehat{\mathcal{W}} \times \B(R) \times \B(\delta)$ we have $\Rel_{\mathbf{i} \cdot y}(\Rel_{x}(q)) = \Rel_{x+\mathbf{i} \cdot y}(q)$, which concludes the proof.
\end{proof}
For any $r>0$ and any $K \subset \N$, denote
$$
Z(K,R) \df \left\{ \Rel_x(q) : q \in K, \, x \in \B(R) \cap Z^{(q)} \right\}.
$$

\begin{prop}[Frequency of passage near Rel pushes of invariant subvarieties]\label{frequencyofpassage}
Let $\N \varsubsetneq \M$ be an invariant subvariety, let $r>0$, let $K$ be a compact subset of $\N_\infty$ and let $\varepsilon>0$. There is a neighborhood $\mathcal{U}$ of $Z(K,r)$ and $R>r$ such that for any compact subset $L$ of $\M_{R} \sm Z(\N,R)$, there is $T_0>0$ such that for any $T>T_0$ and any $q \in L$, we have
\begin{align}
    \frac{1}{T} \ |\{s\in [0,T]  : u_s  q \in \mathcal{U} \}| <\varepsilon    . 
\end{align}

\end{prop}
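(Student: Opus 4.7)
The plan is to apply Proposition \ref{supernondivergenceinjectivity} to get local product coordinates around $Z(K, r)$, then exploit the affine action of the horocycle flow on the Rel direction transverse to $\N$. Given $\varepsilon > 0$, pick $R_1 > r$ large and $\varepsilon' > 0$ small (to be pinned down at the end), and apply Proposition \ref{supernondivergenceinjectivity} with parameters $(R_1, \varepsilon', K)$ to get $\delta > 0$, an open neighborhood $\mathcal{W} \subset \N$ of $K$, a compact set $\widehat{\mathcal{W}} \subset \N$ containing $\mathcal{W}$, and the continuous injective map
$$\Phi \colon \widehat{\mathcal{W}} \times \B(R_1) \times \B(\delta) \to \M, \quad (p,x,y) \mapsto \Rel_{x + \mathbf{i}\cdot y}(p).$$
Pick $\delta' \in (0, \delta)$, take any $R > \max(r + \delta', R_1)$, and let $\mathcal{U}$ be the image under $\Phi$ of $\mathcal{W} \times \{x \in Z_{\N}^{\perp}: \|x\| < r+\delta'\} \times \{y \in Z_{\N}^{\perp}: \|y\| < \delta'\}$, an open neighborhood of $Z(K, r)$. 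Also set $\widetilde{\mathcal{U}} \df \Phi(\widehat{\mathcal{W}} \times \B(R_1) \times \B(\delta))$, the larger ``tube''.

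\textbf{Decomposition along passages.} Fix $q \in L$ and a maximal interval $\tilde I \subset [0, T]$ on which $u_s q \in \widetilde{\mathcal{U}}$. By injectivity of $\Phi$ there is on $\tilde I$ a unique decomposition $u_s q = \Phi(p_s, x_s, y_s)$, and the equivariance $u_s \Rel_{x + \mathbf{i}\cdot y}(p) = \Rel_{(x + sy) + \mathbf{i}\cdot y}(u_s p)$ from \eqref{eq: equivariance Rel} forces $p_s$ to move along a horocycle orbit in $\N$, $y_s \equiv y^*$ constant, and $x_s = x_0 + s y^*$ affine. If $y^* = 0$, then $u_s q = \Rel_{x_s}(p_s) \in Z(\N, r + \delta')$; applying $u_{-s}$ and using the commutativity of $U$ with $\Rel_v$ for horizontal $v$ would give $q \in Z(\N, r + \delta') \subset Z(\N, R)$, contradicting $q \in L$. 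Hence $\|y^*\| > 0$.

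\textbf{Passage length comparison.} Within $\tilde I$, the subset where $u_s q \in \mathcal{U}$ is contained in $\{s : \|x_0 + s y^*\| < r + \delta'\}$, of length at most $2(r+\delta')/\|y^*\|$. On the other hand $\tilde I \subseteq \{s : \|x_0 + s y^*\| \le R_1\}$, an interval of length $2R_1/\|y^*\|$, and on this interval, part (b) of Proposition \ref{supernondivergenceinjectivity} applied to the orbit $p_s$ in $\N$ (which visits $\mathcal{W}$ on $\tilde I$) shows $p_s \in \widehat{\mathcal{W}}$ on a $(1 - \varepsilon')$-fraction of it; hence $|\tilde I| \ge (1-\varepsilon') \cdot 2R_1/\|y^*\|$, up to truncation at $\partial[0, T]$. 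Thus the ratio $|\{s \in \tilde I : u_s q \in \mathcal{U}\}|/|\tilde I|$ is at most $(r+\delta')/((1-\varepsilon') R_1)$. Summing over the disjoint maximal $\tilde I$'s inside $[0, T]$ yields the desired $< \varepsilon T$ bound, provided $R_1$ is chosen large and $\varepsilon'$ small so that this ratio is smaller than $\varepsilon/2$.

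\textbf{Main obstacle.} The delicate point is handling boundary $\tilde I$'s truncated by $\partial[0, T]$, where the lower bound on $|\tilde I|$ may fail. This is absorbed into the $\varepsilon T$ bound by choosing $T_0$ large, but this requires a uniform positive lower bound $\|y^*\| \ge c(L) > 0$ over all $\widetilde{\mathcal{U}}$-passages arising from $q \in L$. Such a lower bound follows from the compactness of $L$, the hypothesis $L \cap Z(\N, R) = \emptyset$, the assumption $L \subset \M_R$, and the relative closedness of Rel-pushes provided by Proposition \ref{injectivity} applied to compact subsets of $\N$ with no short horizontal saddle connections; this is essentially where the choice of $R$ strictly larger than $r + \delta'$ plays its role. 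With this lower bound, each $\tilde I$ has length at most $2R_1/c(L)$, and $T_0$ is taken proportional to $R_1/c(L)$.
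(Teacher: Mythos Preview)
Your overall strategy---product coordinates from Proposition \ref{supernondivergenceinjectivity}, affine drift of $(x_s,y_s)$ under the horocycle, comparison of a small tube to a large one, and a compactness argument for the lower bound on $\|y^*\|$---is exactly the paper's. But the ``Passage length comparison'' step has a genuine gap.

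The problematic claim is $|\tilde I|\ge(1-\varepsilon')\cdot 2R_1/\|y^*\|$. Your $\tilde I$ is the maximal \emph{connected} interval on which $u_sq\in\widetilde{\mathcal U}$, and this requires \emph{both} $\|x_s\|\le R_1$ \emph{and} $p_s\in\widehat{\mathcal W}$. Proposition \ref{supernondivergenceinjectivity} only bounds the \emph{total measure} of times where $p_s\notin\widehat{\mathcal W}$ inside a given interval; it says nothing about the first exit time. So $p_s$ may leave $\widehat{\mathcal W}$ almost immediately, making $\tilde I$ very short while the set $\{s:\|x_s\|<r+\delta'\}\cap\tilde I$ could still take up most of $\tilde I$. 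Your ratio bound then fails on that $\tilde I$, and summing over the disjoint $\tilde I$'s does not recover the estimate. (A secondary issue: when $\dim Z_{\N}^{\perp}>1$, the set $\{s:\|x_0+sy^*\|\le R_1\}$ has length $2\sqrt{R_1^2-m^2}/\|y^*\|$ where $m=\min_s\|x_s\|$, not $2R_1/\|y^*\|$; this is fixable but needs the discriminant computation the paper carries out.)

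The paper circumvents this by \emph{not} decomposing into disjoint maximal $\widetilde{\mathcal U}$-intervals. Instead, for each $t\in I_{\mathcal U,q}$ it defines the interval $\mathcal J_t=\{s:\|x_t+(s-t)y_t\|\le R'\}$ purely by the norm condition, and proves two additive estimates on $\mathcal J_t\cap[0,T]$: first $|\mathcal I_t\cap[0,T]|<\tfrac{\varepsilon}{4}|\mathcal J_t\cap[0,T]|$ via a careful quadratic/discriminant computation (valid whenever $[0,T]$ contains an endpoint of $\mathcal J_t$), and second that any $s\in(\mathcal J_t\smallsetminus\mathcal I_t)\cap I_{\mathcal U,q}$ forces $u_{s-t}q_t\notin\widehat{\mathcal W}$, whose measure is then controlled by the nondivergence bound. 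Adding gives $|\mathcal J_t\cap I_{\mathcal U,q}\cap[0,T]|<\tfrac{\varepsilon}{2}|\mathcal J_t\cap[0,T]|$. Finally one extracts from the cover $\{\mathcal J_t\}$ a subcover of multiplicity at most two and sums. This avoids ever needing a lower bound on the length of a connected visit to $\widetilde{\mathcal U}$.
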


\begin{proof}
Let $R'>r$ be large enough so that
\begin{align}\label{eq: def of R'}
    R' > \frac{(8 + \varepsilon)(r+1)}{\varepsilon}.
\end{align}
    By Proposition \ref{supernondivergenceinjectivity}, there is an open neighborhood $\mathcal{W}$ of $K$, a compact $\widehat{\mathcal{W}} \subset \N$ that contains $\mathcal{W}$, and $\delta>0$, such that the map 
\begin{align*}
    \bar \Phi: \widehat{\mathcal{W}} \times \B(R') \times \B(\delta) \to \M, \ \ \bar \Phi (q,x,y) \df \Rel_{x+\mathbf{i} \cdot y}(q)
\end{align*}
\noindent is well-defined, continuous and injective, and such that for any interval $I \subset \R$ and $s\in I$ and $q \in \N$ with $u_s  q \in \mathcal{W}$, we have
\begin{equation}
    \label{eq: the interior}
    \left| \left\{ s\in I  :  u_s q \notin \widehat{\mathcal{W}} \right\} \right| < \frac{\varepsilon}{4}|I|.
\end{equation}
Denote the interior of $\B(R')$ by  $ \mathrm{int}(\B(R')) $ (that is, use strict inequality in \eqref{eq: def BR}), and let
\begin{align*}
    \mathcal{V} \df \mathcal{W} \times \mathrm{int} (\B(r+1)) \times \mathrm{int}(\B(\delta)) \ \ \ \text{ and } \ \ \ \mathcal{U} \df \bar \Phi(\mathcal{V}). 
\end{align*}
Since $\M$ has rank one, and since $\bar \Phi$ is a homeomorphism onto its image, $\mathcal{U}$ is open in $\M$. It contains $Z(K,r)$ by construction. Let $q \in \M \sm Z(\N,R')$ and let
$$
    I_{\mathcal{U},q} \df \{t > 0 : u_t q \in \mathcal{U} \}.
$$

By injectivity of the map $\bar \Phi$, for any $t \in I_{\mathcal{U},q}$ there is a unique triple $(q_t,x_t,y_t) \in \mathcal{V}$ such that $u_t  q = \bar \Phi(q_t,x_t,y_t)$. Since $q \notin Z(\N, R')$ we have $y_t \neq 0$. For any $t \in I_{\mathcal{U},q}$, let 
\begin{align}\label{eq: def of IJ}
    \mathcal{J}_t &= t + \{s \in \R  :  \|x_t + sy_t\| \leq R'\} \\
    \mathcal{I}_t &= t + \{s \in \R  :  \|x_t + sy_t\|< r+1 \}.
\end{align}
Since 
$\mathcal{I}_t$ and $\mathcal{J}_t$ are sub-level sets of real quadratic polynomials, they are intervals.

We claim that if $t \in I_{\mathcal{U},q}$ and $I$ is an interval that contains one of the endpoints of $\mathcal{J}_t$, we have
\begin{equation}\label{eq: our claim}
    |\mathcal{I}_t \cap I|< \frac{\varepsilon}{4} |\mathcal{J}_t \cap I|.
\end{equation}
Indeed, define $p(s) \df \|x_t + sy_t\|^2 = \| x_t \|^2  + 2s \langle x_t,y_t \rangle + s^2 \| y_t \|^2 $ and for any $C\geq 0$, define 
\begin{align}\label{eq: def of Delta}
\Delta_C \df 4\langle x_t,y_t \rangle^2 - 4\| y_t \|^2  (\| x_t \|^2 - C^2)).
\end{align}
This is the discriminant of the real quadratic polynomial whose zeroes are points at which $\|x_t +sy_t\|=C.$ Since $\|x_t + sy_t\| \geq 0$ for all $s$, we have $\Delta_0 \leq 0$ and since $R'>r+1$, by the assumption that $I$ contains one of the endpoints of $\mathcal{J}_t$ we have $\Delta_{r+1} > 0$. From the formula for the roots of a quadratic polynomial, for any $C \geq r+1$ we have
\begin{equation*}
    p(s) \leq C^2 \ \Longleftrightarrow \ s \in - \frac{\langle x_t,y_t \rangle }{\| y_t\|^2} + \bigg[\frac{- \sqrt{\Delta_C}}{2\| y_t\|^2}, \frac{ \sqrt{\Delta_C}}{2\| y_t\|^2}\bigg].
\end{equation*}  
By construction, we have that $s+t \in \mathcal{J}_t$ if and only if $p(s) \leq R'^2$ and $s+t \in \mathcal{I}_t$ if and only if $p(s) < (r+1)^2$. Remembering that $\Delta_{r+1} = \Delta_0 + 4\|y_t \|^2(r+1)^2$ and that $\Delta_0 \leq 0$, we have 
\begin{equation}\label{eq: we compute}
\frac{|\mathcal{I}_t \cap I|}{|\mathcal{J}_t \cap I|} \leq \frac{ \sqrt{\Delta_{r+1}} }{ \| y_t\|^2 } \, \cdot \, \frac{2 \|y_t\|^2}{\sqrt{\Delta_{R'}} - \sqrt{\Delta_{r+1}} }.
\end{equation}
Indeed, \eqref{eq: we compute} is obvious in case $\mathcal{I}_t \cap I = \varnothing$; in case it is not, $I$ contains a subinterval bounded by an endpoint of $\mathcal{I}_t$ and an endpoint of $\mathcal{J}_t$, so 
an interval in which $P(s) \in \left((r+1)^2, R'^2 \right]$. This gives a lower bound for the denominator $|\mathcal{J}_t \cap I|$, and as an upper bound for the numerator we use $|\mathcal{I}_t|.$
We obtain 
\begin{align*}
    \frac{|\mathcal{I}_t \cap I|}{|\mathcal{J}_t \cap I|} &\stackrel{\eqref{eq: we compute}}{\leq} 
    2 \frac{ \sqrt{\Delta_{r+1}} }{\sqrt{\Delta_{R'}} - \sqrt{\Delta_{r+1}} }\\
    &=  2\frac{\sqrt{\Delta_{r+1}}({\sqrt{\Delta_{R'}} + \sqrt{\Delta_{r+1}}})}{4\|y_t\|^2 (R'^2 - (r+1)^2)} 
    =  \frac{\Delta_{r+1} + \sqrt{\Delta_{r+1} \Delta_{R'}}}{2\|y_t\|^2 (R'^2 - (r+1)^2)} \\
    &\leq \frac{\Delta_{r+1} + \sqrt{\Delta_0^2 + 16\|y_t\|^4R'^2(r+1)^2 }}{2\|y_t\|^2 (R'^2 - (r+1)^2)} \\
    &\leq \frac{\Delta_{r+1} - \Delta_0 + 4 \| y_t \|^2R'(r+1)}{2\|y_t\|^2 (R'^2 - (r+1)^2)} \\
    &\leq 2 \frac{(r+1)^2+R'(r+1)}{R'^2-r^2} =  \frac{2(r+1)}{R'-r} \stackrel{\eqref{eq: def of R'}}{<} \frac{\varepsilon}{4}.
\end{align*}
This proves \eqref{eq: our claim}. 

Now, we claim that if $s \in \mathcal{J}_t \sm \mathcal{I}_t$ is such that $u_{s}  q \in \mathcal{U}$ then $u_{s-t} q_t \notin \widehat{\mathcal{W}}$. Indeed, we have $u_s  q = \Rel_{x_t+(s-t)y_t+\mathbf{i} \cdot y_t}(u_{s-t} q)$ by \eqref{eq: equivariance Rel} and thus,  if $u_{s-t}q \in \widehat{\mathcal{W}}$, then $u_sq=\bar\Phi(u_{s-t}q,x_t+(s-t)y_t,y_t)$. But the fact that $s \notin \mathcal{I}_t$ yields a contradiction to the injectivity of $\bar\Phi$.

For $I$ containing an endpoint of $\mathcal{J}_t$, we deduce
\begin{align*}
| \mathcal{J}_t \cap I_{\mathcal{U},q} \cap I| = \ \ & |\mathcal{I}_t \cap I_{\mathcal{U},q} \cap I | + | \big( \mathcal{J}_t\sm\mathcal{I}_t \big) \cap I_{\mathcal{U},q} \cap I | \\ 
\leq \ \ &  |\mathcal{I}_t \cap I | + \left| \left\{s \in \mathcal{J}_t \cap I :  u_s q_t \notin \widehat{\mathcal{W}} \right \} \right| \\ 
\stackrel{\eqref{eq: our claim}, \eqref{eq: the interior}}{<} &\frac{\varepsilon}{4}|\mathcal{J}_t \cap I | + \frac{\varepsilon}{4}|\mathcal{J}_t \cap I |  = \frac{\varepsilon}{2}|\mathcal{J}_t \cap I|.
\end{align*}

Now, let $\sigma$ be as in Lemma \ref{speed}, let $c \df \max(1,\sigma)$ and let $R \df cR'$. We will show that $\mathcal{U}$ and $R$ satisfy the required conclusions. Indeed, let $L$ be a compact subset of $\M_R \sm Z(\N,R)$. For any $q \in L$, we will denote by $\mathcal{J}_{t} = \mathcal{J}_{q,t}$ the interval as in \eqref{eq: def of IJ} to stress the dependence on $q$. We claim that there is $\delta_0$ such that for any $q \in L$ and for any $t \in I_{\mathcal{U},q}$, if $0 \in \mathcal{J}_{q,t}$ then $\|y_t\| \geq \delta_0$. Indeed, suppose by contradiction that there is a sequence $q_n \in L$ together with $t_n \in I_{\mathcal{U},q_n}$ such that $0 \in \mathcal{J}_{q,t_n}$. Let $(q_n',x_n,y_n) \in \mathcal{V}$ such that $u_{t_n}q_n = \bar \Phi(q'_n,x_n,y_n)$ and assume that $\| y_n \| \to 0$. Since $0 \in \mathcal{J}_{q,t_n}$, we have 
$$ 
q_n = u_{-t_n}\bar \Phi(q'_n,x_n,y_n) = u_{-t_n} q'_n \pluscirc (x_n - t_n y_n + \mathbf{i} y_n)
$$
\noindent with $\| x_n - t_ny_n \| \leq R'$. Passing to a subsequence, we can assume that $q_n$ converges to some $q_{\infty} \in L$ and that $x_n -t_ny_n$ converges to some $x_{\infty}$ satisfying $\| x_{\infty }\| \leq R'$. In particular, $q_{\infty} \pluscirc (-x_{\infty},0)$ is well-defined by Lemma \ref{speed} and it follows from \cite[Prop 4.6]{bainbridge2016horocycle} that $q_n \pluscirc -(x_n - t_n y_n,y_n)$ is a sequence in $\N$ that converges to $q_{\infty} \pluscirc (-x_{\infty},0)$. Since $\N$ is closed, this shows that $q_{\infty}$ belongs to $Z(\N,R)$, which is a contradiction and our claim is proved. 

Let $T>0$, let $q \in L$ and let $t \in I_{\mathcal{U},q}$ satisfy $[0,T] \cap \mathcal{J}_{q,t} \neq \emptyset$. If $[0,T]$ does not contain one of the endpoints of $\mathcal{J}_{q,t}$, then it is contained in $\mathcal{J}_{q,t}$. In particular, $0 \in \mathcal{J}_{q,t}$ and thus $\|y_t\| \geq \delta_0$. It follows also that
$$
T < \inf(\mathcal{J}_{q,t}) + |\mathcal{J}_{q,t}| < |\mathcal{J}_{q,t}| <   \frac{\sqrt{\Delta_{R'}}}{\|y_t\|^2} \leq 2\|y_t\|R' \leq \frac{R}{\delta_0}.
$$ 

Let $T_0 \df \frac{R}{\delta_0}$. We proved that for any $q \in L$, any $T>T_0$ and any $t \in I_{\mathcal{U},q}$ such that $[0,T] \cap \mathcal{J}_{q,t} \neq \emptyset$, we have that $[0,T]$ contains one of the endpoints of $\mathcal{J}_{q,t}$. The set $[0,T] \cap I_{\mathcal{U},q}$ is covered by the sets $(\mathcal{J}_{q,t})_{t \in I_{\mathcal{U},q}}$. From this cover, we can extract a subcover $(\mathcal{J}_{j})$ such that any $q \in [0,T]$ is covered at most twice, and hence
\begin{align*}
    |[0,T] \cap I_{\mathcal{U},q}| &\leq \sum_{j} |[0,T] \cap I_{\mathcal{U}} \cap \mathcal{J}_{j} | \\ 
    &< \frac{\varepsilon}{2} \sum_{j} |[0,T] \cap \mathcal{J}_{j}| \leq \varepsilon T. 
\end{align*}
\end{proof}

\begin{prop}[Frequency of passage near the horizontal saddle connection locus]\label{frequencyofpassagesaddleconnection}
Let $\varepsilon>0$ and let $r>0$. There is an open set $\mathcal{U}$ that contains $\M \sm \M_r$ and $R>r$ such that for any compact subset $L$ of $\M_R$, there is $T_0>0$ such for any $T>T_0$ and any $q \in L$, we have
\begin{align}
\frac{1}{T} \ |\{s\in [0,T]  : u_s  q \in \mathcal{U} \}| < \varepsilon.   
\end{align}
\end{prop}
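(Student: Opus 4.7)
The plan is to apply Theorem~\ref{quantitative} (quantitative nondivergence) directly, after making a judicious choice of the open neighborhood $\mathcal{U}$ of $\M \sm \M_r$ and the threshold $R$. The guiding observation is that horizontal saddle connections are preserved by the $U$-flow and their lengths are invariant, so the hypothesis $L \subset \M_R$ translates, via a compactness--continuity argument, into a uniform lower bound on the vertical component of holonomies of all short saddle connections on surfaces in $L$. That uniform lower bound is exactly what is needed to ensure the hypothesis of Theorem~\ref{quantitative} holds uniformly for all $q \in L$ and all $T$ sufficiently large.

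Fix a small $\eta > 0$ and define
$$
\mathcal{U} \df \{q \in \M : \text{there is a saddle connection } \delta \text{ with } \|\hol_q(\delta)\| < r+\eta\}.
$$
Then $\mathcal{U}$ is open (its complement is the closed set of surfaces whose shortest saddle connection has length at least $r+\eta$) and contains $\M \sm \M_r$, since any horizontal saddle connection of length $< r$ witnesses membership in $\mathcal{U}$. Let $C, \alpha$ be the constants of Theorem~\ref{quantitative} and choose $\rho > r+\eta$ large enough that $C((r+\eta)/\rho)^\alpha < \varepsilon$. Set $R > \rho$. Both $\mathcal{U}$ and $R$ depend only on $r$ and $\varepsilon$, as required.

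Now fix a compact $L \subset \M_R$. The heart of the argument is the following uniformity claim: there exists $b_{\min} = b_{\min}(L) > 0$ such that for every $q \in L$ and every saddle connection $\delta'$ of $q$ with $\|\hol_q(\delta')\| < \rho$, writing $\hol_q(\delta') = (a',b')$, one has $|b'| \geq b_{\min}$. We prove this by contradiction: a sequence of counter-examples $(q_n, \delta_n)$ with $q_n \in L$ and $|b_n| \to 0$ would, after extracting a convergent subsequence $q_n \to q_\infty \in L$ and using continuity of saddle connections on nearby surfaces, produce a horizontal saddle connection on $q_\infty$ of length at most $\rho < R$, contradicting $q_\infty \in \M_R$. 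The uniform positive lower bound on the length of the shortest saddle connection over the compact set $L$ is used to rule out $a_n \to 0$.

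With $b_{\min}$ in hand, set $T_0 \df 2\rho/b_{\min}$. For $T > T_0$, any saddle connection $\delta'$ of $q \in L$ satisfies $\max_{s \in [0,T]} \|\hol_{u_s q}(\delta')\| \geq \rho$: if $\|\hol_q(\delta')\| \geq \rho$ this is immediate at $s=0$, and otherwise $|b'| \geq b_{\min}$ and
$$
\|\hol_{u_T q}(\delta')\| \geq |a' + T b'| \geq T b_{\min} - \rho > \rho.
$$
Hence Theorem~\ref{quantitative} applies uniformly for $q \in L$ with the pair $(r+\eta, \rho)$, yielding
$$
\frac{1}{T}|\{s \in [0,T] : u_s q \in \mathcal{U}\}| \leq C\left(\frac{r+\eta}{\rho}\right)^\alpha < \varepsilon,
$$
as desired. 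The main obstacle is the uniformity claim for $b_{\min}$, which crucially uses the assumption $L \subset \M_R$; the remaining steps are a direct application of Theorem~\ref{quantitative}.
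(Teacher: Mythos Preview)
Your approach has a genuine gap: Theorem~\ref{quantitative} only applies for $\rho \in (0,\rho_0]$, where $\rho_0$ is a \emph{fixed} constant depending only on the stratum. You write ``choose $\rho > r+\eta$ large enough that $C((r+\eta)/\rho)^\alpha < \varepsilon$'', but $r>0$ is given in the statement and may be much larger than $\rho_0$; even when $r<\rho_0$, the required inequality forces $\rho > (r+\eta)(C/\varepsilon)^{1/\alpha}$, which for small $\varepsilon$ again exceeds $\rho_0$. So the direct application of Theorem~\ref{quantitative} with the pair $(r+\eta,\rho)$ is not available in general.

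The paper's proof is built around exactly this obstruction. It fixes $\varepsilon'<\rho_0$ with $C(\varepsilon'/\rho_0)^\alpha<\varepsilon$, chooses $t>\log(r/\varepsilon')$, and sets $\mathcal{U}=g_t(K^c)$ where $K$ is the set of surfaces with no saddle connection shorter than $\varepsilon'$. The point is that $q\in\mathcal{U}$ iff $g_{-t}q$ has a saddle connection shorter than $\varepsilon'$, and horizontal saddle connections on $q$ of length $<r$ become, on $g_{-t}q$, horizontal of length $<re^{-t}<\varepsilon'$. After this rescaling one is always in the regime where Theorem~\ref{quantitative} applies with $\rho=\rho_0$ fixed; the change of variables $u_s g_{-t}=g_{-t}u_{se^{2t}}$ converts the estimate back. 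Your compactness argument for the uniform lower bound on the vertical component (your $b_{\min}$) is correct and is precisely the role played by $\eta$ in the paper's proof; what is missing is the geodesic rescaling that brings the length parameters into the window $[ \varepsilon',\rho_0]$ where quantitative nondivergence can be invoked.
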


\begin{proof}
Let $C$, $\rho_0$ and $\alpha$ as in Theorem \ref{quantitative}, let $\varepsilon'<\rho_0$ be small enough so that $C(\frac{\varepsilon'}{\rho_0})^{\alpha}<\varepsilon$, let $K \subset \M$ be the compact set of surfaces whose shortest saddle connection has length $\geq \varepsilon'$, let $t>\log(\frac{r}{\varepsilon'})$, let $R = \rho_0e^t$ and finally let $\mathcal{U} = g_t(K^c)$. Notice that $\M \sm \M_r \subset \mathcal{U}$.  

Now let $L$ be any compact subset of $\M_R$. By compactness, there is $\eta>0$ such that for any $q \in L$ and any saddle connection $\delta$ on $q$ with $\hol_q(\delta)=(x,y)$, we either have $|x|> R$ or $|y|>\eta$. Let $T'_0$ be large enough so that $T'_0e^{t}\eta \geq 2\rho_0$. This condition ensures for any $q \in L$, the length of any saddle connection on $g_{-t}q$ will eventually become larger than $\rho_0$ after applying the horocycle flow up to time $T_0$. It follows from Theorem \ref{quantitative} that for any $T>T'_0$ and any $q \in L$
\begin{align*}
\frac{1}{T} \ |\{s\in [0,T]  : u_s  g_{-t} q \in K^c \}| < \varepsilon.  
\end{align*}
Finally, let $T_0 = e^{2t}T_0'$, let $q \in L$ and let $T>T_0$. Then we have
\begin{align*}
    &  \frac{1}{T} \int_0^T  \mathbf{1}_{\mathcal{U}}(u_s q) \, ds = \frac{1}{T} \int_0^T \mathbf{1}_{K^c}(g_{-t}u_s q)\, ds \\ 
    = &\frac{1}{Te^{-2t}} \left|\left\{s \in \left[0,Te^{-2t} \right] : u_s  g_{-t}q \notin K \right\} \right| < \varepsilon.
\end{align*}
\end{proof}

\subsection{Singular sets and large circles}

We extend the Euclidean norm on $Z_{M}$ to a Euclidean norm on $\mathfrak{R}_{\M}$, \textit{i.e.,} for any $v = x + \mathbf{i} \cdot y \in \mathfrak{R}_{\M}$, we let $\| v \| = \sqrt{ \|x\|^2 + \|y\|^2}$. Notice that $r_{\theta}$ acts on $\mathfrak{R}_{\M}$ by isometry.

\begin{prop}\label{prop: bound}
Let $\N \subset \M$ be an invariant subvariety and let $q \notin \N$. There exists $\tilde{\delta}>0$ such that for all $t > 0$, if there exists $\theta \in [0,2\pi]$ such that $g_tr_{\theta} q = q' \pluscirc v$ with $q' \in \N$ and $v \in \mathfrak{R}^{\perp}_{\N}$, then $ \|  g_{-t} v \| \geq \tilde{\delta}$. 
\end{prop}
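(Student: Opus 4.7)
I would prove this by contradiction, exploiting the $G$-equivariance of the Rel map \eqref{eq: equivariance Rel} to transfer the hypothesis into a statement about $q$ itself being a small Rel perturbation of points of $\N$.

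Suppose no such $\tilde\delta$ exists. Then there are sequences $t_n > 0$, $\theta_n \in [0, 2\pi]$, $q_n' \in \N$, and $v_n \in \mathfrak{R}^{\perp}_{\N}$ with $g_{t_n} r_{\theta_n} q = q_n' \pluscirc v_n$ and $\|g_{-t_n} v_n\| \to 0$. Writing $h_n \df r_{-\theta_n} g_{-t_n} = (g_{t_n} r_{\theta_n})^{-1} \in G$ and applying $h_n$ to both sides, the equivariance \eqref{eq: equivariance Rel} yields
\begin{equation*}
q = h_n \Rel_{v_n}(q_n') = \Rel_{h_n v_n}(h_n q_n') = (h_n q_n') \pluscirc (h_n v_n).
\end{equation*}
Let $w_n \df h_n v_n = r_{-\theta_n} g_{-t_n} v_n$. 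Since the rotation $r_{-\theta_n}$ acts on $\mathfrak{R}_{\M}$ by isometry with respect to the chosen Euclidean norm, $\|w_n\| = \|g_{-t_n} v_n\| \to 0$.

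Next, I would run the identity $q = (h_n q_n') \pluscirc w_n$ backwards: since the domain $\Delta$ of Rel is open and contains $(q, 0)$, for all sufficiently large $n$ the point $\Rel_{-w_n}(q)$ is well-defined and, by continuity of Rel, converges to $q$ as $n \to \infty$. By construction, $\Rel_{-w_n}(q) = h_n q_n'$, and this point lies in $\N$ since $\N$ is $G$-invariant. Thus $q$ is a limit of points in $\N$; as $\N$ is closed, $q \in \N$, contradicting the hypothesis $q \notin \N$.

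The argument is essentially two lines once the equivariance is written down correctly; the only thing to be careful about is verifying that $h_n \in G$ acts the "right way" (i.e., that $(g_t r_\theta)^{-1} = r_{-\theta} g_{-t}$ pulls $g_t r_\theta q$ back to $q$) and that the Rel perturbation $-w_n$ is defined on $q$ for large $n$, which follows from openness of $\Delta$. Note that the hypothesis $v \in \mathfrak{R}^{\perp}_{\N}$ is not needed in this argument; only the $G$-invariance and closedness of $\N$, together with the isometric action of $r_\theta$ on $\mathfrak{R}_{\M}$, are used.
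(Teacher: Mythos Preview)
Your proof is correct and follows essentially the same approach as the paper's: both pull back via $G$-equivariance \eqref{eq: equivariance Rel} to write $q$ as a small Rel perturbation of a point of $\N$, invoke the isometric action of $r_\theta$ on $\mathfrak{R}_{\M}$ to control the norm, and use closedness of $\N$ to reach a contradiction. The paper's version is slightly more direct (it fixes $\tilde\delta$ upfront so that $\Rel_w(q)\notin\N$ for all $\|w\|<\tilde\delta$, avoiding sequences), but the content is identical; your observation that $v\in\mathfrak{R}^{\perp}_{\N}$ is not actually used is also correct.
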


\begin{proof}
Since $q \notin \N$ and $\N$ is closed, it follows by continuity of Rel flows that there is $\tilde{\delta} > 0$ such that for any $w \in  \mathfrak{R}^{\perp}_{\N}$ with $\|w\| < \tilde{\delta}$ we have that $\Rel_w(q)$ is well-defined and does not belong to $\N$. Let $t>0$ and assume that there are $\theta \in [0,2\pi]$ and $q' \in \N$ such that $g_tr_{\theta}q = q' \pluscirc v$. 
Let $w \df - r_{-\theta} g_{-t} v.$
If $\| g_{-t} v\| < \tilde{\delta}$ then we also have $\| w \| < \tilde{\delta}$, and thus $q \pluscirc w$ is well-defined. But by 
\eqref{eq: equivariance Rel}, 
$q \pluscirc w = r_{-\theta} g_{-t} q' \in \N$ and we get a contradiction with the definition of $\tilde{\delta}$. This implies that $\| g_{-t} v\| \geq \tilde{\delta}$. 
\end{proof}

Let $n$ be a positive integer. We consider the diagonal action of $\mathrm{GL}_2^+(\R)$ on $\C^n$ where $\mathrm{GL}_2^+(\R)$ acts on each copy of $\C \simeq \R^2$ by linear transformations. For any $v \in \C^n$, we let
\begin{align}\label{eq: def of ellipses}
    E_{t,v} = \{g_tr_{\theta} v : \theta \in [0,2\pi]\}.
\end{align} 
We will need the following:
\begin{lem}\label{lem: little computation}
For any $0\leq \eta_1 \leq \eta_2 \leq 2$ and $0<\varepsilon<1$ we have
\begin{equation}
    \arccos(\varepsilon^2\eta_1-1) - \arccos(\varepsilon^2\eta_2 - 1) \leq \varepsilon (\mathrm{arccos}(\eta_1 - 1) - \mathrm{arccos}(\eta_2 - 1)). 
\end{equation}
\end{lem}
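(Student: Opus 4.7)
The plan is to reduce the claimed inequality to a pointwise comparison of derivatives and then to a routine algebraic check. I would set
$$ f(\eta) \df \arccos(\varepsilon^2\eta - 1) \qquad \text{and} \qquad g(\eta) \df \varepsilon\, \arccos(\eta - 1), $$
both defined and continuous on $[0,2]$ and smooth on $(0,2)$, and both strictly decreasing. The statement to be proved is exactly $f(\eta_1) - f(\eta_2) \leq g(\eta_1) - g(\eta_2)$ for $\eta_1 \leq \eta_2$, which is equivalent to saying that $f - g$ is nondecreasing on $[0,2]$. By the fundamental theorem of calculus, this will follow from the pointwise estimate $f'(\eta) \geq g'(\eta)$ for every $\eta \in (0,2)$.

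Differentiation then gives
$$ f'(\eta) = -\frac{\varepsilon^2}{\sqrt{1 - (\varepsilon^2\eta - 1)^2}}, \qquad g'(\eta) = -\frac{\varepsilon}{\sqrt{1 - (\eta-1)^2}}, $$
so the desired inequality $f'(\eta) \geq g'(\eta)$ rearranges to
$$ \varepsilon^2 \sqrt{1-(\eta-1)^2} \leq \varepsilon \sqrt{1 - (\varepsilon^2\eta-1)^2}. $$
Both sides are nonnegative on $[0,2]$, so I can square. Using $1-(\eta-1)^2 = 2\eta - \eta^2$ and $1-(\varepsilon^2\eta - 1)^2 = 2\varepsilon^2\eta - \varepsilon^4\eta^2$, the squared inequality becomes $\varepsilon^4(2\eta - \eta^2) \leq 2\varepsilon^4 \eta - \varepsilon^6 \eta^2$, which collapses to $\varepsilon^6 \eta^2 \leq \varepsilon^4 \eta^2$; this is immediate from $0 < \varepsilon < 1$.

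I expect no serious obstacle here. The only minor points to address are that the derivatives blow up at the two endpoints $\eta = 0, 2$ (which is harmless, since $f$ and $g$ are continuous on the closed interval and the integrals $\int_{\eta_1}^{\eta_2}(g' - f')\,d\eta$ converge absolutely), and that the expressions under the square roots are genuinely nonnegative on $[0,2]$ so that squaring preserves the inequality (this holds because $\varepsilon^2\eta - 1 \in [-1, 1]$ for $\eta \in [0,2]$ and $\varepsilon \in (0,1)$, and similarly for $\eta - 1$).
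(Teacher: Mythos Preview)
Your proof is correct and is essentially the same as the paper's: both rewrite each side as an integral over $[\eta_1,\eta_2]$ (you via the fundamental theorem of calculus and derivatives, the paper via an explicit substitution $x=\varepsilon^2 u$ in $\int \frac{dx}{\sqrt{2x-x^2}}$) and then compare integrands pointwise, which in either presentation reduces to the trivial inequality $\varepsilon^2\le 1$. Your handling of the endpoint singularities is if anything a bit more explicit than the paper's.
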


\begin{proof}
Use $2 \varepsilon x -\varepsilon^2 x^2 > \varepsilon(2x - x^2)$ in conjunction with the following computation: 
\begin{align*}
    &  \arccos(\varepsilon^2\eta_1 - 1) - \arccos(\varepsilon^2\eta_2 - 1) = \int_{ \varepsilon^2 \eta_1-1}^{\varepsilon^2 \eta_2 -1} \frac{1}{\sqrt{1-x^2}} \ dx \\
     =& \int_{\varepsilon^2 \eta_1}^{\varepsilon^2 \eta_2} \frac{1}{\sqrt{2x -x^2}} \ dx 
     = \varepsilon^2 \int_{\eta_1}^{\eta_2} \frac{1}{\sqrt{2 \varepsilon^2 x -\varepsilon^4 x^2}} \ dx \\
     \leq & \varepsilon \int_{\eta_1}^{\eta_2} \frac{1}{\sqrt{2 x - x^2}} \ dx = \varepsilon(\arccos(\eta_1-1) - \arccos(\eta_2 - 1)).
\end{align*}
\end{proof}

For any $r>0$, let $B(r)$ be the ball of radius $r$ in $\R^n$. Let $T, \delta>0$ and define
\begin{align*} 
B(T,\delta) \df \{x+ \mathbf{i}\cdot y \in \C^n  : (x,y) \in B(T) \times B(\delta) \}.
\end{align*} 

\begin{prop}\label{prop: ellipses and boxes}
Let $v \in \C^n$, let $T,\delta>0$ and let $0<\varepsilon < 1$. If $E_{t,v}$ is not contained in $B(T,\delta)$ then 
\begin{align*}
    | \{ \theta \in [0, 2\pi]  :  g_t r_{\theta} v \in B(\varepsilon T, \varepsilon \delta)\} | < \varepsilon | \{ \theta \in [0, 2\pi]  :  g_t r_{\theta}  v \in B(T,\delta) \} |.
\end{align*}
\end{prop}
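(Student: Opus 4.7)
The plan is to express both sides of the inequality as elementary arc-length differences on the unit circle and then reduce to Lemma \ref{lem: little computation}. Writing $v = X_0 + \mathbf{i} Y_0$ with $X_0, Y_0 \in \R^n$, set $X(\theta) = X_0 \cos\theta - Y_0 \sin\theta$ and $Y(\theta) = X_0 \sin\theta + Y_0 \cos\theta$, so that $\|X(\theta)\|^2$ and $\|Y(\theta)\|^2 = \|v\|^2 - \|X(\theta)\|^2$ are sinusoids of the form $M \pm R\cos(2\theta - \phi)$, where $M = \|v\|^2/2$ and $R^2 = \tfrac{1}{4}(\|X_0\|^2 - \|Y_0\|^2)^2 + \langle X_0, Y_0\rangle^2$. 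Cauchy-Schwarz on $X_0, Y_0 \in \R^n$ yields $M^2 - R^2 = \|X_0\|^2\|Y_0\|^2 - \langle X_0, Y_0\rangle^2 \geq 0$, so $M \geq R$. Putting $\psi = 2\theta - \phi$, the condition $g_t r_\theta v \in B(T,\delta)$ is equivalent to $\cos\psi \in [\sigma^*, \tau^*]$ where $\sigma^* = \max(-1, (M - e^{2t}\delta^2)/R)$ and $\tau^* = \min(1, (e^{-2t}T^2 - M)/R)$, and a standard computation gives $N(v) := |\{\theta \in [0,2\pi] : g_t r_\theta v \in B(T,\delta)\}| = 2(\arccos\sigma^* - \arccos\tau^*)$; the analogous formula with $(\sigma'^*, \tau'^*)$ holds for the smaller box.

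I would then pass to $\eta = 1 + \cos\psi \in [0, 2]$ and $\tilde\sigma^* = 1 + \sigma^*, \tilde\tau^* = 1 + \tau^*$ (likewise for the scaled versions). Direct calculation yields the key identities
\begin{align*}
    \tilde\sigma' - \varepsilon^2 \tilde\sigma = (1-\varepsilon^2)(M+R)/R \geq 0, \quad \varepsilon^2 \tilde\tau - \tilde\tau' = (1-\varepsilon^2)(M-R)/R \geq 0,
\end{align*}
the second inequality using $M \geq R$. In the case $\tilde\tau \leq 2$ (so $\tilde\tau^* = \tilde\tau$ is unclipped), these identities together with the clipping give $[\tilde\sigma'^*, \tilde\tau'^*] \subseteq [\varepsilon^2 \tilde\sigma^*, \varepsilon^2 \tilde\tau^*]$. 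Monotonicity of $\arccos$ combined with Lemma \ref{lem: little computation} applied to $(\eta_1, \eta_2) = (\tilde\sigma^*, \tilde\tau^*) \in [0,2]^2$ yields $N'(v) \leq 2(\arccos(\varepsilon^2 \tilde\sigma^* - 1) - \arccos(\varepsilon^2 \tilde\tau^* - 1)) \leq \varepsilon N(v)$; strictness comes from $(1-\varepsilon^2)(M+R)/R > 0$ combined with the strict inequality $2z - \varepsilon^2 z^2 > 2z - z^2$ (valid for $z > 0$, $\varepsilon < 1$) underlying the proof of the Lemma.

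The main obstacle is the residual case $\tilde\tau > 2$, where the $X$-constraint is vacuous unscaled, the containment in $\eta$ breaks (since $\tilde\tau^*$ is clipped to $2$ while $\tilde\tau'$ may exceed $2\varepsilon^2$), and a naive application of the Lemma introduces a spurious error term $\arccos(2\varepsilon^2 - 1)$. The way around it is to observe that the hypothesis $E_{t,v} \not\subseteq B(T,\delta)$ forces the $Y$-constraint to be binding, i.e.\ $\sigma^* = \sigma \geq -1$, and then to switch to the reflected variable $\eta' = 1 - \cos\psi \in [0, 2]$, in which the unscaled interval becomes $[1 - \tau^*, 1 - \sigma] = [0, 1 - \sigma]$ with $1 - \sigma \in [0,2]$. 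The analogous identity $\sigma' = \varepsilon^2 \sigma + (1-\varepsilon^2) K$ (with $K := M/R \geq 1$) gives $1 - \sigma' - \varepsilon^2(1-\sigma) = (1-\varepsilon^2)(1 - K) \leq 0$, so $[1 - \tau'^*, 1 - \sigma'] \subseteq [0, \varepsilon^2(1-\sigma)]$, and Lemma \ref{lem: little computation} applied with $(\eta_1, \eta_2) = (0, 1 - \sigma)$ closes the argument with the same strictness considerations.
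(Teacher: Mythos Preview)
Your proof is correct and follows essentially the same route as the paper's: both pass to the sinusoidal form $\|X(\theta)\|^2 = M + R\cos(2\theta - \phi)$, convert box membership to a condition $\cos\psi \in [\sigma,\tau]$, establish the interval containment that feeds into Lemma~\ref{lem: little computation} (using $M \geq R$, which the paper obtains as $r \leq R$ from nonnegativity of $\|x_\theta\|^2$ and you obtain more transparently via Cauchy--Schwarz), and read off the inequality. The one organizational difference is that the paper first normalizes to $T \leq \delta$ by replacing $v$ with $r_{\pi/2}v$, which forces $\tau \leq 1$ (their $\eta_2 \leq 2$) from the non-containment hypothesis and thereby eliminates your residual case altogether; your reflection $\eta'=1-\cos\psi$ handles that case directly and is equivalent to the paper's swap.
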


\begin{proof}
We first observe that, up to replacing $T$ and $\delta$ by $e^{-t}T$ and $e^t\delta$, we can assume that $t=0$ and up to replacing $v$ by $r_{\frac{\pi}{2}}v$ we can assume that $T \leq \delta$. Write $v = (x_1 + \mathbf{i} y_1,\dots,x_n + \mathbf{i} y_n)$ and $r_{\theta}v = (x_{\theta},y_{\theta})$. Let $R = \|v\|$ and let $\theta_0 \in [0,2\pi]$ be such that
\begin{align*}
    \cos(2\theta_0)\left( \sum_{i=1}^n x_iy_i \right) - \sin(2\theta_0)\left(\sum_{i=1}^n \frac{x_i^2 - y_i^2}{2} \right) = 0 \\
    \cos(2\theta_0)\left(\sum_{i=1}^n \frac{x_i^2 - y_i^2}{2} \right) + \sin(2\theta_0) \left( \sum_{i=1}^n x_iy_i \right) \geq 0.
\end{align*}
Let $r\geq 0$ such that 
$$r^2 = 2\left(\cos(2\theta_0)\left (\sum_{i=1}^n \frac{x_i^2 - y_i^2}{2} \right) + \sin(2\theta_0) \left( \sum_{i=1}^n x_iy_i \right)\right).$$ Notice that $r \leq R$ and
\begin{align*}
     \| x_{\theta + \theta_0} \|^2  &= \frac{r^2}{2}\cos(2 \theta) + \frac{R^2}{2} \\
     \| y_{\theta + \theta_0} \|^2 &= - \frac{r^2}{2}\cos(2 \theta) + \frac{R^2}{2} . 
     \end{align*}
The degenerate case where $r=0$ is trivial so we can assume $r>0$ and up to replacing $v$ by $r_{\theta_0}v$ we can assume that $\theta_0 = 0$. It follows that for any $T',\delta'>0$, we have that $r_{\theta} v \in B(T',\delta')$ if and only if 
\begin{align*}
   -\frac{2\delta'^2- R^2}{r^2} \leq \cos(2\theta)  \leq \frac{2T'^2- R^2}{r^2}.
\end{align*}
Let $\eta_1 \df -\frac{2\delta^2 - (R^2+r^2)}{r^2}$ and $\eta_2 \df \frac{2T^2 - (R^2-r^2)}{r^2}$. The assumption that $E_{0,v}$ is not contained in $B(T,\delta)$ implies that $\eta_2 \leq 2$. Notice that $\left[-\frac{2(\varepsilon \delta)^2- R^2}{r^2}, \frac{2(\varepsilon T)^2- R^2}{r^2} \right] \subset [\varepsilon^2\eta_1 - 1,\varepsilon^2\eta_2 - 1 ]$. Using Lemma \ref{lem: little computation},
\begin{align*}
    | \{ \theta \in [0, 2\pi] :  g_t r_{\theta} v \in B(\varepsilon T, \varepsilon \delta)\} | &< \arccos(\max(0,\varepsilon^2\eta_1 )-1) - \arccos(\varepsilon^2\eta_2 - 1) \\
    &< \varepsilon (\arccos(\max(0,\eta_1)-1) - \arccos(\eta_2 - 1)) \\
    &< \varepsilon | \{ \theta \in [0, 2\pi] \ | \ g_t r_{\theta} v \in B(T,\delta)\} |.
\end{align*}
\end{proof}

For any $r,\delta>0$ and any invariant subvariety $\N \subset \M$, we denote 
\begin{align*} 
B_{\N}^{\perp}(r,\delta) \df \{x+ \mathbf{i}\cdot y \in \mathfrak{R}_{\M}  :  (x,y) \in B_{\N}^{\perp}(r) \times B_{\N}^{\perp}(\delta) \}
\end{align*} 

\begin{prop}[Proportion of large circles near singular sets]\label{prop: push of circles}
Let $\N \varsubsetneq \M$ be an invariant subvariety, 
let $r>0$, let $K$ be a compact subset of $\N_{\infty}$ and let $\varepsilon>0$. 
There is a neighborhood $\mathcal{U}$ of $Z(K,r)$ such that for any $q \in \M \smallsetminus \N$, there is $t_0>0$ such that for any $t>t_0,$
\begin{align*}
    \frac{1}{2\pi} | \{ \theta \in [0,2\pi] : g_tr_{\theta}q \in \mathcal{U} \} | < \varepsilon.
\end{align*}
\end{prop}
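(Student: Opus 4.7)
The plan is to model the proof on that of Proposition~\ref{frequencyofpassage}, replacing the quadratic polynomial estimate used there by Proposition~\ref{prop: ellipses and boxes}, and using Proposition~\ref{prop: bound} in place of the lower bound on $\|y_t\|$. Assume $\varepsilon \in (0,1)$ and let $c \geq 1$ be a constant to be fixed at the end. Set $R' \df c(r+1)/\varepsilon$, and apply Proposition~\ref{supernondivergenceinjectivity} with $R = R'$, the given $K$, and parameter $\varepsilon/c$, to obtain an open neighborhood $\mathcal{W} \supset K$, a compact set $\widehat{\mathcal{W}} \subset \N$ containing $\mathcal{W}$, and $\delta_1 > 0$, such that $\bar\Phi(p, x, y) \df \Rel_{x + \mathbf{i} \cdot y}(p)$ is well-defined, continuous, and injective on $\widehat{\mathcal{W}} \times \B(R') \times \B(\delta_1)$. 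Let $\delta_0 \df (\varepsilon/c)\delta_1$ and set
\[
\mathcal{U} \df \bar\Phi\bigl(\mathcal{W} \times \mathrm{int}(\B(r+1)) \times \mathrm{int}(\B(\delta_0))\bigr), \qquad \hat\mathcal{U} \df \bar\Phi\bigl(\widehat{\mathcal{W}} \times \mathrm{int}(\B(R')) \times \mathrm{int}(\B(\delta_1))\bigr).
\]
The open set $\mathcal{U}$ contains $Z(K,r)$ (take the imaginary coordinate to be $0$).

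Fix $q \in \M \sm \N$, let $\tilde\delta > 0$ be given by Proposition~\ref{prop: bound}, and set $t_0 \df \log(\sqrt{2}\,R'/\tilde\delta)$. For $t > t_0$ denote $\mathcal{A} \df \{\theta : g_t r_\theta q \in \mathcal{U}\}$, and partition it by the connected components $(\mathcal{J}_j)$ of $\{\theta : g_t r_\theta q \in \hat\mathcal{U}\}$. For each $j$ meeting $\mathcal{A}$, pick $\theta^*_j$ in the intersection, set $(p^{(j)}, v^{(j)}) \df \bar\Phi^{-1}(g_t r_{\theta^*_j} q)$ and $w^{(j)} \df g_{-t} v^{(j)}$; then $\|w^{(j)}\| \geq \tilde\delta$ by Proposition~\ref{prop: bound}. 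By injectivity of $\bar\Phi$ and $G$-equivariance (Eq.~\eqref{eq: equivariance Rel}), on $\mathcal{J}_j$ the Rel-coordinate satisfies $v_\theta^{(j)} = g_t r_{\theta - \theta^*_j} w^{(j)}$. An elementary $L^2$-averaging argument using $\|w^{(j)}\| \geq \tilde\delta$ shows that for $t > t_0$ the ellipse $E_{t, w^{(j)}}$ is not contained in $\B(R', \delta_1)$, and Proposition~\ref{prop: ellipses and boxes}, applied with $T = R'$, $\delta = \delta_1$, and scaling factor $\varepsilon/c$, gives
\[
|\mathcal{A}\cap \mathcal{J}_j| \leq \bigl|\{\theta : v_\theta^{(j)} \in \B(r+1, \delta_0)\}\bigr| < \tfrac{\varepsilon}{c} \bigl|\{\theta : v_\theta^{(j)} \in \B(R', \delta_1)\}\bigr|.
\]

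Summing over $j$ and using disjointness of the $\mathcal{J}_j$, the bound $|\mathcal{A}| < \tfrac{\varepsilon}{c}\sum_j |\{\theta : v_\theta^{(j)} \in \B(R', \delta_1)\}|$ reduces the problem to estimating the total measure of the larger hitting sets. Under the expected uniqueness of the Rel-decomposition $q = \tilde p \pluscirc \tilde w$ with $\tilde p \in \N$ and $\tilde w \in \mathfrak{R}^\perp_\N$ (say $\tilde w = \tilde w_0$), a direct computation gives $w^{(j)} = r_{\theta^*_j} \tilde w_0$, so that $v_\theta^{(j)} = g_t r_\theta \tilde w_0$ is \emph{independent of $j$}. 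All hitting sets then coincide with a common set $\Theta_l$, we have $\mathcal{A} \subset \Theta_s$ (the common smaller hitting set), and by Proposition~\ref{prop: ellipses and boxes}, $|\mathcal{A}| \leq |\Theta_s| < (\varepsilon/c)|\Theta_l| \leq 2\pi\varepsilon/c$. Choosing $c = 1$ then yields $|\mathcal{A}| < 2\pi\varepsilon$.

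The main technical obstacle is this uniqueness claim. A priori, $q$ may admit multiple decompositions $q = \tilde p_i \pluscirc \tilde w_i$ with $\tilde p_i \in \N$ and $\tilde w_i \in \mathfrak{R}^\perp_\N$; the expected statement is that the $\tilde w_i$ all coincide, following from $\mathfrak{R}_\N \cap \mathfrak{R}^\perp_\N = \{0\}$ and the group-action property of Rel. The group-action property however requires the intermediate points to have globally defined Rel (e.g.\ no horizontal saddle connections), which is not automatic for points in $\widehat{\mathcal{W}} \subset \N$. When genuine uniqueness fails, one should instead bound the number of distinct Rel-decompositions of $q$ by a uniform constant $M$ depending only on the pair $(\M, \N)$ --- a finiteness statement that should follow from the proper embedding of $\N$ in $\M$ --- and absorb this multiplicity into $c$ by taking $c \geq M$.
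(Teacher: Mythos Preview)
Your argument on a single connected component $\mathcal{J}_j$ is correct and rather elegant: the map $\theta \mapsto (r_{-\theta}g_{-t}p_\theta,\, r_{-\theta}g_{-t}v_\theta)$ is continuous on the connected set $\mathcal{J}_j$ and lands in the fiber $\{(p,w)\in\N\times\mathfrak{R}^\perp_\N : p\pluscirc w = q\}$, which is discrete because in rank one the map $(p,w)\mapsto p\pluscirc w$ is a local diffeomorphism. Hence $v_\theta = g_t r_{\theta-\theta^*_j}w^{(j)}$ on $\mathcal{J}_j$, and the application of Proposition~\ref{prop: ellipses and boxes} on each component is valid.

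The genuine gap is exactly where you locate it, and your proposed fix does not work. The relevant decompositions $q = p \pluscirc w$ that arise have $w = r_{-\theta}g_{-t}v_\theta$ with $v_\theta \in \B(R')\times\B(\delta_1)$, so $\|w\|$ is only bounded above by something of order $e^{t}\delta_1$. As $t\to\infty$ these decompositions range over an unbounded region of $\N\times\mathfrak{R}^\perp_\N$, and there is no reason the number of them should be bounded by a constant $M$ depending only on $(\M,\N)$, or even uniformly in $t$ for fixed $q$. Proper embedding gives local finiteness, not a global bound of the type you need.

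The paper circumvents the uniqueness issue entirely by bringing in a second ingredient you do not use: a circle analogue of the nondivergence estimate (an extension of \cite[Prop.~12.5]{bainbridge2016horocycle}), giving $|\{\alpha\in I : g_t r_\alpha g_{-t} q' \notin \widehat{\mathcal{W}}\}| < \tfrac{\varepsilon}{8}|I|$ for suitable arcs $I$. The covering sets $\mathcal{J}_\theta$ are then defined via the Rel-coordinate (not as components of the hitting set of $\hat{\mathcal{U}}$), so they are sublevel sets of a simple trigonometric function and hence unions of boundedly many arcs. On each $\mathcal{J}_\theta$, visits to $\mathcal{U}$ are bounded by the ellipse estimate plus the nondivergence estimate, exactly as in Proposition~\ref{frequencyofpassage}; one then extracts a bounded-multiplicity subcover and sums. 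The missing circle-nondivergence input is what replaces your uniqueness hypothesis.
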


\begin{proof}
Let $R>0$ be large enough so that
\begin{align*}
    R > \frac{8}{\varepsilon}r,
\end{align*}
and 
let $\tilde{\delta}$ be as in Proposition \ref{prop: bound}.  A straightforward extension of \cite[Prop 12.5]{bainbridge2016horocycle} to arbitrary rank one invariant subvarieties gives the following analogue of Proposition \ref{supernondivergenceinjectivity}:
There exist $t_0>0$, $\delta<\frac{\tilde{\delta}}{\sqrt{2}}$, a neighborhood $\mathcal{W}$ of $K$ and a compact set $\widehat{\mathcal{W}} \subset \N$ such that 
\begin{itemize}
\item
the map
\begin{align*}
    \bar \Phi: \widehat{\mathcal{W}} \times \B(R) \times \B(\delta) \to \M, \ \ \bar \Phi (q,x,y) \df \Rel_{x+\mathbf{i} \cdot y}(q)
\end{align*}
\noindent is well-defined, continuous and injective; 
\item 
for any $t>t_0$ and any interval $I \subset J \df \left[\frac{\pi}{2}-\frac{\varepsilon\pi}{4},\frac{\pi}{2}+\frac{\varepsilon\pi}{4}\right] \cup \left[\frac{3\pi}{2}-\frac{\varepsilon\pi}{4},\frac{3\pi}{2}+\frac{\varepsilon\pi}{4} \right] $, if  $q' \in \N$ and $\theta \in I$ satisfy $g_tr_{\theta}  q' \in \mathcal{W}$, then
\begin{align}\label{eq: small portion}
    |\{ \alpha \in I  :  g_tr_{\alpha}g_{-t} q' \notin \widehat{\mathcal{W}} \}| < \frac{\varepsilon}{8}|I|.
\end{align} 
\end{itemize}
Let
\begin{align*}
    \mathcal{V} \df \mathcal{W} \times \mathrm{int} (\B(\frac{\varepsilon}{8} R) \times \B(\frac{\varepsilon}{8} \delta)) \ \ \ \text{ and } \ \ \ \mathcal{U} = \bar \Phi(\mathcal{V}). 
\end{align*}

Since $\M$ has rank one, and since $\bar \Phi$ is a homeomorphism onto its image, $\mathcal{U}$ is open in $\M$, and contains $Z(K,r)$. Let $t>t_0$ and let
\begin{align*}
    I_{\mathcal{U},q} \df \{ \theta \in [0,2\pi] : g_tr_{\theta}q \in \mathcal{U}\}.
\end{align*}
We need to verify that $|I_{\mathcal{U},q}| <2\pi \varepsilon$. Since $|J|<\varepsilon \pi$, it is enough to show that $|I_{\mathcal{U},q} \cap J^c | <\pi \varepsilon$. For any $\theta \in I_{\mathcal{U},q}$, let $(q_{\theta}, x_{\theta},y_{\theta}) \in \mathcal{V}$ such that $g_tr_{\theta}q = \bar{\Phi}(q_{\theta}, x_{\theta},y_{\theta})$ and define 
\begin{align*}
    \mathcal{I}_{\theta} &\df \theta + \{ \alpha \in [0,2\pi]  :  g_t r_{\alpha} g_{-t}(x_{\theta} + \mathrm{i} y_{\theta}) \in B_{\N}^{\perp}(\frac{\varepsilon}{8}R,\frac{\varepsilon}{8}\delta) \}, \\
    \mathcal{J}_{\theta} &\df \theta + \{ \alpha \in [0,2\pi]  :  g_t r_{\alpha} g_{-t}(x_{\theta} + \mathrm{i} y_{\theta}) \in B_{\N}^{\perp}(R,\delta)\}.
\end{align*}

By definition, we know that $\|g_{-t}(x_{\theta} + \mathrm{i} y_{\theta})\|>\tilde{\delta}$. In particular, there is $\alpha \in [0,2\pi]$ such that $r_{\alpha} g_{-t}(x_{\theta} + \mathrm{i} y_{\theta}) = x + \mathrm{i} y$ with $\|y\|^2>\frac{\tilde{\delta}^2}{2}>\delta^2$ and thus $\alpha \notin \mathcal{J}_{\theta}$. Identifying $\C^n$ and $\mathfrak{R}^{\perp}_{\N}$ with a $\mathrm{GL}^+(2,\R)$-equivariant isometry, it follows from Proposition \ref{prop: ellipses and boxes} that 
\begin{align}\label{eq: small portion bis}
    | \mathcal{I}_{\theta} | < \frac{\varepsilon}{8} | \mathcal{J}_{\theta} |.
\end{align}

We claim that if $\alpha \in \mathcal{J}_{\theta} \smallsetminus \mathcal{I}_{\theta}$ then either $g_t r_{\alpha-\theta} g_{-t} q_{\theta} \notin \widehat{\mathcal{W}}$ or $g_t r_{\alpha} q \notin \mathcal{U}$. Indeed, let $\alpha \in \mathcal{J}_{\theta}$ and suppose $g_t r_{\alpha-\theta} g_{-t} q_{\theta} \in \widehat{\mathcal{W}}$. It follows from \eqref{eq: equivariance Rel} that
\begin{align*}
    g_tr_{\alpha}q &= (g_tr_{\alpha-\theta}g_{-t}) g_t r_{\theta}q \\
    &= (g_tr_{\alpha-\theta}g_{-t}) (q_{\theta} \pluscirc (x_{\theta} + \mathbf{i} y_{\theta})) \\
    &= g_tr_{\alpha-\theta}g_{-t} q_{\theta} \pluscirc g_tr_{\alpha-\theta}g_{-t}(x_{\theta} + \mathbf{i} y_{\theta}),
\end{align*}
\noindent and thus by injectivity of $\bar{\Phi}$, it follows that $g_tr_{\alpha}q \notin \mathcal{U}$, which proves our claim. 
Using \eqref{eq: small portion} and \eqref{eq: small portion bis}, 
we obtain: 
\begin{align*}
    |I_{\mathcal{U},q} \cap \mathcal{J}_{\theta} \cap J^c| &= |I_{\mathcal{U},q} \cap \mathcal{I}_{\theta} \cap J^c| +  |I_{\mathcal{U},q} \cap \mathcal{J}_{\theta} \smallsetminus \mathcal{I}_{\theta} \cap J^c | \\
    &\leq |\mathcal{I}_{\theta}| + \left| \left\{\alpha \in \mathcal{J}_{\theta} \cap J^c : g_t r_{\alpha - \theta} g_{-t} q_{\theta} \notin \widehat{\mathcal{W}} \right \} \right| \\ 
    &< \frac{\varepsilon}{8} | \mathcal{J}_{\theta} | + \frac{\varepsilon}{8} | \mathcal{J}_{\theta} \cap J^c | \leq  \frac{\varepsilon}{4} | \mathcal{J}_{\theta} |.
\end{align*}

 We have shown that the set $I_{\mathcal{U},q} \cap J^c$ is covered by the $(\mathcal{J}_{\theta})_{\theta \in I_{\mathcal{U},q}}$, and  from  this cover we can extract a sub-cover  $(\mathcal{J}_{\theta_l})_{l}$ such that any point is covered at most twice. This yields
$$
    |I_{\mathcal{U},q} \cap J^c| \leq \sum_l |I_{\mathcal{U},q} \cap \mathcal{J}_l \cap J^c | 
    < \frac{\varepsilon}{2} \sum_l | \mathcal{J}_l | \leq   \varepsilon \pi.
$$
\end{proof}

\section{From measure classification to equidistribution of orbits}

For any rank-one invariant subvariety $\M$, we denote by $\mathcal{S}_{\M}$ the set of surfaces $q \in \M$ without horizontal saddle connections for which there is a proper invariant subvariety $\N \varsubsetneq \M$ and $x \in Z_{\N}^{\perp}$ such that $\Rel_x(q) \in \N$. By Proposition 1.3 in \cite{eskin2018invariant}, there are at most countably many invariant subvarieties of $\M$. We assume there are countably many, and denote them by $\left(\N^{(i)} \right)_{i \in \mathbb{N}}$ (the reader will have no difficulty in adapting our arguments to the case in which there are finitely many). With this notation, 
\begin{align*}
    \mathcal{S}_{\M} = \underset{(i,j) \in \mathbb{N} \times \mathbb{N}}{\bigcup} Z\left(\N^{(i)},j \right);
\end{align*}
\noindent in particular, the set $\mathcal{S}_{\M}$ is measurable.

The following result is independent of the weak classification of $U$-invariant measure and holds in arbitrary rank-one invariant subvarieties. It contrasts with a surprising phenomenon in $\mathcal{H}(1,1)$ discovered in \cite[Theorem 1.2]{chaika2020tremors}. 

\begin{thm}\label{preequidistribution}
Let $\M$ be a rank-one invariant subvariety, let $q \in \M_\infty \sm\mathcal{S}_{\M}$ and 

$$
\mu_{T} \df \frac{1}{T} \int_{0}^T u_{s\ast}\delta_q \,  ds. 
$$
Then any weak-$\ast$ limit of $(\mu_T)_{T>0}$ is a $U$-invariant Radon probability measure that assigns zero measure to $\M \sm \M_\infty$ and to $\mathcal{S}_{\M}$. 
\end{thm}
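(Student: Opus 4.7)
The plan is to verify the three assertions separately: the weak-$\ast$ limits are probability measures, they are $U$-invariant, and they assign zero mass to $\M \setminus \M_\infty$ and to $\mathcal{S}_\M$.

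First, since $q \in \M_\infty$ lies in $\M_r$ for every $r > 0$, part (II) of Theorem \ref{quantitative} applied to $L = \{q\}$ yields, for each $\varepsilon > 0$, a compact set $K \subset \M$ with $\mu_T(K) \geq 1 - \varepsilon$ for all sufficiently large $T$. This rules out any escape of mass and makes every weak-$\ast$ limit $\mu$ a probability measure. The $U$-invariance of $\mu$ follows from the standard Fubini-type estimate $|\int f \, d(u_{t*}\mu_T - \mu_T)| \leq 2|t|\,\|f\|_\infty / T$ for each $f \in C_c(\M)$ and each fixed $t$.

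To kill the mass on $\M \setminus \M_\infty = \bigcup_{n \geq 1}(\M \setminus \M_{1/n})$, I would fix $n$ and $\varepsilon > 0$ and apply Proposition \ref{frequencyofpassagesaddleconnection} with $r = 1/n$ and $L = \{q\}$ (which is allowed since $q \in \M_R$ for every $R > 0$). This produces an open neighborhood $\mathcal{U}$ of $\M \setminus \M_{1/n}$ such that $\mu_T(\mathcal{U}) \leq \varepsilon$ for $T$ large. The portmanteau inequality for open sets then gives $\mu(\M \setminus \M_{1/n}) \leq \mu(\mathcal{U}) \leq \varepsilon$; letting $\varepsilon \to 0$ and $n \to \infty$ closes this step.

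For the vanishing on $\mathcal{S}_\M$, by countable subadditivity it suffices to show $\mu(Z(\N, r)) = 0$ for each proper invariant subvariety $\N \subsetneq \M$ and each $r > 0$. Since real Rel preserves the property of having a horizontal saddle connection, $Z(\N, r) \setminus Z(\N_\infty, r) \subset \M \setminus \M_\infty$, which already has zero $\mu$-mass. I would then exhaust $\N_\infty$ by compact subsets $K_m$, so that $Z(\N_\infty, r) = \bigcup_m Z(K_m, r)$, and apply Proposition \ref{frequencyofpassage} to each $K_m$: this yields a neighborhood $\mathcal{U}_m$ of $Z(K_m, r)$ and some $R_m > r$ for which the required frequency bound holds on compact subsets of $\M_{R_m} \setminus Z(\N, R_m)$. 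The hypothesis $q \in \M_\infty \setminus \mathcal{S}_\M$ is precisely what is needed: it guarantees $q \in \M_{R_m}$ and, through the identity $\mathcal{S}_\M \cap \M_\infty = \bigcup_{i,j} Z(\N^{(i)}, j) \cap \M_\infty$, also $q \notin Z(\N, R_m)$. Taking $L = \{q\}$ gives $\mu_T(\mathcal{U}_m) < \varepsilon$ for $T$ large, hence $\mu(Z(K_m, r)) \leq \mu(\mathcal{U}_m) \leq \varepsilon$; letting $\varepsilon \to 0$ and then $m \to \infty$ concludes.

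The main obstacle I anticipate is the last step: one must correctly combine the compact exhaustion of $\N_\infty$ with the uniform nondivergence estimate of Proposition \ref{frequencyofpassage}, and verify that the hypothesis $q \notin \mathcal{S}_\M$ translates, via real Rel's preservation of the horizontal saddle connection locus, into $q \notin Z(\N, R)$ for every $R > 0$ and every proper invariant subvariety $\N \subsetneq \M$. Once this is in place, the three propositions from Section~4 plug in directly.
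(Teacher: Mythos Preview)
Your proof is correct and follows essentially the same three-step strategy as the paper: nondivergence for tightness, then vanishing on $\M\setminus\M_\infty$, then vanishing on each $Z(\N,r)$ via Proposition~\ref{frequencyofpassage} with $L=\{q\}$. The only notable difference is that for the second step you invoke Proposition~\ref{frequencyofpassagesaddleconnection} directly, whereas the paper reproves its content inline via the geodesic conjugation $g_{-t}(K_i)\subset K^c$; the two arguments are equivalent, and your packaging is slightly cleaner.
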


\begin{proof} 
Let $\nu$ be a weak-$\ast$ limit of a convergent subsequence $(\mu_{T_n})_{n>0}$. 

\textbf{Step 1.} We show that $\nu(\M)=1$. It is enough to show that for any $\varepsilon>0$, there is a compact $K \subset \M$ such that $\nu(K) \geq 1-\varepsilon$. We take $K$ as in item (II) in Theorem \ref{quantitative} for $r=1$. By assumption $q$ does not have any horizontal saddle connections and in particular, no horizontal saddle connection shorter than $r$. Thus there is  $T_0>0$ such that for any $T>T_0$, 
\begin{align*}
    \frac{1}{T} |\{s \in [0,T]  :  u_s  q \notin K \}| < \varepsilon. 
\end{align*}
It follows that for $n$ for which $T_n > T_0$, we have $\mu_{T_n}(K) \geq 1-\varepsilon$. Since $K$ is compact, by a well-known property of weak-* convergence, we obtain $\nu(K) \geq 1-\varepsilon$. 

\textbf{Step 2.} We now show that $\nu(\M\sm \M_{\infty})=0$. For any $i>0$, let $K_i \subset \M_\infty$ be the set of surfaces whose saddle connections have length  bounded below by $\frac{1}{i}$, and which have horizontal saddle connections, all of whose lengths are  bounded above by $i$. By construction the sets $(K_i)_{i>0}$ exhaust $\M\sm\M_{\infty}$ and thus it is enough to show that for any $i>0$ and any $\varepsilon>0$, $\nu(K_i)<\varepsilon.$ By a well-known property of weak-* convergence, it suffices to show that for each $\varepsilon>0$ and each $i$, there is an open set $\mathcal{U}$ that contains $K_i$, such that $\mu_T(\mathcal{U})<\varepsilon$ for any $T$ large enough. Once again, let $K$ be as in item (II) in Theorem \ref{quantitative} for $r=1$. Since the lengths of all saddle connection are bounded below on the compact set $K$, there is $t>0$ such that $g_{-t}(K_i) \subset K^c$. We denote by $\mathcal{U}$ the set $g_t(K^c)$ and let $q_t = g_{-t}  q$. Since $q_t$ does not have horizontal saddle connection either, there is a $T_0>0$ such that for any $T>T_0$,
\begin{align*}
    \frac{1}{T} |\{s \in [0,T] : u_s  q_t \notin K \}| < \varepsilon. 
\end{align*}

Now, let $T_1$ be large enough so that $e^{-2t}T_1 > T_0$ and let $T>T_1$. Then, using the commutation relation $g_{-t}u_sq = u_{s'} g_tq$, where $s' \df e^{-2t}s$, we obtain
\begin{align*}
    \mu_T(\mathcal{U}) &= \frac{1}{T} \int_0^T  \mathbf{1}_{\mathcal{U}}(u_s  q) \, ds \\ 
    &= \frac{1}{T} \int_0^T  \mathbf{1}_{K^c}(g_{-t} u_s  q) \, ds \\
    &= \frac{1}{Te^{-2t}} \left|\left\{s' \in \left[0,Te^{-2t}\right] :  u_{s'}  q_t \notin K  \right\} \right| < \varepsilon.
\end{align*}

\textbf{Step 3.} We now show that for any invariant subvariety $\N \varsubsetneq \M$ and $r>0$ we have $\nu(Z(\N,r))=0$. By Step 2, it is enough to show that for any $\varepsilon>0$ and any compact $K \subset \N_{\infty}$, there is an open set $\mathcal{U}$ that contains $Z(K,r)$ such that $\mu_T(\mathcal{U})<\varepsilon$ for any $T$ large enough. This follows from  Proposition \ref{frequencyofpassage}, applied to $L = \{q\}$.
\end{proof}

\begin{thm}[Genericity]\label{genericity}
Suppose $\M$ satisfies the weak classification of $U$-invariant measures. The orbit of any surface $q \in \M$ without horizontal saddle connections is generic for some measure as in \S \ref{push}. That is, for any $q \in \M_\infty$, there are $x \in Z_{\M}$, an invariant subvariety $\N \subset \M$ and $q' \in \N_\infty$ such that $q = \Rel_{x}(q')$ and
$$
\frac{1}{T} \int_0^T u_{s\ast} \delta_q \, ds \underset{T \to \infty}{\longrightarrow} \Rel_{x\ast} m_{\N} \ \ \ \ \text{ (w.r.t. the weak-* topology).}
$$

\end{thm}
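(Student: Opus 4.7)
The plan is to reduce the general statement to the special case $q \notin \mathcal{S}_\M$, which I handle using Theorem \ref{preequidistribution} together with the weak classification hypothesis. Consider the set
$$
\mathcal{A}(q) \df \{(\N, x) : \N \subset \M \text{ invariant subvariety},\ x \in Z_\M,\ \Rel_{-x}(q) \in \N\},
$$
which is non-empty since $(\M, 0) \in \mathcal{A}(q)$. Choose $(\N, x) \in \mathcal{A}(q)$ with $\dim \N$ minimal, and set $q' \df \Rel_{-x}(q)$; since real Rel preserves the absence of horizontal saddle connections, $q' \in \N_\infty$. I claim $q' \notin \mathcal{S}_\N$: otherwise there would exist $\N'' \subsetneq \N$ and $y \in Z_{\N''}^\perp \cap Z_\N$ with $\Rel_y(q') \in \N''$, producing $(\N'', x-y) \in \mathcal{A}(q)$ with $\dim \N'' < \dim \N$, a contradiction.

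The core of the proof is then to apply Theorem \ref{preequidistribution} to $q'$ inside $\N$ (note that $\N$ inherits rank one from $\M$). Any weak-$\ast$ limit $\nu$ of $\mu_T^{q'} \df \tfrac{1}{T}\int_0^T u_{s\ast}\delta_{q'}\,ds$ is a $U$-invariant Radon probability measure on $\N$ vanishing on $\mathcal{S}_\N$ and on $\N \setminus \N_\infty$. Ergodic decomposition writes $\nu = \int \nu_\xi\, d\tau(\xi)$, with each $\nu_\xi$ ergodic, $U$-invariant, and saddle-connection-free. Applying the weak classification in $\N$ (established in the next paragraph), each $\nu_\xi = \Rel_{y_\xi \ast} m_{\N_\xi}$ with $\N_\xi \subset \N$ and $y_\xi \in Z_\N$. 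If $\N_\xi \subsetneq \N$, then decomposing $y_\xi = a + b$ with $a \in Z_{\N_\xi}$ and $b \in Z_{\N_\xi}^\perp$ and using that $m_{\N_\xi}$ is invariant under $\Rel_a$ (since $m_{\N_\xi}$ is locally Lebesgue in period coordinates and real Rel acts by translation), we would have $\nu_\xi = \Rel_{b\ast} m_{\N_\xi}$ with support in $\mathcal{S}_\N$, contradicting $\nu(\mathcal{S}_\N)=0$. Hence $\tau$-a.e.\ $\N_\xi = \N$, and by the same Rel invariance $\nu_\xi = m_\N$, so $\nu = m_\N$. Tightness of $(\mu_T^{q'})$ coming from Theorem \ref{quantitative} then upgrades convergence of subsequences to genuine weak-$\ast$ convergence $\mu_T^{q'} \to m_\N$. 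Since $u_s q = \Rel_x(u_s q')$ for every $s$ and $\Rel_x$ is continuous on $\N_\infty$, pushing forward yields $\mu_T^q = \Rel_{x\ast}\mu_T^{q'} \to \Rel_{x\ast} m_\N$, as required.

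The main obstacle is the transfer of weak classification from $\M$ to $\N$ used in the previous paragraph. Given any ergodic $U$-invariant saddle-connection-free probability measure $\mu$ on $\N$, view it as such on $\M$ to write $\mu = \Rel_{x\ast} m_{\N_1}$ with $\N_1 \subset \M$ and $x \in Z_\M$. Splitting $x = a + b$ with $a \in Z_{\N_1}$ and $b \in Z_{\N_1}^\perp$, the Rel invariance of $m_{\N_1}$ gives $\mu = \Rel_{b\ast} m_{\N_1}$. Since $\mathrm{supp}(\mu) \subset \N$, in period coordinates the affine translate $V_{\N_1} + b$ must be contained in the linear model $V_\N$ of $\N$, which forces both $V_{\N_1} \subset V_\N$ (so $\N_1 \subset \N$) and $b \in V_\N \cap Z_\M = Z_\N$. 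This yields the desired form $\mu = \Rel_{b\ast}m_{\N_1}$ with $\N_1 \subset \N$ and $b \in Z_\N$. The delicate point is precisely the inclusion of linear models and the resulting $b \in Z_\N$, where I would need to invoke the equilinear structure of invariant subvarieties and the fact that real Rel acts by translation in period coordinates.
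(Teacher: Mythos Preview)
Your proof follows essentially the same strategy as the paper's: pick $\N$ minimal with $\Rel_{-x}(q)\in\N$, show $q'=\Rel_{-x}(q)\notin\mathcal S_\N$, apply Theorem~\ref{preequidistribution} inside $\N$, decompose any subsequential limit into ergodic components, and use weak classification to force each component to be $m_\N$. Your last paragraph, transferring weak classification from $\M$ to $\N$, is more explicit than the paper, which compresses the same linear--algebra content into the single sentence ``Since $\nu_p$ is supported in $\N$ and $\nu_p(\mathcal S_\N)=0$, it must be that $\nu_p=m_\N$.'' Your period-coordinate argument that $V_{\N_1}+b\subset V_\N$ forces $\N_1\subset\N$ and $b\in Z_\N$ is correct and is exactly what is implicit there.

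There is one genuine gap, in your final step. You assert that $\mu_T^{q'}\to m_\N$ implies $\Rel_{x\ast}\mu_T^{q'}\to\Rel_{x\ast}m_\N$ because ``$\Rel_x$ is continuous on $\N_\infty$.'' But weak-$\ast$ convergence has been established on $\N$ (or $\M$), tested against $C_c$ functions there, and $\N_\infty$ is only a dense $G_\delta$, not open; for $f\in C_c(\M)$ the composition $f\circ\Rel_x$ is bounded continuous on $\N_\infty$ but need not extend to a compactly supported (or even continuous) function on $\N$, so you cannot simply plug it into the convergence $\mu_T^{q'}\to m_\N$. Continuity of $\Rel_x$ alone does not push forward weak-$\ast$ limits through a non-proper map. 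The paper handles this by citing Proposition~\ref{prop: first} (specifically the third bullet, which is \cite[Cor.~4.28]{bainbridge2016horocycle}): once $q'$ is generic for $m_\N$ and $x\in Z^{(m_\N)}$, that result gives directly that $q=\Rel_x(q')$ is generic for $\Rel_{x\ast}m_\N$. You should invoke the same proposition rather than the bare pushforward argument.
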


\begin{remark}
It is instructive to compare Theorem \ref{genericity} with \cite[Thm. 1.4]{chaika2020tremors}, which gives examples of surfaces in the stratum $\H(1,1)$ which are not generic for any measure. Note that $\H(1,1)$ is not of rank one. It should also be noted that the genericity result in \cite[Thm. 11.1]{bainbridge2016horocycle} is stronger than Theorem \ref{genericity}, as it does not assume that $q$ has no horizontal saddle connections. This is possible since it relies on a stronger measure classification result. 
\end{remark}

\begin{proof}
Let $\N$ be the invariant subvariety of smallest dimension amongst the collection of all invariant subvarieties $\mathcal{O}$ such that there is $\xi \in Z_{\M}$ satisfying $\Rel_{\xi}(q) \in \mathcal{O}$. Such a pair $(\N,\xi)$ is unique by Proposition \ref{injectivity}. For any $T>0$, we define
\begin{align*}
    \nu_T \df\frac{1}{T} \int_0^T u_{s\ast} \delta_{\Rel_{\xi}(q)} \, ds.
\end{align*}
\noindent The measure $\nu_T$ is supported on $\N$. By definition of $\N$, we have that $\Rel_{\xi}(q) \notin \mathcal{S}_{\N}$ and since $q$ does not have horizontal saddle connections we have that $\Rel_{\xi}(q) \in \N_{\infty}$. It follows from Theorem \ref{preequidistribution} (applied to $\N$ and $\Rel_{\xi}(q)$) that any limit $\nu$ of a convergent subsequence of $\nu_T$ is a Radon probability measure that assigns zero measure to the set of surfaces with horizontal saddle connections, as well as to  the singular set $\mathcal{S}_{\N}$. We decompose the measure $\nu$ into $U$-ergodic components; that is, 
\begin{align*}
    \nu = \int_{\M} \nu_p \ d\nu(p)
\end{align*}
\noindent where there is a $\nu$-conull set $E$ comprised of surfaces $p \in \M$ for which $\nu_p$ is an ergodic $U$-invariant probability Radon measure that assigns zero measure to $\N \sm \N_\infty$ and to $\mathcal{S}_{\N}$. Let $p \in E$. Since $\M$ satisfies the weak classification of $U$-invariant measures, $\nu_p$ is a measure obtained by pushing a measure $m_{\N'}$ on an invariant subvariety $\N'$ by an element of real Rel. Since $\nu_p$ is supported in $\N$ and $\nu_p(\mathcal{S}_{\N})=0$, it must be that $\nu_p = m_{\N}$. Therefore also $\nu = m_{\N}$, and since $\nu$ was an arbitrary subsequential limit of $\nu_T$, we have that $\nu_T \longrightarrow_{T \to \infty} m_\N$ (with respect to the weak-* topology).

This amounts to saying that $q' \df \Rel_{\xi}(q)$ is generic for $m_{\N}$. From Proposition \ref{prop: first} we obtain that $q = \Rel_{-\xi}(q')$ is generic for $\Rel_{-\xi \ast}m_{\N}$, which is exactly what we wanted with $x = -\xi$. 
\end{proof}

The next result shows that the convergence in Theorem \ref{genericity} holds uniformly on compact sets, outside finitely many obvious exceptions. The statement is modeled on \cite[Thm. 3]{DaniMargulis} and will be useful in \cite{CWY2}. It is new even in the case of  eigenform loci in $\H(1,1)$ studied in \cite{bainbridge2016horocycle}. 

\begin{thm}[Uniform equidistribution]\label{uniform}
Suppose $\M$ satisfies the weak classification of $U$-invariant measures, let $K \subset \M$ be compact, let $\varepsilon >0$ and let $f \in C_c(\M)$. There are finitely many invariant subvarieties $\N^{(1)} , \ldots, \N^{(k)}$ contained in $\M$ and $r>0$ such that for any compact $L \subset K \cap \M_r \sm \bigcup_{i=1}^k Z\left(\N^{(i)},r \right)$, there is $T_0>0$ such that for all $ T> T_0$ and all $q \in L$, we have
\begin{align*}
    \left  | \frac{1}{T} \int_0^T f(u_s  q) \, ds - \int_{\M} f \ dm_{\M} \right | < \varepsilon.
\end{align*}
\end{thm}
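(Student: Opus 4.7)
The plan is to argue by contradiction, adapting the Dani--Margulis strategy. Suppose the theorem fails for some $K$, $\varepsilon$, $f$; I would extract along a sequence of purported counterexamples a weak-$\ast$ limit of horocycle averages, verify (using quantitative nondivergence and the frequency-of-passage estimates from the preceding section) that this limit is a $U$-invariant probability measure charging neither $\M \sm \M_\infty$ nor the singular locus $\mathcal{S}_{\M}$, and then invoke weak classification to conclude that it must be $m_{\M}$, contradicting the defining inequality.

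Concretely, enumerate the (at most countably many) proper invariant subvarieties of $\M$ as $\N^{(1)}, \N^{(2)}, \ldots$ and for each $k$ apply the failure of the theorem with the finite list $\{\N^{(i)}\}_{i=1}^k$ and the parameter $r = k$. This produces $q_k \in K \cap \M_k \sm \bigcup_{i=1}^k Z(\N^{(i)}, k)$ together with a time $T_k$, which we may choose arbitrarily large, such that
$$
\left| \mu_{T_k}^{q_k}(f) - m_{\M}(f) \right| \geq \varepsilon, \qquad \mu_T^{q} \df \frac{1}{T}\int_0^T u_{s\ast} \delta_q \, ds.
$$
(If $K \cap \M_k$ were empty for some $k$ the conclusion would hold vacuously with this choice of $r$.) Passing to a subsequence we may assume $q_k \to q_\infty \in K$ and $\mu_{T_k}^{q_k} \to \mu$ weak-$\ast$. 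Theorem~\ref{quantitative}(I) applied to $\{q_k\}_k \cup \{q_\infty\}$ rules out escape of mass, so $\mu$ is a probability measure, and a standard boundary estimate gives $U$-invariance.

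Next I would establish that $\mu$ gives zero mass to both $\M \sm \M_\infty$ and $\mathcal{S}_{\M}$. For the first, fix $r, \varepsilon' > 0$ and use Proposition~\ref{frequencyofpassagesaddleconnection} to obtain an open neighborhood $\mathcal{U}$ of $\M \sm \M_r$ and $R > r$; since $q_k \in \M_k \subset \M_R$ for $k \geq R$, choosing $T_k$ large enough (for the single-point compact set $L = \{q_k\}$) gives $\mu_{T_k}^{q_k}(\mathcal{U}) < \varepsilon'$, and weak-$\ast$ lower semicontinuity on opens yields $\mu(\M \sm \M_r) \leq \varepsilon'$. Sending $\varepsilon' \to 0$ and then $r \to 0$ gives $\mu(\M \sm \M_\infty) = 0$. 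For the second, fix $i_0$, $r_0$, a compact $K' \subset \N^{(i_0)}_\infty$, and $\varepsilon' > 0$; Proposition~\ref{frequencyofpassage} furnishes $\mathcal{U} \supset Z(K', r_0)$ and $R > r_0$, and for $k \geq \max(i_0, R)$ the inclusions $\M_k \subset \M_R$ and $Z(\N^{(i_0)}, R) \subset Z(\N^{(i_0)}, k)$ guarantee $q_k \in \M_R \sm Z(\N^{(i_0)}, R)$, so the same argument gives $\mu(Z(K', r_0)) \leq \varepsilon'$. Exhausting $\N^{(i_0)}_\infty$ by compact sets, and noting that $Z(\N^{(i_0)} \sm \N^{(i_0)}_\infty, r_0)$ is already contained in $\M \sm \M_\infty$ (real Rel preserves horizontal saddle connections), we conclude $\mu(\mathcal{S}_{\M}) = 0$.

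Decomposing $\mu$ into ergodic $U$-components and applying the weak classification hypothesis, almost every component is of the form $\Rel_{x\ast} m_{\N}$ for some $\N \subset \M$. Components coming from proper $\N \varsubsetneq \M$ are supported in $Z(\N_\infty, \|x\|) \subset \mathcal{S}_{\M}$ and hence contribute nothing, so every component is $m_{\M}$ itself (using, as in the proof of Theorem~\ref{classification}, that $m_{\M}$ is invariant under real Rel). Thus $\mu = m_{\M}$, contradicting $|\mu_{T_k}^{q_k}(f) - m_{\M}(f)| \geq \varepsilon$ in the weak-$\ast$ limit. The main obstacle I anticipate is the third step: organizing the countably many frequency-of-passage estimates (one per parameter tuple $(r, \varepsilon')$ or $(i_0, r_0, K', \varepsilon')$) into a single diagonal choice of times $T_k$. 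This is handled by enumerating parameter tuples and, at stage $k$, imposing only the first $k$ resulting largeness conditions $T_k > T_0(q_k; \cdot)$ together with $T_k > k$; the negation of the theorem guarantees arbitrarily large bad times for each $q_k$, so such $T_k$ exist at every stage.
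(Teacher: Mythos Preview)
Your overall strategy matches the paper's: argue by contradiction, extract a weak-$\ast$ limit along a diagonal sequence, and use the frequency-of-passage estimates together with weak classification to force the limit to be $m_{\M}$. The structure of Steps 1--3 and the ergodic-decomposition conclusion are exactly right.

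There is, however, a genuine circularity in how you organise the diagonal choice. The negation of the theorem, applied at stage $k$ with list $\{\N^{(i)}\}_{i\le k}$ and parameter $r$, does \emph{not} hand you a single point $q_k$ admitting arbitrarily large bad times. It gives a compact set $L_k \subset K \cap \M_r \sm \bigcup_{i\le k} Z(\N^{(i)},r)$ such that for every $S$ there exist $T>S$ and $q\in L_k$ (depending on $S$) with the inequality failing. So you cannot first fix $q_k$, then compute $T_0(q_k;\cdot)$ from Propositions~\ref{frequencyofpassage} and~\ref{frequencyofpassagesaddleconnection} with $L=\{q_k\}$, and finally demand $T_k$ exceed these thresholds: the point $q_k$ you eventually obtain from the negation may be different from the one you started with.

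The paper breaks this circularity by reversing the order. At stage $n$ it \emph{first} applies the two propositions (for the first $n$ exhaustion levels of each $\N^{(i)}$, $i\le n$) to obtain open sets $\mathcal{U}_n$ and, crucially, the output parameters $R_n$; it then invokes the contradicting assumption with $r_n=\max(R_1,\dots,R_n)$ rather than with $r=n$. This guarantees that the resulting compact $L$ lies in $\M_{R_i}\sm\bigcup_{j\le i}Z(\N^{(j)},R_i)$ for every $i\le n$, so the propositions apply to the \emph{whole} of $L$ and yield thresholds $S_1,\dots,S_n$ uniform over $L$. Only then does one set $S=\max(S_1,\dots,S_n,n)$ and use the negation to extract a pair $(q_n,T_n)$ with $T_n>S$. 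Your choice $r=k$ is too small in general (nothing ensures $R_j\le k$ for all relevant $j$), and your thresholds must come from the full compact $L_k$, not from the yet-undetermined singleton $\{q_k\}$. With these two adjustments your argument goes through and coincides with the paper's.
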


\begin{proof}
Let $\left(\N^{(i)} \right)_{i\in \mathbb{N}^{\ast}}$ be the collection of all invariant subvarieties properly contained in $\M$ (if the collection is finite we index it so that $\N^{(i)}$ is eventually constant). Suppose by way of contradiction that there are a compact set $K \subset \M$, a function $f \in C_{c}(\M)$ and $\varepsilon_0>0$  such that the following  holds:

\medskip 

\textbf{Contradicting assumption: } {\em For all $n \geq 1$ and $r>0$, there is a compact $L \subset K \cap \M_r \sm \bigcup_{i =1}^n Z\left(\N^{(i)},r\right)$ such that for all $S>0$, there is $T>S$ and $q \in L$ such that:
\begin{align}\label{eq: epsilon}
     \left| \frac{1}{T} \int_0^T f(u_s  q) \ ds - \int_{\M} f \ dm_{\M}  \right| \geq \varepsilon_0.
\end{align}
}

\medskip

For any $i \in \mathbb{N}$, let $(K_{i,j})_{j \in \mathbb{N}}$ be an exhaustion by compact subsets of $\N^{(i)}_\infty$ (the set of surfaces without horizontal saddle connections in $\N^{(i)}$), let 
\begin{align*}
    K_n \df \underset{i\leq n} \bigcup Z(K_{i,n},n), 
\end{align*} 
\noindent and let $L_n$ be the compact subset comprised of surfaces that have a horizontal saddle connection of length at most $n$ and whose shortest saddle connection is longer than $\frac{1}{n}$. We claim that there is a sequence of surfaces $(q_n)_{n \in \mathbb{N}}$ in $K$, a sequence of open sets $(\mathcal{U}_n)_{n \in \mathbb{N}}$, and a sequence of times $(T_n)_{n\in\mathbb{N}}$ with $T_n > n$ such that:

\begin{enumerate}
    \item[(i)] for any $n$, $K_n \cup L_n \subset \mathcal{U}_n$;
    \item[(ii)] for any $n$, 
        \begin{align}\label{eq: this will contradict}
            \left| \frac{1}{T_n} \int_0^{T_n} f(u_s  q_n) \, ds - \int_{\M} f \ dm_{\M} \right | \geq \varepsilon_0;
        \end{align}
    \item[(iii)] for any $n>0$, for all $0<i\leq n$,
        \begin{align}\label{littletime}
            \frac{1}{T_n} |\{s \in [0,T_n]  :  u_s q_n \in \mathcal{U}_i \}| \leq \frac{1}{i}. 
        \end{align}
\end{enumerate}

Indeed, for any $n \geq 1$ and any $0< i\leq n$, let $\mathcal{U}_{i,n}$ and $R_{i,n}>n$ be as in Proposition \ref{frequencyofpassage} for $K=K_{i,n}$, $r=n$ and $\varepsilon = \frac{1}{2n^2}$. Let also $\mathcal{W}_n$ and $R_n'$ be be the $\mathcal{U}$ and $R$ given by Proposition \ref{frequencyofpassagesaddleconnection} for $\varepsilon=\frac{1}{2n}$ and $r=n$. Now, define $R_n = \max(R_{1,n},\ldots,R_{n,n}, R_n')$ and $\mathcal{U}_{n} = \bigcup_{i \leq n} \mathcal{U}_{i,n} \cup \mathcal{W}_n$. By construction, $\mathcal{U}_n$ contains $K_n \cup L_n$ and satisfies the following property: for any compact subset $L$ of $\M_{R_n} \sm \bigcup_{i\leq n} Z \left(\N^{(i)},R_{n} \right)$, there is $S_n>0$ such that 
\begin{align}\label{not much}
   \forall q \in L, \  \forall T>S_n, \  \frac{1}{T} |\{t \in [0,T]  :  u_s q \in \mathcal{U}_{n} \}|
  \leq \frac{1}{n}. 
\end{align} 

Now, let $n \geq 1$, let $r_n = \max(R_1,\dots, R_{n})$ and let $L$ be as in the contradicting assumption for $n$ and $r_n$. For any $i \leq n$, the set $L$ is a compact subset of $\M_{R_i} \sm \bigcup_{j\leq n} Z\left(\N^{(j)},R_{i} \right)$ and let $S_i$ be as in \eqref{not much}. We define $S = \max(S_1,\dots,S_n,n)$. By the contradicting assumption, there is $q_n \in L$ together with $T_n>S$ such that \eqref{eq: this will contradict} is satisfied. By definition of $T_n$, the surface $q_n$ satisfies \eqref{littletime}. Since $r_n>n$, this concludes the construction of $(q_n)_{n \in \mathbb{N}}$. 

Now define a sequence of measures by 
\begin{align*}
    \mu_n \df \frac{1}{T_n} \int_0^{T_n} \delta_{u_s q_n} \, ds.
\end{align*}

Our goal is to show that this sequence of measures weak-* converges to $m_{\M}$, which will contradict \eqref{eq: this will contradict}. By passing to a subsequence, we can suppose that $(\mu_n)_{n>0}$ converges to some measure $\nu$, and we need to show $\nu = m_\M$. We follow the strategy used in the proof Theorem \ref{preequidistribution}.  

\textbf{Step 1}. We show that $\nu(\M)=1$. It is enough to show that for any $\varepsilon>0$, there is a compact $K_0 \subset \M$ such that $\mu_n(K_0)\geq 1-\varepsilon$ for any $n$ large enough. We can choose $K_0$ and $T_0$ as in item (I) in Theorem \ref{quantitative} for $r=1$ and $L=K$. For any $n>T_0$ we have $T_n>T_0$ and thus
\begin{align*}
    \mu_n(K_0) = \frac{1}{T_n} |\{ t \in [0,T_n]  : u_t  q_n \in K_0 \}| \geq 1-\varepsilon. 
\end{align*}

\textbf{Step 2.} We now want to show that $\nu(\M\sm\M_{\infty})=0$. By construction, the $(L_i)_{i>0}$ exhaust $\M\sm\M_{\infty}$ and thus it is enough to show that for any $\varepsilon>0$ and $i>0$, there is an open set $\mathcal{U}$ that contains $L_i$ and such that $\mu_n(\mathcal{U})<\varepsilon$ for any $n$ large enough. Let
$$
N > \max\left(i,\frac{1}{\varepsilon}\right)
$$ 
\noindent and set $\mathcal{U} = \mathcal{U}_N$. By (i) and the definition of $N$, $\mathcal{U}_N$ contains $L_i$ and for any $n>N$ we have from \eqref{littletime} that
\begin{align*}
    \mu_n(\mathcal{U}) = \frac{1}{T_n} |\{t \in [0,T_n] : u_s q_n \in
    \mathcal{U}_N  \}| \leq  \frac{1}{N} < \varepsilon. 
\end{align*} 

\textbf{Step 3.} We now prove that if $\N \varsubsetneq \M$ is an invariant subvariety and $r>0$, then $\nu(Z(\N,r))=0$. Let $i>0$ such that $\N = \N^{(i)}$. We recall that we had an exhaustion by nested compact sets $(K_{i,j})_{j>0}$ of $\N^{(i)}_{\infty}$. It is enough to show that for any $\varepsilon>0$ and $j>0$, there is a open set $\mathcal{U}$ that contains $Z(K_{i,j},r)$ and such that $\mu_n(\mathcal{U})<\varepsilon$ for any $n$ large enough. Let 
$$
N > \max\left(i,j,r, \frac{1}{\varepsilon}\right)
$$
and set $\mathcal{U} = \mathcal{U}_N$. By (i) and the definition of $N$, $\mathcal{U}_N$ contains $Z(K_{i,j},r)$ and for any $n>N$ we have from \eqref{littletime} that
\begin{align*}
    \mu_n(\mathcal{U}) = \frac{1}{T_n} |\{t \in [0,T_n] : u_s q_n \in \mathcal{U}_N  \}| \leq  \frac{1}{N+1} < \varepsilon. 
\end{align*} 

\textbf{Step 4.} Since for any $n>0$, we have $T_n > n$, the measure $\nu$ is a $U$-invariant Radon measure. It follows from Step 1 that it is a probability measure, from Step 2 that it assigns measure zero to the set of surfaces that have a horizontal saddle connection and from Step 3 that it assigns measure zero to the set $\mathcal{S}_{\M}$. Let
\begin{align*}
    \nu = \int_{\M} \nu_p \ d\nu(p)
\end{align*}
\noindent be the decomposition of $\nu$ into $U$-invariant ergodic measures. There is a $\nu$-conull set $E$ of surfaces $p \in \M$ for which $\nu_p$ is an ergodic $U$-invariant probability Radon measure that assigns measure zero to the set of surfaces with horizontal saddle connections and such that $\nu_p(\mathcal{S}_{\M})=0$. By the weak classification of $U$-invariant measure, for every $p \in E$ we have $\nu_p= m_{\M}$, and hence $\nu = m_{\M}$. This is a contradiction to \eqref{eq: this will contradict}, concluding the proof.
\end{proof}

\section{Equidistribution results for sequences of measures}
In this section we state and prove several equidistribution results regarding limits of sequences of measures. The first result extends \cite[Thm. 1.5]{bainbridge2016horocycle}. 

\begin{thm}\label{pushbyrel} 
Suppose $\M$ satisfies the weak classification of $U$-invariant measures, let $\mathcal{L} \subset \M$ be an invariant subvariety and let $x \in Z_{\M}$. There are an invariant subvariety $\N \subset \M$ that contains $\mathcal{L}$ and $x \in Z_{\N}$ such that, in the weak-* topology, ${\Rel_{tx}}_{\ast}m_{\mathcal{L}} \to m_{\N}$ as $t \to \infty$. 
\end{thm}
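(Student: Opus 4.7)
To prove Theorem \ref{pushbyrel}, I would identify $\N$ explicitly and show that every subsequential weak-$*$ limit of $\mu_t \df {\Rel_{tx}}_{*} m_{\mathcal{L}}$ as $t \to \infty$ equals $m_\N$. Define $\N$ to be an invariant subvariety of smallest dimension satisfying $\mathcal{L} \subset \N$ and $x \in Z_\N$; such $\N$ exists because $\M$ itself has this property. Since $x \in Z_\N$ and $\mathcal{L} \subset \N$, the set $\Rel_{tx}(\mathcal{L})$ is contained in $\N$ wherever defined, so each $\mu_t$ is a Borel probability measure supported on $\N$.

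The essential observation is the identity
\begin{equation*}
{g_s}_{*} \mu_t = \mu_{e^s t},
\end{equation*}
which follows from the $G$-invariance of $m_{\mathcal{L}}$, from \eqref{eq: equivariance Rel}, and from $g_s x = e^s x$ (since $x$ is horizontal). In particular, $\mu_t = {g_{\log t}}_{*} \mu_1$ for $t > 0$, so the family $\{\mu_t\}_{t \geq 1}$ is obtained by applying the geodesic flow to the single measure $\mu_1 = {\Rel_x}_{*} m_{\mathcal{L}}$. Since $U$ commutes with $\Rel_x$ on saddle-connection-free surfaces, $\mu_1$ is $U$-invariant, and since $A$ normalizes $U$, so is each $\mu_t$. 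Applying Theorem \ref{quantitative} uniformly to this $U$-invariant family then gives tightness of $\{\mu_t\}_{t \geq 1}$, so subsequential weak-$*$ limits exist.

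Let $\nu = \lim_n \mu_{t_n}$ with $t_n \to \infty$; then $\nu$ is a $U$-invariant probability measure supported on $\N$. Adapting Steps~2 and~3 from the proof of Theorem \ref{preequidistribution}, and using Proposition \ref{frequencyofpassagesaddleconnection} and Proposition \ref{frequencyofpassage} with test functions propagated through the $g_s$-family via $\mu_t = {g_{\log t}}_{*} \mu_1$ and the $U$-invariance of $\mu_1$, I would establish $\nu(\N \sm \M_\infty) = 0$ and $\nu(\mathcal{S}_\N) = 0$. Decomposing $\nu = \int \nu_p\, d\nu(p)$ into $U$-ergodic components, for $\nu$-a.e.\ $p$ the measure $\nu_p$ is saddle-connection-free and vanishes on $\mathcal{S}_\N$. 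By the weak classification of $U$-invariant measures on $\M$, each such $\nu_p = {\Rel_{y_p}}_{*} m_{\N_p}$; the vanishing on $\mathcal{S}_\N$ then forces $\N_p = \N$ and $y_p \in Z_\N$, so $\nu_p = m_\N$ (using $Z_\N$-invariance of $m_\N$), and hence $\nu = m_\N$. Since every subsequential limit equals $m_\N$, the full limit $\mu_t \to m_\N$ follows.

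The main obstacle will be Step 3, namely bounding the mass assigned by $\mu_t$ to neighborhoods of the horizontal saddle connection locus and of $\mathcal{S}_\N$ uniformly in $t$, so as to conclude vanishing of these masses in the limit. This requires propagating the estimates of Propositions \ref{frequencyofpassage} and \ref{frequencyofpassagesaddleconnection} through the geodesic flow acting on $\mu_1$ and a careful compact exhaustion of $\mathcal{S}_\N$ by sets of the form $Z(K_{i,n}, n)$ with $K_{i,n}$ a compact subset of $\N^{(i)}_\infty$, much as is done in the proofs of Theorems \ref{preequidistribution} and \ref{uniform}.
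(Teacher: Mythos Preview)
Your overall strategy coincides with the paper's: define $\N$ as the smallest invariant subvariety containing $\mathcal{L}$ with $x\in Z_\N$, show every subsequential limit $\nu$ of $\mu_{t_n}$ is a $U$-invariant probability measure on $\N$ giving zero mass to $\N\sm\M_\infty$ and to $\mathcal{S}_\N$, and conclude $\nu=m_\N$ by ergodic decomposition and weak classification. Your observation that ${g_s}_*\mu_t=\mu_{e^st}$ (equivalently $\mu_t={g_{\log t}}_*\mu_1$) is correct and elegant; the paper does not exploit it here but uses essentially the same reduction in the proof of Theorem~\ref{geodesicpush}.

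There is, however, a real gap in your Step~3. To apply Proposition~\ref{frequencyofpassage} for a given $\N'\varsubsetneq\N$, you must produce, for each large $n$, a Birkhoff generic point $q_n$ for $\mu_n$ lying in $\M_R\sm Z(\N',R)$. You do not explain why such a point exists, and this is precisely where the minimality of $\N$ is used. The paper handles this by a dichotomy you do not mention. If $\mathcal{L}\not\subset\N'$, one argues via Lemma~\ref{injectivitybis}: positivity of $\mu_n(Z(\N',R))$ would force $m_{\mathcal{L}}(Z(\N',R+t_n\|x\|))>0$ and hence $\mathcal{L}\subset\N'$, a contradiction; so $\mu_n(Z(\N',R))=0$ and a generic point outside $Z(\N',R)$ exists. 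If $\mathcal{L}\subset\N'$, then by minimality of $\N$ one has $x\notin Z_{\N'}$, and Proposition~\ref{injectivity} applied to the transverse direction $\R x$ shows that for $t_n\|x\|>R$ the set $\Rel_{t_nx}(\mathcal{L}_\infty)$ is disjoint from $Z(\N',R)$, again yielding the required generic point. Your references to Theorems~\ref{preequidistribution} and~\ref{uniform} do not supply this argument, since in those results the starting points are \emph{assumed} to lie outside the relevant singular sets, whereas here that must be proved. Once this dichotomy is inserted, your outline becomes a complete proof identical in substance to the paper's.
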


For the proof of Theorem \ref{pushbyrel} we will need the following:

\begin{lem}\label{injectivitybis}
Suppose $\N_1, \N_2 \varsubsetneq \M$ are two invariant subvarieties. If there is $r>0$ such that $m_{\N_1}(Z(\N_2,r))>0$ then $\N_1 \subset \N_2$. 
\end{lem}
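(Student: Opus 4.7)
The plan is to exploit the $G$-invariance of $m_{\N_1}$ together with the scaling action of the diagonal subgroup on the horizontal Rel space $Z_\M$, in order to show $m_{\N_1}(\N_2) > 0$ and then conclude via ergodicity.

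First, since $g_t$ acts on $H^1(S,\Sigma;\R_x)$ by multiplication by $e^t$, it acts as the scalar $e^t$ on $Z_\M$; hence $g_t$ preserves every linear subspace of $Z_\M$, in particular the $G$-invariant subspace $Z_{\N_2}$ and its orthogonal complement $Z^\perp_{\N_2}$, while mapping $B^\perp_{\N_2}(r)$ to $B^\perp_{\N_2}(e^t r)$. Combining this with \eqref{eq: equivariance Rel} and the $G$-invariance of $\N_2$, a direct computation yields the identity
\begin{equation}\label{eq: plan identity}
g_t^{-1} Z(\N_2, r) = Z(\N_2, e^{-t} r) \quad \text{for every } t \in \R,
\end{equation}
since a presentation $g_t q = \Rel_x(q')$ with $q' \in \N_2$ and $x \in B^\perp_{\N_2}(r)$ rewrites, via \eqref{eq: equivariance Rel}, as $q = \Rel_{e^{-t}x}(g_t^{-1} q')$ with $g_t^{-1}q' \in \N_2$ and $\|e^{-t} x\| \le e^{-t} r$, and the reverse inclusion is symmetric.

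Combining \eqref{eq: plan identity} with the $G$-invariance of $m_{\N_1}$ yields $m_{\N_1}(Z(\N_2, e^{-t} r)) = m_{\N_1}(Z(\N_2, r)) > 0$ for every $t \ge 0$. The next step is to verify that $\bigcap_{r' > 0} Z(\N_2, r') = \N_2$: any $q$ in the intersection admits, for each $r' > 0$, a decomposition $q = \Rel_{x_{r'}}(q'_{r'})$ with $q'_{r'} \in \N_2$ and $\|x_{r'}\| \le r'$; by continuity of the Rel flow on its domain $\Delta$, $q'_{r'} = \Rel_{-x_{r'}}(q) \to q$ as $r' \to 0$, and the closedness of $\N_2$ forces $q \in \N_2$. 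Continuity of measure along the decreasing family $(Z(\N_2, e^{-t} r))_{t \ge 0}$ then gives
\[
m_{\N_1}(\N_2) = \lim_{t \to \infty} m_{\N_1}(Z(\N_2, e^{-t} r)) = m_{\N_1}(Z(\N_2, r)) > 0.
\]

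Finally, since $m_{\N_1}$ is $G$-ergodic (being the unique $G$-invariant probability measure supported on $\N_1$) and $\N_2$ is a closed $G$-invariant subset, $m_{\N_1}(\N_2) > 0$ forces $m_{\N_1}(\N_2) = 1$. The complement $\N_1 \sm \N_2$ is then an open subset of $\N_1$ with $m_{\N_1}$-measure zero, and since $\mathrm{supp}(m_{\N_1}) = \N_1$, this forces $\N_1 \sm \N_2 = \emptyset$, i.e.\ $\N_1 \subset \N_2$. The only step that requires some care is the verification of \eqref{eq: plan identity}, where one must handle the partially defined nature of the Rel flow, but this is routine given \eqref{eq: equivariance Rel} and the $G$-invariance of $\N_2$.
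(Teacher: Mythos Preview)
Your proof is correct and follows essentially the same approach as the paper: both use the identity $g_{-t}Z(\N_2,r)=Z(\N_2,e^{-t}r)$ together with $g_t$-invariance of $m_{\N_1}$, then pass to the intersection and invoke ergodicity and the support characterization $\N_1=\mathrm{supp}\,m_{\N_1}$. The only cosmetic difference is that the paper applies $U$-ergodicity at the outset (using that $Z(\N_2,r)$ is $U$-invariant) to get measure $1$ immediately, whereas you keep the measure merely positive until the end and then apply $G$-ergodicity; your identification $\bigcap_{r'>0}Z(\N_2,r')=\N_2$ is in fact slightly cleaner than the paper's formulation.
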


\begin{proof}
Since $m_{\N_1}$ is $U$-invariant and ergodic, we have that $m_{\N_1}(Z(\N_2,r))=1$. For any $t$, since $m_{\N_1}$  is $g_t$-invariant and $g_{-t}(Z(\N_2,r)) = Z(\N_2,e^{-t}r)$, we have $m_{\N_1}(Z(\N_2,e^{-t}r))=1.$ It follows that $(\N_2)_\infty = \bigcap_{n >0} Z(\N_2,re^{-n})$ also has full measure for $m_{\N_1}$. Since $m_{\N_1}$ assigns zero measure to surfaces with horizontal saddle  connections, we deduce that $m_{\N_1}(\N_2) = 1$. This implies that $\N_2$ contains $\N_1 = \mathrm{supp}\, m_{\N_1}$. 
\end{proof}

\begin{proof}[Proof of Theorem \ref{pushbyrel}]
Let $\mathcal{C}(\mathcal{L},x)$ be the collection of invariant subvarieties $\N$ of $\M$ that contain $\mathcal{L}$ and such that $x \in Z_{\N}$, and let $\N \in \mathcal{C}(\mathcal{L},x)$ be of smallest dimension.
We will show that $\mu_t= {\Rel_{t x}}_{\ast}m_{\mathcal{L}}$ converges to $m_\N$ as $t \to \infty$. 

It is enough to show that if  $\mu_n \df \mu_{t_n}$ is a convergent subsequence then its limit is $m_{\N}$. Denote by $\nu$ the limit of $\mu_n$. By construction, $\mathrm{supp} \, \nu \subset  \N$.  

\textbf{Step 1.} We show that $\nu$ is a probability measure. It is enough to show that for any $\varepsilon>0$ there is compact $K \subset \M$ such that $\mu_n(K) \geq 1-\varepsilon$ for $n$ large
enough. We choose $K$ as in Theorem \ref{quantitative}, item (II) for $r=1$. The measure $m_{\mathcal{L}}$ assigns full measure to surfaces without horizontal saddle connections, and since this set is invariant under $\Rel_{tx}$ we have $\mu_n(\M_{\infty})=1$. Therefore there is $q_n \in \M_\infty$ that is Birkhoff generic for the indicator function $\mathbf{1}_K$ with respect to $\mu_n$. By the choice of $K$, for any $T$ large enough, 
\begin{align*}
    \frac{1}{T} |\{s \in [0,T] : u_s q_n \in K \}| > 1- \varepsilon.
\end{align*}
By Birkhoff genericity, the left-hand side tends to $\mu_n(K)$ as $T \to \infty$ and thus $\mu_n(K) \geq 1-\varepsilon$.   

\textbf{Step 2.} We now show that $\nu(\M\sm \M_{\infty})=0$. For any $i>0$, let $K_i$ be the closure of the set of surfaces whose saddle connection lengths are bounded below by $\frac{1}{i}$, and which have at least one horizontal saddle connection, with the lengths of all horizontal saddle connections bounded above by $i$. By construction the $(K_i)_{i>0}$ exhaust $\M\sm\M_{\infty}$ and thus it is enough to show that for any $i>0$ and any $\varepsilon>0$, there is an open set $\mathcal{U}$ that contains $K_i$ and such that $\mu_n(\mathcal{U}) \leq \varepsilon$ for any $n$ large enough. Once again, let $K$ be as in item (II) in Theorem \ref{quantitative} for $r$ as above. Since the length of saddle connections is bounded below on compact sets and $K$ is compact, there is a $t>0$ such that $g_{-t}(K_i) \subset K^c$, so that
$$
K_i \subset \mathcal{U} \df g_t(K^c).
$$ 

Let $n$ be large enough so that $t_n>t$. We have $\mu_n(\M \sm \M_{re^{t_n}})=0$ and thus there is a surface $q_n$ without horizontal saddle connections shorter than $re^{t_n}$ that is Birkhoff generic for $\mathbf{1}_{K^{c}}$ with respect to $\mu_n$. The surfaces $q_n' \df  g_{-t}  q_n$ do not have horizontal saddle connections shorter that $r$ and thus there is $T_0>0$ such that for any $T>T_0$,
\begin{align*}
    \frac{1}{T} |\{s \in [0,T]  : u_s  q_n' \notin K \}| < \varepsilon. 
\end{align*}

Now, let $T_1$ be large enough so that $e^{-2t}T_1 > T_0$. For any $T>T_1$ we have
\begin{align*}
    &  \frac{1}{T} \int_0^T  \mathbf{1}_{\mathcal{U}}(u_s  q_n) \, ds = \frac{1}{T} \int_0^T \mathbf{1}_{K^c}(g_{-t} u_s q_n) \ ds \\ 
    =& \frac{1}{Te^{-2t}} \left|\left\{s \in \left[0,Te^{-2t} \right] : u_s  q_n' \notin K \right\} \right| < \varepsilon.
\end{align*}
By Birkhoff genericity the quantity on the left-hand side converges to $\mu_n(\mathcal{U})$  as $T \to \infty$ and thus $\mu_n(\mathcal{U})\leq \varepsilon$. 

\textbf{Step 3.} Let $\N' \varsubsetneq \N$ be an invariant subvariety and let $r>0$. We want to show that $\nu(Z(\N',r)=0$. It is enough to show that for any compact subset $K \subset \N'_{\infty}$ and $\varepsilon>0$ there is an open set $\mathcal{U}$ that contains $Z(K,r)$ such that $\mu_n(\mathcal{U}) \leq \varepsilon$ for $n$ large enough. Let $\mathcal{U}$ and $R>r$ be as in Proposition \ref{frequencyofpassage}.

Suppose first that $\mathcal{L}$ is not contained in $\N'$. If $\mu_n(Z(\N',R))>0$ then $m_{\mathcal{L}}(Z(\N',R+t_n\|x\|))>0$ and thus by Lemma \ref{injectivitybis}, we get that $\mathcal{L} \subset \N'$, which is a contradiction. It follows that $\mu_n(Z(\N',R))=0$ and thus by Birkhoff's Theorem, one can find a point $q_n \in \M \sm Z(\N',R)$ such that
\begin{align*}
    \mu_n(\mathcal{U}) = \underset{T \to \infty}{\lim} \frac{1}{T} |
    \{s \in [0,T]  :  u_s q_n \in \mathcal{U} \} |. 
\end{align*}

Applying Proposition \ref{frequencyofpassage} to $L=\{q_n\}$ shows that for all $T$ large enough,
\begin{align*}
    \frac{1}{T} | \{s\in [0,T]  :  u_s q_n \in \mathcal{U} \} | < \varepsilon.
\end{align*}
This shows that $\mu_n(\mathcal{U}) \leq \varepsilon$.

Suppose now that $\mathcal{L}\subset \N'$. By definition of $\N$, we know that $x \notin Z_{\N'}$. It thus follows from Proposition \ref{injectivity} that the map
\begin{align*}
    \Phi: \N_{\infty}' \times \mathbb{R} \to \N, \ \ \ \ \ \Phi(q,x) \df \Rel_{t x}(q)
\end{align*} 
\noindent is injective. This means that for any $t_n$ large enough such that $t_n \|x\|>R$, the surface $q_n = \Rel_{t_n x}(q_n')$ does not belong to $Z(\N,R)$ and thus applying once more Proposition \ref{frequencyofpassage} to $L=\{q_n\}$ shows that $\mu_n(\mathcal{U})\leq \varepsilon$ for $n$ large enough.

To deduce now that $\nu = m_{\N}$, we use the ergodic decomposition argument which we used in Step 4 of the Proof of Theorem \ref{uniform}. 
\end{proof}

\begin{thm}\label{geodesicpush}
Suppose $\M$ satisfies the weak classification of $U$-invariant measures and let $\mu$ be a $U$-invariant  ergodic Radon probability measure on $\M$. Then there is an invariant subvariety $\N \subset \M$ such that  ${g_t}_{\ast}\mu$ converges to $m_{\N}$ as $t\to \infty$. If we furthermore assume that $\mu(\M_{\infty}) = 1$, then there is an invariant subvariety $\mathcal{L} \subset \N$ such that ${g_t}_{\ast}\mu$ converges to $m_{\mathcal{L}}$ as $t \to -\infty$. 
\end{thm}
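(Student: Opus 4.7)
By $U$-ergodicity and the $U$-invariance of $\M_\infty$, we have $\mu(\M_\infty)\in\{0,1\}$. The two cases are handled differently: the first reduces to Theorem \ref{pushbyrel}, while the second adapts its proof scheme.

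\emph{The saddle-connection-free case ($\mu(\M_\infty)=1$).} By Theorem \ref{classification}, $\mu = {\Rel_{x_0}}_\ast m_{\N_0}$ for some invariant subvariety $\N_0\subset\M$ and some $x_0\in Z_\M$. The $G$-invariance of $m_{\N_0}$ combined with the equivariance \eqref{eq: equivariance Rel} gives $g_{t\ast}\mu = {\Rel_{e^t x_0}}_\ast m_{\N_0}$. For $t\to+\infty$, Theorem \ref{pushbyrel} applied with $\mathcal{L}=\N_0$ yields $g_{t\ast}\mu\to m_\N$ for some invariant subvariety $\N\supset\N_0$. For $t\to-\infty$, $e^t x_0\to 0$; Lemma \ref{speed} ensures $\Rel_{s x_0}$ is defined $m_{\N_0}$-a.e. for small $|s|$, and continuity of $\Rel$ at $0$ then gives ${\Rel_{e^t x_0}}_\ast m_{\N_0}\to m_{\N_0}$, so $\mathcal{L}=\N_0\subset\N$.

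\emph{The remaining case: $\mu(\M_\infty)=0$ and $t\to+\infty$.} The length of the shortest horizontal saddle connection is $U$-invariant and $\R_{>0}$-valued, so $U$-ergodicity forces it to be $\mu$-a.e. equal to some $\ell_0>0$; hence $\mathrm{supp}(\mu)\subset\M_{\ell_0}$. Define $\N$ to be the smallest invariant subvariety containing $\mathrm{supp}(\mu)$ (which exists, as it equals $\overline{Gq}$ for $\mu$-a.e.\ $q$); by $G$-invariance of $\N$, $\mathrm{supp}(g_{t\ast}\mu)\subset\N$ for every $t$. Fix $t_n\to+\infty$ and let $\nu$ be a weak-$\ast$ subsequential limit of $g_{t_n\ast}\mu$. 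Following the scheme of the proof of Theorem \ref{pushbyrel}, the plan is to verify in turn that $\nu$ is a probability measure (by Theorem \ref{quantitative}(II) combined with Birkhoff's theorem applied to the $U$-invariant ergodic measure $g_{t\ast}\mu$, using $\mathrm{supp}(g_{t\ast}\mu)\subset\M_{e^t\ell_0}\subset\M_{\ell_0}$ for uniform tightness), that $\nu(\M\setminus\M_\infty)=0$ (by applying Proposition \ref{frequencyofpassagesaddleconnection} to a $g_{t_n\ast}\mu$-Birkhoff-generic point in $\M_R$, available once $e^{t_n}\ell_0>R$, and using the open-set direction of Portmanteau), and that $\nu(Z(\N',r))=0$ for every proper invariant subvariety $\N'\varsubsetneq\N$ and every $r>0$ (by using the equivariance $g_{-t}Z(\N',R)=Z(\N',e^{-t}R)$ and the fact that $\mu(\N')=0$, which follows from the minimality of $\N$ together with $U$-ergodicity applied to the $U$-invariant closed set $\N'$, to produce $g_{t_n\ast}\mu$-generic points in $\M_R\setminus Z(\N',R)$, and then invoking Proposition \ref{frequencyofpassage}). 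With these three properties, the ergodic-decomposition argument of Step 4 of the proof of Theorem \ref{uniform}, together with Theorem \ref{classification} and the $Z_\N$-invariance of $m_\N$, identifies every $U$-ergodic component of $\nu$ as $m_\N$, so $\nu=m_\N$. Since $\N$ depends only on $\mu$, every subsequential limit is the same measure, so the full sequence converges.

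The \textbf{main obstacle} is the singular-set estimate $\nu(Z(\N',r))=0$: the application of Proposition \ref{frequencyofpassage} requires a $g_{t_n\ast}\mu$-generic point outside $Z(\N',R)$ for the threshold $R$ that the proposition supplies, and this in turn requires showing $g_{t_n\ast}\mu(Z(\N',R))\to 0$. The crux is the identity $g_{t_n\ast}\mu(Z(\N',R))=\mu(Z(\N',e^{-t_n}R))$ together with $Z(\N',s)\downarrow\N'$ as $s\downarrow 0$ and $\mu(\N')=0$; the rescaling is in the correct direction for the argument precisely because we are in the regime $t_n\to+\infty$, which is consistent with the failure of the analogous statement for $t\to-\infty$ when $\mu$ has horizontal saddle connections.
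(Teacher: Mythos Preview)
Your argument is correct and follows essentially the same strategy as the paper's, with a slightly cleaner organization. In the saddle-connection-free case you invoke the weak classification hypothesis \emph{first} and reduce both directions to Theorem \ref{pushbyrel}; the paper instead runs the full three-step argument (tightness, no mass on $\M\setminus\M_\infty$, no mass on singular sets) uniformly for the $t\to+\infty$ assertion regardless of whether $\mu(\M_\infty)=0$ or $1$, and only afterward, for $t\to-\infty$, uses the structure $\mu=\Rel_{x\ast}m_{\mathcal{L}}$. Your shortcut is legitimate and eliminates a repetition of work already done in Theorem \ref{pushbyrel}. For the case $\mu(\M_\infty)=0$, your outline matches the paper's Steps 1--3 and general-case reduction almost verbatim; your ``main obstacle'' paragraph correctly isolates the key point, and your justification via $Z(\N',s)\downarrow\N'$ and $\mu(\N')=0$ is equivalent to the paper's observation (using $U$-ergodicity of $\mu$ on the $U$-invariant sets $Z(\N',r)$) that $\mu(Z(\N',r_0))=0$ for some $r_0>0$.

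One correction: where you write ``By Theorem \ref{classification}'' (twice) you must instead invoke the \emph{hypothesis} that $\M$ satisfies weak classification. Theorem \ref{classification} is the specific rank-one, Rel-dimension-one result and does not cover a general rank-one $\M$ satisfying the assumption of Theorem \ref{geodesicpush}.
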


\textbf{Remark}. If $\mu$ is an ergodic $U$-invariant measure with $\mu(\M_{\infty})<1$ then by ergodicity $\mu$-a.e $q \in \M$ has a horizontal saddle connection and ${g_t}_{\ast}\mu$ converges to the zero measure as $t \to -\infty$.

\begin{proof}[Proof of Theorem \ref{geodesicpush}]
We first prove the first assertion. We assume first that for any invariant subvariety $\N \varsubsetneq \M$, we have $\mu(\N)=0$. In that case, we are going to show that ${g_t}_{\ast}\mu$ converges to $m_{\M}$ as $t \to \infty$. Let $\mu_n = {g_{t_n}}_{\ast}\mu$ be a convergent subsequence when $t \to \infty$ and let $\nu$ be its limit. If generic surfaces for $\mu$ have some horizontal saddle connections, let $r$ be the length of the shortest of them. Otherwise, let $r=1$. 

\textbf{Step 1.} We show that $\nu(\M)=1$. Let $\varepsilon >0$. It is enough to show that there is a compact $K$ such that $\mu_n(K)\geq1-\varepsilon$. Let $K$ be a compact
as in item (II) of Theorem \ref{quantitative} with $r, \varepsilon$ as above. The measure $\mu_n$ assigns measure zero  to  surfaces with a saddle connection shorter than $r$ and thus there is a surface $q_n$ that is Birkhoff generic for the indicator function $\mathbf{1}_K$ with respect to $\mu_n$. By  definition of $K$, for any $T$ large enough,
\begin{align*}
    \frac{1}{T} |\{s \in [0,T] : u_s  q_n \in K \}| \geq1-\varepsilon.
\end{align*}
By Birkhoff genericity, the quantity on the left-hand side converges to $\mu_n(K)$ as $T \to  \infty$ and thus $\mu_n(K)\geq1-\varepsilon$. 

\textbf{Step 2.} We now show that $\nu(\M\sm \M_{\infty})=0$. For any $i>0$, let $K_i$ be the closure of the set  of surfaces whose saddle connection lengths are bounded below by $\frac{1}{i}$,  and which have at least one horizontal saddle connection, with the lengths of all horizontal saddle connections bounded above by $i$. By construction the $(K_i)_{i>0}$ exhaust $\M\sm\M_{\infty}$ and thus it is enough to show that for any $i>0$ and any $\varepsilon>0$, there is an open set $\mathcal{U}$ that contains $K_i$ and such that $\mu_n(\mathcal{U}) \leq \varepsilon$ for any $n$ large enough. Once again, let $K$ be as in item (II) in Theorem \ref{quantitative} for $r$ as above. Since the length of saddle connections is bounded below on compact sets and $K$ is compact, there is a $\tau>0$ such that $g_{\tau}(K_i) \subset K^c$, so that

$$
K_i \subset \mathcal{U} \df g_{\tau}(K^c).
$$ 

Let $n$ be large enough so that $t_n>\tau$. We have $\mu_n(\M_{re^{t_n}})=0$ and thus there is a surface $q_n$ without horizontal saddle connections shorter than $re^{t_n}$ that is Birkhoff generic for $\mathbf{1}_{K^{c}}$ with respect to $\mu_n$. The surfaces $q_n' \df  g_{-\tau} q_n$ do not have horizontal saddle connections shorter that $r$ and thus there is $T_0>0$ such that for any $T>T_0$,
\begin{align*}
    \frac{1}{T} |\{s \in [0,T]  : u_s  q_n' \notin K \}| < \varepsilon. 
\end{align*}

Now let $T_1$ be large enough so that $e^{-2t}T_1 > T_0$. For any $T>T_1$ we have
\begin{align*}
    &  \frac{1}{T} \int_0^T  \mathbf{1}_{\mathcal{U}}(u_s  q_n) \, ds = \frac{1}{T} \int_0^T \mathbf{1}_{K^c}(g_{-\tau} u_s q_n) \ ds \\ 
    =& \frac{1}{Te^{-2\tau}} \left|\left\{s \in \left[0,Te^{-2\tau} \right] : u_s  q_n' \notin K \right\} \right| < \varepsilon.
\end{align*}

By Birkhoff genericity the quantity on the left-hand side converges to $\mu_n(\mathcal{U})$  as $T \to \infty$ and thus $\mu_n(\mathcal{U})\leq \varepsilon$. 

\textbf{Step 3.} We now show that for any invariant subvariety $\N \varsubsetneq \M$ and $r>0$ we have $\nu(Z(\N,r))=0$. It is enough to show that for any compact subset $K$ of $\N_{\infty}$ and $\varepsilon>0$, there is a open set $\mathcal{U}$ that contains $Z(K,r)$ and such that $\mu_n(\mathcal{U})<\varepsilon$ for any $n$ large enough. We claim that there is $r_0>0$ such that $\mu(Z(\N,r_0))=0$. Indeed, suppose that for any $n>1$, we have $\mu(Z(\N,\frac{1}{n}))>0$. Since $\mu$ is ergodic and the sets $Z(\N, r)$ are $U$-invariant, we actually have $\mu \left(Z(\N,\frac{1}{n}) \right)=1$ and thus $\N_\infty = \bigcap_{n>1}Z(\N,\frac{1}{n})$ also has full measure, contradicting the assumption on $\mu$ made at the beginning of the proof. This proves the claim.

According to Proposition \ref{frequencyofpassage}, there is  $R>r$ and a neighborhood $\mathcal{U}$ such that for any compact subset $L$ of $\M \sm Z(\N,R)$, there is $T_0> 0$ such that for any $T>T_0$, 
\begin{align*}
    \frac{1}{T} \ |\{s \in [0,T]  : u_s q \in \mathcal{U} \}| < \varepsilon     .
\end{align*}

Let $n$ be large enough so that $e^{-t_n}R<r_0$ and let $q_n$ be a generic point for the measure $\mu_n$. If $q_n \in Z(\N,R)$ then the measure $\mu$ is supported on $Z(\N,e^{-t_n} R)$ and thus $\mu(Z(\N,r_0))=1$, a contradiction. Thus we can apply Proposition \ref{frequencyofpassage} to $L=\{q_n\}$ to find $T_0$ such that for any $T>T_0$, we have
\begin{align*}
    \frac{1}{T} \ |\{s \in [0,T]  : u_s  q_n \in \mathcal{U} \}| < \varepsilon  .    
\end{align*}

By the Birkhoff Ergodic Theorem, the left-hand side converges to $\mu_n(\mathcal{U})$ when $T$ tends to $+\infty$ and thus $\mu_n(\mathcal{U})<\varepsilon$ for any $n$ large enough. We prove as in the final step of the proof of Theorem \ref{uniform} that $\nu = m_{\M}$.  

\textbf{General case}. Suppose now $\mu$ is any $U$-invariant ergodic  Radon probability measure and consider the collection $\mathcal{C}(\mu)$ of all the invariant subvarieties that have
positive measure for $\mu$. Then in fact, by ergodicity, $\mu(\N)=1$ for any $\N \in \mathcal{C}(\mu)$. Let $\N_0 \in \mathcal{C}(\mu)$ of smallest dimension. We claim that there is no $\N_1 \in \mathcal{C}(\mu)$ which is properly contained in $\N_0$. Indeed, if $\N_1 \in \mathcal{C}(\mu)$ with $\N_1 \subset \N_0$ then $\dim \N_1 =
\dim \N_0$ by choice of $\N_0$, so that $\N_1$ is open in $\N_0$. This implies that $m_{\N_0}(\N_1)>0$,  which implies by ergodicity and $G$-invariance that  $m_{\N_0} = m_{\N_1}$ and hence $\N_0 = \N_1$. Thus we can now apply the first part of the proof to $\N_0$ in place of $\M$, which proves that ${g_t}_{\ast}\mu$ converges to $m_{\N_0}$.   

\medskip

We now prove the second assertion of the statement. We assume that $\mu(\M_{\infty})=1$. Since $\M$ satisfies the weak classification of $U$-invariant measures, it follows that there is an invariant subvariety $\mathcal{L} \subset \M$ and $x \in Z_{\M}$ such that $\mu = {\Rel_{x}}_{\ast}m_{\mathcal{L}}$. Notice that for any $t \in \R$, we have ${g_t}_{\ast}\mu = {\Rel_{e^tx}}_{\ast}m_{\mathcal{L}}$. It follows from Theorem \ref{pushbyrel} and the first part of the proof that $\mathcal{L} \subset \N$. Finally, it is clear that  ${g_t}_{\ast}\mu$ converges to $m_{\mathcal{L}}$ when $t \to -\infty$. 
\end{proof}

\begin{thm}\label{thm: circle averages}
Suppose $\M$ satisfies the weak classification of horocycle invariant measures. Let $q \in \M$, $\N \df \overline{Gq} \subset \M$, 
\begin{align*}
    \mu_q \df \frac{1}{2\pi} \int_0^{2\pi}  {r_{\theta}}_{\ast} \delta_q \ d\theta,
\end{align*}
and let $\mu_{t,q}$ denote the `pushed circle average' ${g_t}_{\ast} \mu$. Then $\mu_{t,q} \underset{t \to \infty}{\longrightarrow} m_{\N}.$
\end{thm}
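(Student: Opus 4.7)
The plan is to adapt the template used for Theorems~\ref{pushbyrel} and~\ref{geodesicpush}. Let $\nu$ be an arbitrary weak-$\ast$ subsequential limit of $(\mu_{t_n,q})_{n}$ with $t_n \to \infty$; since each $\mu_{t,q}$ is supported on $Gq \subset \N$, so is $\nu$. The goal is to prove $\nu = m_\N$, after which the arbitrariness of the subsequence gives the full convergence. I need to verify that $\nu$ is (a) a probability measure, (b) zero on $\N \smallsetminus \N_\infty$, (c) zero on $\mathcal{S}_\N$, and (d) $U$-invariant, and then conclude by ergodic decomposition together with the weak classification.

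For (a) and (b) I would use the circle non-divergence statement alluded to before Proposition~\ref{prop: push of circles}, namely the extension of \cite[Prop.~12.5]{bainbridge2016horocycle} to arbitrary rank-one invariant subvarieties. Claim (a) follows directly: for any $\varepsilon>0$ there is a compact $K \subset \N$ with $\mu_{t_n,q}(K)>1-\varepsilon$ for $n$ large. For (b), exhaust $\N \smallsetminus \N_\infty$ by the compact sets $K_i$ of surfaces whose shortest saddle connection has length at least $1/i$ and whose horizontal saddle connections all have length at most $i$, and as in Step~2 of the proofs of Theorems~\ref{preequidistribution} and~\ref{geodesicpush} use that $g_{-\tau}(K_i) \subset K^c$ for a suitable $\tau$ and $K$ to cover each $K_i$ by an open set of vanishing $\mu_{t,q}$-mass. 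For (c), Proposition~\ref{prop: push of circles} applies directly: for any proper invariant subvariety $\N' \subsetneq \N$ and compact $K' \subset \N'_\infty$, the minimality of $\N = \overline{Gq}$ forces $q \notin \N'$, and the proposition then furnishes a neighborhood of $Z(K',r)$ of vanishing $\mu_{t,q}$-mass.

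The crucial step is (d). The required Mautner-type input is the identity
\begin{equation*}
g_t r_\theta g_{-t} = \begin{pmatrix} \cos\theta & -e^{2t}\sin\theta \\ e^{-2t}\sin\theta & \cos\theta \end{pmatrix},
\end{equation*}
which for small $\theta$ with $e^{2t}\sin\theta$ moderate approximates $u_{-e^{2t}\sin\theta}$. I would cover $[0,2\pi]$ by finitely many arcs centered at directions $\theta_0$ of half-width $\delta_{t} \to 0$ chosen so that $e^{2t}\delta_{t} \to \infty$, and reparameterize each arc via $s = -e^{2t}\sin(\theta-\theta_0)$. The restriction of $\mu_{t_n,q}$ to the $g_{t_n}$-image of such an arc then becomes, up to vanishing error, a normalized horocycle-segment average of length of order $e^{2t_n}\delta_{t_n}$ based at $g_{t_n} r_{\theta_0} q$. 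Any fixed $u_{s_0}$-translation of such a long segment differs from the original by a set of length $2|s_0|$, so the discrepancy vanishes after normalization. Summing over the finite arc cover yields $\int (f \circ u_{s_0})\,d\nu = \int f\,d\nu$ for every $f \in C_c(\M)$ and every $s_0 \in \R$.

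With (a)--(d) in hand, decompose $\nu = \int \nu_p\, d\nu(p)$ into ergodic $U$-invariant components; for $\nu$-almost every $p$, the measure $\nu_p$ is a saddle-connection-free ergodic $U$-invariant probability measure supported in $\N$ with $\nu_p(\mathcal{S}_\N)=0$. By the weak classification applied to $\M$, $\nu_p = {\Rel_x}_\ast m_{\N''}$ for some invariant subvariety $\N'' \subset \M$ and $x \in Z_\M$; the support condition forces $\N'' \subset \N$ and $x \in Z_\N$, while $\nu_p(\mathcal{S}_\N)=0$ forces $\N'' = \N$, and finally Proposition~\ref{prop: from SSWY} applied to $\N$ gives $\Rel_{x\ast}m_\N = m_\N$. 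Hence $\nu_p = m_\N$ for $\nu$-a.e.\ $p$ and consequently $\nu = m_\N$. The main obstacle is step (d): establishing the Mautner-type approximation uniformly on the compact set provided by (a), and ensuring that the boundary errors from the arc subdivision are controlled by the non-divergence estimate. The other verifications closely parallel arguments already used earlier in the paper.
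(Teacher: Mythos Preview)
Your approach is essentially the paper's: take a subsequential limit, check it is a probability measure giving zero mass to $\M\sm\M_\infty$ and to $\mathcal{S}_\N$, then conclude via ergodic decomposition and the weak classification. Two small points of divergence: for non-divergence of circle averages (your (a) and (b)) the paper cites \cite[Thm.~5.2]{eskin2001asymptotic} rather than the BSW extension, and the paper simply asserts $U$-invariance of the limit as standard, whereas you prove it explicitly; also, your final appeal to Proposition~\ref{prop: from SSWY} is unnecessary---once $\N''=\N$ and $x\in Z_\N$, the invariance $\Rel_{x\ast}m_\N=m_\N$ is immediate from the construction of $m_\N$, and the paper sidesteps the ``support condition forces $\N''\subset\N$ and $x\in Z_\N$'' issue by replacing $\M$ with $\overline{Gq}$ at the outset.
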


\begin{proof}
Up to replacing $\M$ by $\overline{Gq},$
we may assume that $q$ is not contained in any invariant subvariety $\N \subsetneq \M$. Let $\nu$ be a weak-$\ast$ limit of a convergent subsequence $(\mu_{t_n,q})_{n>0}$. We want to show that $\nu = m_{\M}$.

\textbf{Step 1.} We want to show that $\nu(\M)=1$. It is enough to show that for any $\varepsilon>0$, there is a compact $K \subset \M$ such that $\nu(K)>1-\varepsilon$. This follows from \cite[Theorem 5.2]{eskin2001asymptotic}. 

\medskip

\textbf{Step 2.} 
We now want to show that $\nu(\M\sm\M_{\infty})=0$. By construction, the $(L_i)_{i>0}$ exhaust $\M\sm\M_{\infty}$ and thus it is enough to show that for any $\varepsilon>0$ and $i>0$, there is a open set $\mathcal{U}$ that contains $L_i$ and such that $\mu_{t,q}(\mathcal{U})<\varepsilon$ for any $t$ large enough. It follows once again from \cite[Theorem 5.2]{eskin2001asymptotic} that there are $t_1>0$ and $\delta>0$ such that for any $t > t_1$, we have $\mu_{t,q}(C_{\delta})<\varepsilon$, where $C_{\delta}$ is the open set of surfaces that have a saddle connection strictly shorter than $\delta$. Let $t_2 = \log(\frac{\delta}{2}) - \log(i)$ and notice that  $g_{t_2} L_i \subset C(\delta)$. Let $\mathcal{U} = g_{-t_2}(C(\delta))$. For any $t>t_1 - t_2$, we have
\begin{align*}
    \mu_{t,q}(\mathcal{U})&= \mu_{t+t_2,q}(C(\delta)) < \varepsilon.
\end{align*}

\textbf{Step 3.} We now want to show that for any invariant subvariety $\N \varsubsetneq \M$ and any $r>0$, we have $\nu(Z(\N,r))$ = 0. It is enough to
show that for any compact subset $K$ of $\N_{\infty}$ and
$\varepsilon>0$, there is a open set $\mathcal{U}$ that contains
$Z(K,r)$ and such that $\mu_{t,q}(\mathcal{U})<\varepsilon$ for any $t$
large enough. This is exactly Proposition \ref{prop: push of circles}.

\medskip

It follows that $\nu$ is a $U$-invariant probability measure that assigns measure zero to the set of surfaces with horizontal saddle connections as well as to any Rel push of any invariant subvariety $\N \subsetneq \M$. It follows by ergodic decomposition, and using the weak classification of $U$-invariant measures, that $\nu = \mu_{\M}$. 
\end{proof}

By a well-known argument developed in  \cite{eskin2001asymptotic}, this implies 

\begin{thm} 
Suppose $\M$ satisfies the weak classification of horocycle invariant measures. Let $q \in \M$ and for any $L>0$, denote by $N_L(q)$ the number of saddle connection of length at most $L$ on $q$. Then there is a constant $C>0$ such that 
\begin{align*}
    \frac{1}{L^2} N_L(q) \underset{L \to \infty}{\longrightarrow} C.
\end{align*}
\end{thm}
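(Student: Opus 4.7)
The plan is to invoke the Eskin-Masur machinery of \cite{eskin2001asymptotic}, which converts equidistribution of large circle averages in moduli space into quadratic asymptotics for saddle-connection counts via the Siegel-Veech transform. For a bounded compactly supported $f \colon \R^2 \to \R$, set
\[
\hat f(q) \df \sum_{\delta} f(\hol_q(\delta)),
\]
with the sum ranging over the saddle connections $\delta$ of $q$. The Siegel-Veech formula of \cite[Thm.\ 5.1]{eskin2001asymptotic} asserts that $\hat f \in L^1(\N, m_\N)$ and $\int \hat f \, dm_\N = c(\N) \int_{\R^2} f \, d\lambda$ for a constant $c(\N)>0$ depending only on $\N$ (the Siegel-Veech constant).

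The key computation, carried out explicitly in \cite{eskin2001asymptotic}, is that for a suitable family of continuous compactly supported test functions $f$ (sandwiching smooth approximations to the indicator of an annulus $\{1 \le |v| \le 1+\eta\}$), the quantity
\[
\frac{1}{2\pi}\int_0^{2\pi}\hat f\bigl(g_t r_\theta q\bigr)\, d\theta \;=\; \frac{1}{2\pi}\sum_\delta \int_0^{2\pi} f\bigl(g_t r_\theta \hol_q(\delta)\bigr)\, d\theta
\]
is asymptotic, as $L = e^t \to \infty$, to $\kappa(f) \cdot L^{-2}N_L(q)$ for an explicit positive constant $\kappa(f)$. This follows from the stretching of the circle $\{r_\theta v : \theta \in [0,2\pi]\}$ by $g_t$ into a long thin ellipse, together with an explicit change of variables computing its intersection with $\mathrm{supp}\, f$.

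Applying Theorem \ref{thm: circle averages} to $\hat f$ would then give convergence of $L^{-2} N_L(q)$ to $c(\N)\int f\, d\lambda / \kappa(f)$. The main obstacle is that $\hat f$ is neither continuous nor bounded on $\N$, since it blows up on surfaces with many short saddle connections, while Theorem \ref{thm: circle averages} applies only to continuous compactly supported test functions. Following Eskin-Masur, I would truncate $\hat f$ to a continuous compactly supported function $\hat f_R$ agreeing with $\hat f$ on the compact set of surfaces whose shortest saddle connection has length at least $1/R$, and apply Theorem \ref{thm: circle averages} to $\hat f_R$ to obtain $\mu_{t,q}(\hat f_R)\to m_\N(\hat f_R)$. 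The upgrade to $\mu_{t,q}(\hat f)\to m_\N(\hat f)$ relies on the uniform $L^1$ bound for Siegel-Veech transforms of \cite[Thm.\ 5.2]{eskin2001asymptotic}, which is the estimate already invoked in Steps~1--2 of the proof of Theorem \ref{thm: circle averages} to rule out escape into the cusp; this is the main technical input. Letting $R \to \infty$ removes the truncation error and yields the desired limit $N_L(q)/L^2 \to C$ with $C = c(\N)\int f\, d\lambda / \kappa(f) > 0$, independent of $q$.
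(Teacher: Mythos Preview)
Your proposal is correct and follows precisely the approach the paper takes: the paper does not give a proof at all but simply states that the result follows ``by a well-known argument developed in \cite{eskin2001asymptotic}'' from Theorem~\ref{thm: circle averages}. Your sketch of the Eskin--Masur machinery (Siegel--Veech transform, truncation, and the cusp estimate from \cite[Thm.~5.2]{eskin2001asymptotic}) is exactly what that well-known argument consists of, so you have supplied the details the paper omits.
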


\bibliographystyle{alpha}
\bibliography{Rel_weak_classification.bib}

\end{document}